\newtheorem{theorem}{Theorem}[section]
\newtheorem{proposition}[theorem]{Proposition}
\newtheorem{lemma}[theorem]{Lemma}
\newtheorem{corollary}[theorem]{Corollary}
\theoremstyle{definition}
\newtheorem{example}[theorem]{Example}
\newtheorem{definition}[theorem]{Definition}
\newtheorem{problem}[theorem]{Problem}
\newcommand{\bigzero}{\mbox{\normalfont\Large\bfseries 0}}
\begin{document}

\author[P. Danchev]{Peter Danchev}
\address{Institute of Mathematics and Informatics, Bulgarian Academy of Sciences, 1113 Sofia, Bulgaria}
\email{danchev@math.bas.bg; pvdanchev@yahoo.com}
\author[O. Hasanzadeh]{Omid Hasanzadeh}
\address{Department of Mathematics, Tarbiat Modares University, 14115-111 Tehran Jalal AleAhmad Nasr, Iran}
\email{hasanzadeomiid@gmail.com}
\author[A. Javan]{Arash Javan}
\address{Department of Mathematics, Tarbiat Modares University, 14115-111 Tehran Jalal AleAhmad Nasr, Iran}
\email{a.darajavan@gmail.com}
\author[A. Moussavi]{Ahmad Moussavi}
\address{Department of Mathematics, Tarbiat Modares University, 14115-111 Tehran Jalal AleAhmad Nasr, Iran}
\email{moussavi.a@modares.ac.ir; moussavi.a@gmail.com}

\title[Rings whose non-invertible elements are strongly nil-clean]{\small Rings whose non-invertible elements are \\ strongly nil-clean}
\keywords{nil-clean element (ring), uniquely nil-clean element (ring), strongly nil-clean element (ring), uniquely clean element (ring)}
\subjclass[2010]{16S34, 16U60}

\maketitle




\begin{abstract}
We consider in-depth and characterize in certain aspects those rings whose non-units are strongly nil-clean in the sense that they are a sum of commuting nilpotent and idempotent. In addition, we examine those rings in which the non-units are uniquely nil-clean in the sense that they are a sum of a nilpotent and an unique idempotent. In fact, we succeeded to prove that these two classes of rings can completely be characterized in terms of already well-studied and fully described sorts of rings.
\end{abstract}

\section{Introduction and Basic Concepts}

Everywhere in the current paper, let $R$ be an associative but {\it not} necessarily commutative ring having identity element, usually stated as $1$. Standardly, for such a ring $R$, the letters $U(R)$, $\rm{Nil}(R)$ and ${\rm Id}(R)$ are designed for the set of invertible elements (also termed as the unit group of $R$), the set of nilpotent elements and the set of idempotent elements in $R$, respectively. Likewise, $J(R)$ denotes the Jacobson radical of $R$, and ${\rm Z}(R)$ denotes the center of $R$. The ring of $n\times n$ matrices over $R$ and the ring of $n\times n$ upper triangular matrices over $R$ are denoted by ${\rm M}_{n}(R)$ and ${\rm T}_{n}(R)$, respectively. Standardly, a ring is said to be {\it abelian} if each of its idempotents is central, that is, ${\rm Id}(R)\subseteq {\rm Z}(R)$.

In order to present our achievements here, we now need the necessary background material as follows: Mimicking \cite{4}, an element $a$ from a ring $R$ is called {\it clean} if there exists $e\in {\rm Id}(R)$ such that $a-e\in U(R)$. Then, $R$ is said to be {\it clean} if each element of $R$ is clean. In addition, $a$ is called {\it strongly clean} provided $ae=ea$ and, if each element of $R$ are strongly clean, then $R$ is said to {\it strongly clean} too. On the other hand, imitating \cite{25}, $a\in R$ is called {\it uniquely clean} if there exists a unique $e \in {\rm Id}(R)$ such that $a-e \in U(R)$. In particular, a ring $R$ is said to be {\it uniquely clean} (or just {\it UC} for short) if every element in $R$ is uniquely clean. Generalizing these notions, in \cite{19} was defined an element $a \in R$ to be {\it uniquely strongly clean} if there exists a unique $e \in {\rm Id}(R)$ such that $a-e \in U(R)$ and $ae=ea$. In particular, a ring $R$ is {\it uniquely strongly clean} (or just {\it USC} for short) if each element in $R$ is uniquely strongly clean. A ring $R$ is generalized uniquely clean (or just GUC for short) if every non-invertible element of $R$ is uniquely clean, which was introduced in \cite{2}. A ring is called a {\it generalized uniquely strongly clean} ring (or just {\it GUSC} for short) if every non-invertible element is uniquely strongly clean. These rings are generalization of USC rings, which was introduced in \cite{20}.

Let $R$ be a ring. An element $r \in R$ is said to be {\it nil-clean} if there is an idempotent $e \in R$ and a nilpotent $b \in R$ such that $r=e+b$. Such an element $r$ is further called {\it strongly nil-clean} if the existing idempotent and nilpotent can be chosen such that $be=eb$. A ring is called {\it nil-clean} (respectively, {\it strongly nil-clean}) if each of its elements is nil-clean (respectively, strongly nil-clean). Nil-clean and strongly nil-clean rings was introduced by Diesl in \cite{21}. An element $a$ in a ring $R$ is called uniquely nil-clean (or just UNC for short) if there is a unique idempotent $e$ such that $a-e$ is nilpotent. We will say that a ring is uniquely nil-clean (or just UNC for short) if each of its elements is uniquely nil-clean. These rings were also introduced by Diesl in \cite{21}. A ring $R$ is called a $UU$ ring if $U(R) = 1 + Nil(R)$, which was introduced by Calugareanu \cite{16} and studied in more details by Danchev and Lam in \cite{17}. Diesl in \cite{21} proved that a unit $u$ of $ R $ is strangly nil-clean if and only if $ u\in 1+Nil(R) $. In particular, $R$ is a $UU$ ring if and only if every unit of $ R $ is strongly nil-clean. It is clear that the $UU$ rings are generalization of strongly nil-clean rings. Also, Karimi-Mansoub et al in \cite{8} proved that a ring $R$ is a $UU$ if and only if every unit of $R$ is uniquely nil-clean. It is also clear that $UU$ rings are  generalization of uniquely nil-clean rings. So, this idea comes to mind that what can be said about rings whose non-invertible elements are strongly nil-clean and rings whose non-invertible elements are uniquely nil-clean. Also we know that $UU$ ring need not be strongly clean. Thus, a natural problem is what generalizations of strongly nil-clean and uniquely nil-clean rings can be found that are strongly clean. In this paper, we introduce two families of rings. The first one is a generalization of uniquely nil-clean rings which is a subclass of strongly clean rings and, the second one is a generalization of strongly nil-clean rings which are strongly $\pi$-regular and strongly clean. These families include rings in which each non-invertible element is uniquely nil-clean (or just GUNC for short) and rings in which every non-invertible element is strongly nil-clean (or just GSNC for short). Various extensions of these rings will be studied.

We are now planning to give a brief program of our results established in the sequel: In the next second section, we establish some fundamental characterization properties of GUNC rings -- for instance, we succeeded to establish a valuable necessary and sufficient condition, which totally classifies any ring to be GUNC (see Theorem~\ref{.}). In the subsequent third section, we explore GUNC group rings and obtain a good criterion for a group ring of a locally finite $p$-group, with $p$ a prime, over an arbitrary ring to be GUNC. In the next fourth section, we give a comprehensive investigation of GSNC rings and characterize them in several ways (see, e.g., Theorems~\ref{thm3.9}, \ref{thm3.12}, \ref{thm3.33} and \ref{thm3.35}, respectively). Our fifth section is devoted to the examination in-depth of GSNC group rings and we receive some satisfactory characterization of their structure. We finish our study in the sixth section with two intriguing left-open questions that are of some interest and importance.

\section{GUNC rings}

We start here with the following key notion.

\begin{definition}\label{def2.1}
A ring $R$ is called {\it generalized uniquely nil-clean} (or just {\it GUNC} for short) if every non-invertible element in $R$ is uniquely nil-clean.	
\end{definition}

We now have the following diagram:

\begin{center}
\begin{tikzpicture}
\node[draw=cyan,fill=cyan!25,minimum width=2cm,minimum height=1cm,text width=1.75cm,align=center]  (a) {UNC};	 \node[draw=cyan,fill=cyan!25,minimum width=2cm,minimum height=1cm,text width=2.5cm,align=center,right=of a](b){GUNC};	 \node[draw=cyan,fill=cyan!25,minimum width=2cm,minimum height=1cm,text width=1.75cm,align=center,below=of b](c){GUC};	 \node[draw=cyan,fill=cyan!25,minimum width=2cm,minimum height=1cm,text width=1.75cm,align=center,right=of b](d){Strongly clean};
\node[draw=cyan,fill=cyan!25,minimum width=2cm,minimum height=1cm,text width=1.75cm,align=center,below=of a](f){UC};
\draw[-stealth] (a) -- (b);
\draw[-stealth] (b) -- (c);
\draw[-stealth] (a) -- (f);
\draw[-stealth] (f) -- (c);
\draw[-stealth] (b) -- (d);
\draw[-stealth] (c) -- (d);
\end{tikzpicture}
\end{center}

The next example gives us the opportunity to discover the complicate structure of these rings.

\begin{example}\label{exa2.2}
(i) Any UNC ring is GUNC, but the converse is {\it not} true in general. In fact, a simple check shows that the ring $\mathbb{Z}_{3}$ is GUNC that is {\it not} UNC.

\medskip

(ii) Any UNC ring is UC, but the converse is {\it not} true in general. In fact, a plain verification shows that the ring $\mathbb{Z}_{4}[[x]]$ is UC that is {\it not} UNC.

\medskip

(iii) Any UC ring is GUC, but the converse is {\it not} true in general. Indeed, an easy inspection shows that the ring $\mathbb{Z}_{5}$ is GUC that is {\it not} UC.

\medskip

(iv) Any GUC ring is strongly clean, but the converse is {\it not} true in general. Indeed, a quick trick shows that the ring ${\rm M}_{2}(R)$, where $R=\left\lbrace  \left( \begin{array}{cc}
	a & b \\
	0 & a
\end{array}\right)\Big| a\in\mathbb{Z}_{2},\, b\in\mathbb{Z}_{(2)}[x]
\right\rbrace$, is strongly clean that is {\it not} GUC.

This substantiates our argumentation.
\end{example}	

WE now need a series of preliminary technicalities.

\begin{lemma}\label{lem2.3}
Let $R$ be a ring and $a\in R$. Then, $a$ is UNC if, and only if, $1-a$ is UNC.
\end{lemma}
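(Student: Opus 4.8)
The plan is to exploit the involution on idempotents given by complementation, $e \mapsto 1-e$, and to show that it carries the nil-clean witnesses for $a$ bijectively onto those for $1-a$; uniqueness then transfers for free. Recall that, by definition, $a$ is UNC exactly when there is a \emph{unique} $e \in {\rm Id}(R)$ with $a - e \in {\rm Nil}(R)$.

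First I would record the two elementary facts that drive the argument. For $e \in R$ one has $e \in {\rm Id}(R)$ if and only if $1 - e \in {\rm Id}(R)$, since $(1-e)^2 = 1 - 2e + e^2$ equals $1-e$ precisely when $e^2 = e$. Likewise, for any $x \in R$ one has $x \in {\rm Nil}(R)$ if and only if $-x \in {\rm Nil}(R)$, because $(-x)^n = (-1)^n x^n$.

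Next comes the one computation that matters: for an idempotent $e$ with complement $1-e$,
\[
(1-a) - (1-e) = e - a = -(a-e).
\]
By the negation fact this shows $a - e \in {\rm Nil}(R)$ if and only if $(1-a) - (1-e) \in {\rm Nil}(R)$, and by the complementation fact $1-e$ is an idempotent exactly when $e$ is. Hence $e \mapsto 1-e$ restricts to a bijection (in fact an involution) between the solution sets
\[
S_a := \{\, e \in {\rm Id}(R) : a - e \in {\rm Nil}(R) \,\}
\quad\text{and}\quad
S_{1-a} := \{\, f \in {\rm Id}(R) : (1-a) - f \in {\rm Nil}(R) \,\}.
\]
Since $a$ is UNC iff $|S_a| = 1$ and $1-a$ is UNC iff $|S_{1-a}| = 1$, and the map above forces $|S_a| = |S_{1-a}|$, the equivalence follows at once; the situation is symmetric in $a$ and $1-a$ because the involution is its own inverse, so one direction does both.

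I expect no real obstacle: the entire content is the sign bookkeeping in the identity $(1-a)-(1-e) = -(a-e)$ together with the fact that negation and complementation each preserve the defining conditions. The only point deserving a moment's care is confirming that the correspondence is a genuine bijection of the two solution sets, i.e. that it is well-defined in both directions, which is precisely what the two elementary facts and the displayed identity guarantee.
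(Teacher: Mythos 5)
Your proof is correct and is essentially the same argument as the paper's: both rest on the complementation map $e \mapsto 1-e$ together with the identity $(1-a)-(1-e) = -(a-e)$, which is exactly how the paper transfers a decomposition $a = e+q$ to $1-a = (1-e)+(-q)$ and then uses uniqueness. Your packaging of this as an involution inducing a bijection between the two solution sets is just a cleaner way of stating the same idea, with the added benefit of making the symmetry of the two directions explicit.
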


\begin{proof}
Given $a\in R$ is UNC. Then, there exists $e^{2}=e\in R$ and  $q\in {\rm Nil}(R)$ such that $a=e+q$. Hence, $1-a=(1-e)+(-q)$. Suppose $1-a=f+q^{\prime}$, where $f^{2}=f\in R$ and $q^{\prime}\in {\rm Nil}(R)$. Thus,
$a=(1-f)+(-q^{\prime})$. Since $a$ is UNC, one sees that $1-f=e$ whence $1-e=f$, so $1-a$ is too UNC. The converse is similar, so we omit the details.
\end{proof}

\begin{lemma}\label{lem2.4}
Let $R$ be a GUNC ring. Then, $R$ is abelian.
\end{lemma}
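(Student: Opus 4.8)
The plan is to prove that every idempotent of $R$ commutes with every element, which is the standard reformulation of abelianness. I would fix an idempotent $e\in R$ and an arbitrary $r\in R$, and aim to establish the two ``corner'' identities $er(1-e)=0$ and $(1-e)re=0$; together these give $er=ere=re$, so $e$ is central.

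The key construction is to build a nearby idempotent and exploit uniqueness. Setting $x=er(1-e)$, I note that $x^{2}=0$ (so $x$ is nilpotent) and that $f:=e+x$ is idempotent, since $ex=x$ and $xe=0$ force $(e+x)^{2}=e+x$. Now I apply the hypothesis to the element $e$ itself. Provided $e$ is non-invertible, it is UNC by the GUNC assumption, so there is a \emph{unique} idempotent $g$ with $e-g$ nilpotent. But $e$ admits two such witnesses, namely $e-e=0$ and $e-f=-x$, both nilpotent. Uniqueness of the idempotent part forces $f=e$, that is $x=er(1-e)=0$. Running the symmetric argument with $y=(1-e)re$ and the idempotent $e+y$ yields $(1-e)re=0$.

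To cover the excluded case I observe that an idempotent is invertible precisely when it equals $1$: from $e^{2}=e$ one multiplies by $e^{-1}$ to get $e=1$. Hence every idempotent $e\neq 1$ is non-invertible, so the argument above applies verbatim, while $e=1$ (and $e=0$) are central trivially, giving $er(1-e)=(1-e)re=0$ by inspection. Combining the two corner identities gives $er=re$ for all $r$, so ${\rm Id}(R)\subseteq {\rm Z}(R)$ and $R$ is abelian.

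I expect the only delicate point to be the legitimacy of applying the UNC property to $e$: one must confirm that a nontrivial idempotent is genuinely non-invertible so that the GUNC hypothesis is actually in force, and one must be careful that the two nil-clean decompositions of $e$ share the same underlying element $e$ yet differ in their idempotent parts, which is exactly what uniqueness of the idempotent forbids. Notably, no commutativity of the nilpotent part is needed, so Lemma~\ref{lem2.3} is not strictly required here, although it would supply an alternative route by passing between $e$ and $1-e$.
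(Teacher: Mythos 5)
Your proof is correct. It has the same outer skeleton as the paper's proof (split on whether the idempotent $e$ is invertible; note that an invertible idempotent equals $1$; otherwise invoke the GUNC hypothesis to make $e$ itself a UNC element), but it diverges at the key step: the paper simply cites an external result (Corollary 2.4 of the Cui--Danchev--Jin paper) asserting that a UNC idempotent is central, whereas you prove that fact from scratch via the corner-element perturbation $x=er(1-e)$, checking $x^2=0$, $ex=x$, $xe=0$, so that $f=e+x$ is a second idempotent witnessing a nil-clean decomposition of $e$; uniqueness then kills $x$, and the symmetric computation with $y=(1-e)re$ kills the other corner. All the computations check out ($f^2=e+ex+xe+x^2=e+x$, and likewise $(e+y)^2=e+y$), and your observation that no commutation condition on the nilpotent part is needed is accurate, since UNC as defined only demands uniqueness of the idempotent. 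What the paper's route buys is brevity; what yours buys is a self-contained argument that makes the lemma independent of reference \cite{7} and exposes the actual mechanism (uniqueness of the idempotent part rigidifies the Peirce corners), which is the standard trick underlying such abelianness results.
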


\begin{proof}
Given $e\in Id(R)$. If $e\in U(R)$, it must be that $e=1$ and hence $e$ is central. If, however, $e\notin U(R)$,
it follows that $e$ is UNC and, therefore, we find that $e$ is central by \cite[Corollary 2.4]{7}, as required.
\end{proof}

\begin{lemma}\label{lem2.5}
If the direct product $\prod_{i=1}^{n}R_{i}$ is a GUNC ring, then each direct component $R_i$ is a GUNC ring.
\end{lemma}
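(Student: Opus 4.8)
The plan is to transfer the UNC property one coordinate at a time by a padding trick. Fix an index $j\in\{1,\dots,n\}$ and a non-invertible element $a_j\in R_j$; I want to show that $a_j$ is UNC in $R_j$. First I would embed $a_j$ into the product by setting $\alpha=(1,\dots,1,a_j,1,\dots,1)$, with $a_j$ placed in the $j$-th slot and $1$ in every other slot. Since a tuple in $\prod_{i=1}^{n}R_i$ is a unit exactly when each of its coordinates is a unit, and $a_j\notin U(R_j)$, the element $\alpha$ is non-invertible. Hence the GUNC hypothesis on $\prod_{i=1}^{n}R_i$ applies and $\alpha$ is UNC.

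The key observation I would use repeatedly is that the nilpotents and the idempotents of a finite direct product are precisely the coordinate-wise nilpotents and idempotents. So, writing the unique idempotent that witnesses that $\alpha$ is UNC as $E=(e_1,\dots,e_n)$ with $\alpha-E\in{\rm Nil}\big(\prod_{i=1}^{n}R_i\big)$, each $e_i$ is an idempotent and each coordinate of $\alpha-E$ is nilpotent. Reading off the $j$-th coordinate gives $a_j-e_j\in{\rm Nil}(R_j)$, so $a_j=e_j+(a_j-e_j)$ is a nil-clean decomposition in $R_j$; this establishes existence.

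For uniqueness, suppose $f\in{\rm Id}(R_j)$ also satisfies $a_j-f\in{\rm Nil}(R_j)$. I would lift $f$ to $F=(1,\dots,1,f,1,\dots,1)$, which is an idempotent of the product, and note that $\alpha-F=(0,\dots,0,a_j-f,0,\dots,0)$ is nilpotent because its only nonzero coordinate $a_j-f$ is nilpotent. Thus $F$ is an idempotent with $\alpha-F$ nilpotent, so the uniqueness clause of $\alpha$ being UNC forces $F=E$; comparing $j$-th coordinates yields $f=e_j$. Hence $a_j$ is UNC, and since $a_j$ was an arbitrary non-invertible element of $R_j$, the component $R_j$ is GUNC.

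This argument is essentially bookkeeping, and I do not anticipate a genuine obstacle. The only point that needs care is choosing the padding so that simultaneously the embedded element $\alpha$ is non-invertible and the lifted idempotent $F$ agrees with $E$ in the auxiliary coordinates automatically; padding with $1$ accomplishes both at once, which lets the uniqueness of $E$ in the product transfer directly to the uniqueness of $e_j$ in $R_j$.
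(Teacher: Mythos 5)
Your proof is correct and follows essentially the same route as the paper's: pad the non-unit $a_j$ with $1$'s to get a non-invertible element of the product, invoke the GUNC hypothesis to make it UNC, and transfer existence and uniqueness of the idempotent back to the $j$-th coordinate. In fact, your write-up fills in the details (coordinate-wise description of units, idempotents and nilpotents, and the explicit lifting of a second idempotent $f$ to $F$) that the paper's proof only sketches via its contrapositive "two decompositions lift to two decompositions" remark.
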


\begin{proof}
Given $a\in R_{i}$, where $a\notin U(R_{i})$, so one sees that the vector $(1,\ldots,a,1,\ldots,1)$ is not a unit in
$\prod_{i=1}^{n}R_{i}$. However, as $\prod_{i=1}^{n}R_{i}$ is GUNC, the element $(1,\ldots,a,1,\ldots,1)$ is UNC,
so that $a$ is UNC as well; for otherwise, if $a$ has two different nil-clean decompositions, then $(1,\ldots,a,1,\ldots,1)$ will also have two different nil-clean decompositions, which is a contradiction, as pursued.
\end{proof}

It is worthy of noticing that the converse claim of Lemma \ref{lem2.5} is {\it not} true. For instance, the ring $\mathbb{Z}_{3}$ is obviously GUNC, but however the direct product $\mathbb{Z}_{3}\times\mathbb{Z}_{3}$ is {\it not} GUNC.

Nevertheless, we can offer the following.

\begin{proposition}\label{pro2.6}
Let $R_{i}$ be rings for all $1\leqslant i \leqslant n$. Then, the product $\prod_{i=1}^{n}R_{i}$ is a GUNC ring if, and only if, each direct factor $R_{i}$ is a UNC ring.
\end{proposition}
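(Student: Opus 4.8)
The plan is to prove the two directions separately, exploiting Lemma~\ref{lem2.5} for one implication and a direct verification for the other. For the forward direction, assume $\prod_{i=1}^{n}R_{i}$ is GUNC; I want to show each $R_i$ is in fact UNC (the stronger property), not merely GUNC. Lemma~\ref{lem2.5} already hands me that each $R_i$ is GUNC, so the only remaining task is to handle the \emph{units} of $R_i$: I must show every $u\in U(R_i)$ is also uniquely nil-clean. The key trick is to embed $u$ into a non-unit of the product. Concretely, for fixed $i$ and any $u\in U(R_i)$, consider the vector $v$ that has $u$ in the $i$-th slot and $0$ in some other slot (using $n\geq 2$), so that $v$ is non-invertible in $\prod_{i=1}^{n}R_{i}$. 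By the GUNC hypothesis $v$ is UNC, and since uniqueness of a nil-clean decomposition is inherited coordinatewise, $u$ inherits a unique nil-clean decomposition, making $R_i$ a UNC ring.

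First I would state the required embedding carefully. Given $u \in U(R_i)$, place it in coordinate $i$ and fill the remaining coordinates with the element $0$; the resulting tuple is a non-unit because a tuple is invertible in a finite product exactly when each component is invertible, and $0$ is not a unit. Then the GUNC hypothesis forces this tuple to be uniquely nil-clean. I would then argue, exactly as in Lemma~\ref{lem2.5}, that if $u$ admitted two distinct nil-clean decompositions $u=e+q=e'+q'$ with $e\neq e'$, then inserting these into coordinate $i$ (and keeping a fixed nil-clean decomposition of $0$, namely $0=0+0$, in the other slots) would yield two distinct nil-clean decompositions of the tuple, contradicting uniqueness. Existence of a nil-clean decomposition for $u$ follows the same way from existence for the tuple. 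Hence every element of $R_i$, unit or non-unit, is UNC, so $R_i$ is UNC.

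For the converse, suppose each $R_i$ is UNC. I must show $\prod_{i=1}^{n}R_{i}$ is GUNC, i.e.\ that every non-unit $a=(a_1,\dots,a_n)$ is uniquely nil-clean. Since each $R_i$ is UNC, each coordinate $a_i$ has a unique nil-clean decomposition $a_i=e_i+q_i$; idempotents and nilpotents in a finite product are precisely the tuples of idempotents and nilpotents of the factors, so $e=(e_1,\dots,e_n)$ is idempotent, $q=(q_1,\dots,q_n)$ is nilpotent, and $a=e+q$ gives a nil-clean decomposition. Uniqueness is immediate: any nil-clean decomposition of $a$ must be coordinatewise nil-clean, and uniqueness in each factor pins down each $e_i$, hence $e$. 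Notably, this direction does not even use that $a$ is a non-unit; every element of the product is UNC, so the product is in fact UNC (and a fortiori GUNC).

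The only genuinely delicate point — and the reason the proposition needs $R_i$ to be UNC rather than merely GUNC in the forward direction — is the contrast with the remark following Lemma~\ref{lem2.5}, where $\Z_3\times\Z_3$ fails to be GUNC even though $\Z_3$ is GUNC. The obstacle to watch is therefore the \emph{units}: a GUNC factor may have units that are not uniquely nil-clean, and such a unit, once paired with a $0$ in another coordinate, becomes part of a non-unit tuple whose decomposition is then \emph{not} unique. This is precisely what the embedding argument detects, and it is what upgrades ``each factor GUNC'' to the sharper requirement ``each factor UNC.'' I expect no computational difficulty beyond carefully recording the standard facts that units, idempotents, and nilpotents in a finite direct product are exactly the coordinatewise ones.
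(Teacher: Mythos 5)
Your proof is correct and follows essentially the same route as the paper: both directions hinge on the zero-padding embedding (an element placed in one coordinate with $0$ in another slot gives a non-unit of the product) together with the coordinatewise correspondence of nil-clean decompositions. The only cosmetic differences are that you split the forward direction into non-units (via Lemma~\ref{lem2.5}) and units (via the embedding), where the paper argues uniformly by contradiction on a single non-UNC element, and that you verify the converse directly where the paper cites Diesl's result that a finite product of UNC rings is UNC.
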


\begin{proof}
Assuming every $R_{i}$ is UNC, so $\prod_{i=1}^{n}R_{i}$ is UNC using \cite[Proposition 5.2]{21} and hence $\prod_{i=1}^{n}R_{i}$ is necessarily GUNC.

Conversely, assume that $\prod_{i=1}^{n}R_{i}$ is GUNC and, in a way of contradiction, that $R_{j}$ is {\it not} UNC for some index $j$. Then, there exists $a\in R_{j}$ which is not UNC, so the vector $(0,\ldots,0,a,0,\ldots,0)$
is not UNC in $\prod_{i=1}^{n}R_{i}$ too. But, one verifies that $$(0,\ldots,0,a,0,\ldots,0)\notin U(\prod_{i=1}^{n}R_{i})$$ and, by hypothesis, $(0,\ldots,0,a,0,\ldots,0)$ is UNC contradicting our assumption. Therefore, every $R_{i}$ is UNC, as claimed.
\end{proof}

A ring $R$ is called {\it division}, if every non-zero element of $R$ is invertible. Also, a ring $R$ is called {\it local} if $R/J(R)$ is a division ring. thereby, as a consequence, we yield:

\begin{corollary}\label{cor2.7}
Let $R$ be a ring, and $e^{2}=e\in {\rm Z}(R)$. If $R$ is GUNC, then $eRe$ is GUNC. In particular, if $e$ is non-trivial, then $eRe$ is UNC.
\end{corollary}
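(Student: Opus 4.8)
The plan is to prove Corollary~\ref{cor2.7} by reducing the statement about the corner ring $eRe$ to the product decomposition already established in Proposition~\ref{pro2.6}. The crucial observation is that since $e$ is a central idempotent, $R$ splits as an internal direct product $R \cong eRe \times (1-e)R(1-e)$ of rings, where $eRe$ has identity $e$ and $(1-e)R(1-e)$ has identity $1-e$. This is the standard Peirce decomposition, and centrality of $e$ guarantees that the off-diagonal Peirce components $eR(1-e)$ and $(1-e)Re$ vanish, so the decomposition is genuinely a ring direct product rather than merely an additive one.

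First I would record this ring isomorphism $R \cong eRe \times (1-e)R(1-e)$ explicitly, noting that it is an isomorphism of rings with identity. Then, applying Lemma~\ref{lem2.5} to this two-factor product, the hypothesis that $R$ is GUNC immediately yields that each factor, in particular $eRe$, is GUNC. This already proves the first assertion of the corollary, so the main content lies in the \emph{In particular} clause.

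For the sharper conclusion, suppose $e$ is non-trivial, i.e. $e \neq 0$ and $e \neq 1$, so that both factors $eRe$ and $(1-e)R(1-e)$ are genuine rings (not the zero ring). Now the strengthening from GUNC to UNC for the corner $eRe$ should come from Proposition~\ref{pro2.6}: writing $R$ as the two-factor product above and invoking the converse direction of that proposition, the hypothesis that the product $R$ is GUNC forces each factor to be UNC. In particular $eRe$ is UNC, which is strictly stronger than merely GUNC. The point is that Proposition~\ref{pro2.6} says a finite product is GUNC precisely when every factor is genuinely UNC, so once we have the product presentation we get UNC of the corner for free.

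The step I expect to require the most care is verifying that the central-idempotent splitting $R \cong eRe \times (1-e)R(1-e)$ is a bona fide isomorphism of unital rings whose factors are exactly the corner rings appearing in the statement, and then matching the two-factor product to the precise hypotheses of Proposition~\ref{pro2.6} and Lemma~\ref{lem2.5} (which are stated for $n$-fold products). The non-triviality hypothesis on $e$ is exactly what is needed to ensure both factors are nonzero rings so that Proposition~\ref{pro2.6} applies to upgrade the conclusion from GUNC to UNC; without it, one of the factors could collapse and the UNC conclusion could fail. Once the decomposition and the non-triviality bookkeeping are in place, the conclusion is immediate from the two cited results.
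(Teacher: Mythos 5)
Your proposal is correct and follows essentially the same route as the paper: the central idempotent gives the splitting $R\cong eRe\times(1-e)R(1-e)$, Lemma~\ref{lem2.5} then yields that $eRe$ is GUNC, and Proposition~\ref{pro2.6} upgrades this to UNC when $e$ is non-trivial. Your explicit remark that non-triviality of $e$ is what keeps both factors nonzero (so that Proposition~\ref{pro2.6} genuinely applies) is a point the paper glosses over, but the argument is the same.
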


\begin{proof}
If, for a moment, a GUNC ring $R$ is {\it not} a local ring, then there exists an idempotent $e$ which is {\it not} trivial such that $R=eRe\oplus(1-e)R(1-e)$. As $R$ is GUNC, the corner $eRe$ is GUNC in accordance with Lemma
\ref{lem2.5}. In addition, if $e$ is non-trivial, then the subring $eRe$ is UNC in view of Proposition \ref{pro2.6}.
\end{proof}

Recall that a ring $R$ is {\it directly finite}, provided that $ab=1$ implies $ba=1$ for all $a,b\in R$ (or, equivalently, $aR=R$ implies $Ra=R$). We, thus, arrive at the following interesting property of GUNC rings.

\begin{proposition}\label{pro2.8}
Every GUNC ring is directly finite.
\end{proposition}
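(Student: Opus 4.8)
The plan is to show that in a GUNC ring $R$, the equation $ab=1$ forces $ba=1$. The natural approach is to work with the idempotent $e = ba$, which is the standard device in directly finite arguments: if $ab=1$, then $(ba)^2 = b(ab)a = ba$, so $e=ba$ is always an idempotent, and $e=1$ precisely when $R$ is directly finite at this pair. So the whole problem reduces to showing $ba=1$, equivalently that the idempotent $ba$ equals $1$.

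First I would invoke Lemma~\ref{lem2.4}, which tells us that every GUNC ring is abelian, so every idempotent is central. In particular the idempotent $e=ba$ lies in $\mathrm{Z}(R)$. Now I would use the fact that $a$ has a one-sided inverse to extract information: from $ab=1$ we get that $a$ is right-invertible, and multiplying the centrality relation $ba\cdot a = a\cdot ba$ appropriately should let me cancel. Concretely, since $e=ba$ is central, $a e = e a$, i.e.\ $a(ba) = (ba)a$, giving $(ab)a = b a^2$, hence $a = ba^2$. Left-multiplying the original $ab=1$ considerations against this, or alternatively computing $ea = (ba)a = a$ from $a=ba^2 = b a \cdot a = \dots$, I expect to deduce $ea = a$ and $be=b$, and then to leverage that $1-e$ is an idempotent annihilated appropriately by $a$ and $b$.

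The cleanest route, which I would actually carry out, is: set $e=ba$, note $e$ is central, and consider the complementary idempotent $1-e$. Then $(1-e)a = a - ba\cdot a = a - b a^2$; but $a = a\cdot 1 = a(ba) \cdot$ computations via centrality give $a - b a^2 = a - a(ba) = a - a e = a(1-e)$, so $(1-e)a = a(1-e)$ is consistent with centrality but I want to show $1-e$ itself is either a unit or UNC and force it to be $0$. The key leverage is that $1-e$ need not be a unit, so it is UNC. Since $1-e$ is itself a central idempotent, it has the trivial nil-clean decomposition $(1-e)+0$; by uniqueness of the idempotent in the nil-clean decomposition of the UNC element $1-e$, and because GUNC rings are abelian, I would compare this with the decomposition coming from the relation above to conclude $1-e$ is nilpotent as well as idempotent, hence $1-e=0$, i.e.\ $ba=1$.

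The main obstacle is handling the case split on whether $1-e$ (equivalently $e=ba$) is a unit, since the GUNC hypothesis only governs \emph{non-invertible} elements. If $1-e$ happens to be a unit, then since it is idempotent it must equal $1$, giving $e=0$ and hence $ba=0$; but then $1=ab$ and I would need to derive a contradiction from $ba=0$ together with $ab=1$ in an abelian ring (for instance, $b = b(ab) = (ba)b = 0$ would collapse $ab=1$ to $0=1$, the desired contradiction). Thus the real content is organizing this dichotomy cleanly: when $1-e$ is invertible we get an immediate contradiction unless $R$ is trivial, and when $1-e$ is non-invertible the GUNC/abelian structure forces it to be nilpotent and idempotent, hence zero. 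I would therefore expect the proof to be short, with the only delicate point being the bookkeeping that rules out $e=ba$ being a proper nontrivial central idempotent by playing its unique nil-clean decomposition against the idempotent relation.
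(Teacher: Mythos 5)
Your setup is exactly right: from $ab=1$ the element $e=ba$ is an idempotent, hence central by Lemma~\ref{lem2.4}, and you correctly derive $ea=a$ (equivalently $ba^2=a$) and $eb=b$. But the argument you then build on $1-e$ has a genuine gap. In your Case B you claim that, since $1-e$ is non-invertible and hence UNC, ``uniqueness of the idempotent in the nil-clean decomposition'' forces $1-e$ to be nilpotent and therefore zero. This does not follow. The UNC property of $1-e$ says only that there is exactly one idempotent $f$ with $(1-e)-f$ nilpotent; that unique $f$ is $1-e$ itself (via the decomposition $1-e=(1-e)+0$), and this is perfectly consistent with $1-e$ being a nonzero central idempotent. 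To rule out such an $f$ by uniqueness you would have to exhibit a second decomposition $1-e=0+q$ with $q$ nilpotent, i.e., you would need to already know that $1-e$ is nilpotent --- the reasoning is circular. (Your Case A, where $1-e$ is a unit, is handled correctly, but it turns out to be unnecessary.)

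The irony is that you were one line away from a complete proof before the detour. Having $ea=a$ with $e=ba$ central, simply multiply on the right by $b$:
$$e=e(ab)=(ea)b=ab=1,$$
so $ba=1$. This is precisely the paper's own proof, which reads $ba=ba(ab)=a(ba)b=(ab)(ab)=1$, using centrality of the idempotent $ba$ to move it past $a$. Once Lemma~\ref{lem2.4} gives abelian-ness, direct finiteness is pure computation; no case split on the invertibility of $1-e$ and no further appeal to the UNC property is needed.
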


\begin{proof}
Letting $ab=1$, it must be that $(ba)^{2}=baba=ba$. So, $ba$ is an idempotent in $R$, and hence it is central by Lemma \ref{lem2.4}. Therefore, $$ba=ba(ab)=a(ba)b=(ab)(ab)=1,$$ as required.
\end{proof}

Our next machinery, necessary to establish the global results, is the following.

\begin{proposition}\label{pro2.9}
Let $R$ be a ring. If $R$ is local with $J(R)$ nil, then $R$ is GUNC.
\end{proposition}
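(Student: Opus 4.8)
The plan is to reduce everything to two structural facts that come essentially for free from locality together with the nilness of the radical, after which the verification of unique nil-cleanness is short. First I would record that in a local ring the set of non-units coincides with $J(R)$: if $a\notin U(R)$, then $a\in J(R)$, since otherwise $a+J(R)$ would be a nonzero element of the division ring $R/J(R)$, hence invertible there, which would force $a\in U(R)$. Because $J(R)$ is assumed nil, this already shows that every non-invertible element $a$ is nilpotent, so that $a=0+a$ exhibits a nil-clean decomposition with idempotent part $0$. The whole problem thus collapses to proving the uniqueness of the idempotent.

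Next I would show that the only idempotents of $R$ are $0$ and $1$. If $e=e^{2}$ with $e\neq 0,1$, then $e(1-e)=0$ with both factors nonzero, so $e$ and $1-e$ are zero-divisors and hence non-units; by the previous paragraph both lie in $J(R)$, whence $1=e+(1-e)\in J(R)$, which is absurd. Therefore ${\rm Id}(R)=\{0,1\}$.

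With these two observations in hand, fix a non-invertible element $a$ and suppose $e\in{\rm Id}(R)$ satisfies $a-e\in{\rm Nil}(R)$. By the previous step $e\in\{0,1\}$. The value $e=1$ is impossible: since $a\in J(R)$, the element $1-a$ is a unit, so $a-1=-(1-a)$ is a unit and cannot be nilpotent in a nonzero ring. Hence $e=0$ is forced, which is precisely the decomposition already produced. Thus $a$ is uniquely nil-clean, and as $a$ was an arbitrary non-unit, $R$ is GUNC.

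The argument is largely bookkeeping around the definition of locality; the only step demanding a moment's care is the uniqueness clause, where the decisive input is the elementary fact that a unit is never nilpotent, which is exactly what rules out the idempotent $1$. I do not anticipate any serious obstacle beyond correctly identifying the non-units with $J(R)$ in the possibly noncommutative setting, for which the passage to the residue division ring $R/J(R)$ suffices.
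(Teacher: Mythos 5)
Your proposal is correct and takes essentially the same route as the paper: both identify the non-units of a local ring with $J(R)$, use nilness of $J(R)$ to get the decomposition $a=0+a$, and conclude uniqueness of the idempotent. The only difference is that the paper simply asserts this is ``the only nil-clean expression,'' whereas you properly justify it (trivial idempotents in a local ring, plus the fact that $a-1$ is a unit and hence not nilpotent) — a welcome filling-in of detail rather than a different argument.
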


\begin{proof}
Supposing $a\in R$, where $a\notin U(R)$, so $a\in J(R)\subseteq {\rm Nil}(R)$. Then, $a$ has the only nil-clean expression like this $a=0+a$. Hence, $a$ is UNC, as needed.
\end{proof}

\begin{lemma}\label{lem2.10}
Let $R$ be a ring. Then, the following are equivalent:
\\
$(i)$ $R$ is either local with $J(R)$ nil, or $R$ is UNC.
\\
$(ii)$ $R$ is a GUNC ring.
\end{lemma}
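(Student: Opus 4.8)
The plan is to prove the two implications separately, with $(i)\Rightarrow(ii)$ essentially a bookkeeping step and $(ii)\Rightarrow(i)$ carrying the content through a dichotomy on idempotents. For $(i)\Rightarrow(ii)$: if $R$ is local with $J(R)$ nil, then Proposition~\ref{pro2.9} already yields that $R$ is GUNC; and if $R$ is UNC, then every element of $R$ is UNC, so in particular every non-invertible element is UNC and $R$ is GUNC by Definition~\ref{def2.1}. Thus $(i)\Rightarrow(ii)$ needs no new argument.

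For $(ii)\Rightarrow(i)$ I would assume $R$ is GUNC, so that $R$ is abelian by Lemma~\ref{lem2.4}, and then split according to whether $R$ admits a nontrivial idempotent. Suppose first that there is an idempotent $e$ with $e\neq 0$ and $e\neq 1$. Since $e$ is central, one obtains a ring isomorphism $R\cong eRe\times(1-e)R(1-e)$ onto a product of two nonzero corner rings. As $R$ is GUNC, the reverse direction of Proposition~\ref{pro2.6} forces each factor $eRe$ and $(1-e)R(1-e)$ to be UNC; since a finite direct product of UNC rings is again UNC by \cite[Proposition 5.2]{21}, it follows that $R$ itself is UNC, placing $R$ in the second alternative of $(i)$.

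It remains to treat the case where $0$ and $1$ are the only idempotents of $R$. Here I would first observe that every non-unit is nilpotent: given $a\notin U(R)$, the element $a$ is UNC, so $a=e+q$ with $e\in\{0,1\}$ and $q\in{\rm Nil}(R)$; the choice $e=1$ would make $a=1+q$ a unit, which is impossible, so $e=0$ and $a=q\in{\rm Nil}(R)$. Hence the set $N:=R\setminus U(R)$ of non-units coincides with ${\rm Nil}(R)$, and to conclude that $R$ is local it suffices to show that $N$ is a two-sided ideal. Closure of $N$ under multiplication by ring elements follows from direct finiteness (Proposition~\ref{pro2.8}): were $ra$ or $ar$ a unit for some non-unit $a$, then $a$ would be one-sidedly and hence, by direct finiteness, two-sidedly invertible, a contradiction. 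The crux is closure under addition: if $a,b\in N$ satisfied $a+b=u\in U(R)$, then setting $x=u^{-1}a$ makes both $x$ and $1-x=u^{-1}b$ non-units, hence both nilpotent, yet a nilpotent $x$ forces $1-x\in U(R)$, which is absurd. Therefore $N={\rm Nil}(R)$ is the unique maximal ideal, so $R$ is local and $J(R)=N$ is nil, which is the first alternative of $(i)$.

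The step I expect to be the main obstacle is exactly this addition-closure computation in the second case, since it is the place where the local structure (and the nilpotence of $J(R)$) is actually extracted; the rest of the argument reduces to invoking Propositions~\ref{pro2.6}, \ref{pro2.8} and~\ref{pro2.9} together with the elementary fact that $1$ minus a nilpotent is a unit.
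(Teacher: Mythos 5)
Your proposal is correct, and in the decomposable case it is exactly the paper's argument: a nontrivial idempotent (central by Lemma~\ref{lem2.4}) splits $R$ into a product of two nonzero corners, Proposition~\ref{pro2.6} (which the paper invokes through Corollary~\ref{cor2.7}) forces both corners to be UNC, and \cite[Proposition 5.2]{21} reassembles them into a UNC ring. Where you genuinely diverge is in how the dichotomy is organized and in what you prove in the remaining case. The paper splits on ``local versus not local'' and simply \emph{asserts} that a non-local GUNC ring has a nontrivial idempotent; moreover, its proof never verifies that $J(R)$ is nil in the local alternative, even though statement (i) requires this (and the paper cannot quote Corollary~\ref{cor2.12} here, since that corollary is deduced \emph{from} Lemma~\ref{lem2.10}). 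You split instead on ``nontrivial idempotent versus only trivial idempotents'' and then supply the argument the paper leaves implicit: with only trivial idempotents, every non-unit $a$ is UNC with idempotent part $0$ (the case $a=1+q$ being impossible because $1+q\in U(R)$), so $R\setminus U(R)={\rm Nil}(R)$; this set is closed under multiplication by $R$ via direct finiteness (Proposition~\ref{pro2.8}), and closed under addition because $x=u^{-1}a$ and $1-x=u^{-1}b$ cannot both be nilpotent; hence the non-units form an ideal, $R$ is local, and $J(R)={\rm Nil}(R)$ is nil. So your route is slightly longer but self-contained and closes both gaps in the paper's two-line treatment of (ii)$\Rightarrow$(i); alternatively, your Case 2 could be shortened by citing that GUNC rings are clean (Corollary~\ref{cor2.14}) and that a clean ring with only trivial idempotents is local, but that would still leave the nilness of $J(R)$ to be checked by your nilpotency argument.
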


\begin{proof}
(i) $\Rightarrow$ (ii). It is straightforward bearing in mind Proposition \ref{pro2.9}.
\\
(ii) $\Rightarrow$ (i). Assuming that $R$ is a GUNC ring which is {\it not} local, then there exists an idempotent $e$ that is {\it not} trivial such that $R=eRe\oplus(1-e)R(1-e)$. Hence, both $eRe$ and $(1-e)R(1-e)$
are UNC taking into account Corollary \ref{cor2.7}. Consequently, $R$ is UNC in virtue of \cite[proposition 5.2]{21}.
\end{proof}

\begin{corollary}\label{cor2.11}
A ring $R$ is GUNC if, and only if, $R$ is GUC and $J(R)$ is nil.
\end{corollary}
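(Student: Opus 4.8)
The plan is to read Corollary~\ref{cor2.11} as a short Boolean recombination of Lemma~\ref{lem2.10}, which already supplies the whole skeleton. That lemma gives
\[
R \text{ is GUNC} \iff \big(R \text{ is local with } J(R)\text{ nil}\big)\ \vee\ \big(R \text{ is UNC}\big),
\]
so the only remaining task is to rewrite the right-hand disjunction as the single clause ``$R$ is GUC and $J(R)$ is nil.'' To do this I would feed in two auxiliary structural descriptions: the standard characterization of uniquely nil-clean rings as exactly the uniquely clean rings with nil Jacobson radical (cf.\ \cite{21}), i.e.\ $R$ is UNC $\iff$ $R$ is UC and $J(R)$ is nil; and the description of GUC rings from \cite{2}, namely $R$ is GUC $\iff$ $R$ is local or $R$ is UC. Granting these, each disjunct above carries the clause ``$J(R)$ nil'' (explicitly in the local branch, and because UNC forces $J(R)$ nil in the other), so I can factor it out:
\[
\big(\text{local}\wedge J\text{ nil}\big)\vee\big(\text{UC}\wedge J\text{ nil}\big)\ \iff\ J\text{ nil}\ \wedge\ (\text{local}\vee \text{UC})\ \iff\ J\text{ nil}\ \wedge\ \text{GUC},
\]
which is precisely the asserted equivalence. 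Routing everything through Lemma~\ref{lem2.10} in this way avoids any element-by-element bookkeeping.

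The clause ``$J(R)$ is nil'' for a GUNC ring admits a clean self-contained proof that I would include rather than cite: any $a\in J(R)$ is non-invertible, hence uniquely nil-clean, say $a=e+q$ with $e$ an idempotent and $q$ nilpotent; reducing modulo $J(R)$ and using $\bar a=0$ gives $\bar e=-\bar q$, so $\bar e$ is simultaneously idempotent and nilpotent and is therefore $0$. Thus $e$ is an idempotent lying in $J(R)$, forcing $e=0$, whence $a=q$ is nilpotent. This shows $J(R)\subseteq{\rm Nil}(R)$ uniformly, without splitting into cases. For the remaining, genuinely structural content — that a UNC ring is UC (and conversely that a UC ring with nil radical is UNC, with the idempotent of the nil-clean decomposition unique), and that a non-local GUC ring is UC — I would lean on \cite{21} and \cite{2}, since these are exactly the points where the abelian property (Lemma~\ref{lem2.4}), the lifting of idempotents modulo a nil radical, and the Boolean quotient $R/J(R)$ of a UC ring all enter.

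The main obstacle, and the reason the proof must go through Lemma~\ref{lem2.10} rather than a uniform argument, is that the two halves of the GUNC dichotomy have genuinely different residue behaviour: a local ring with nil radical has $R/J(R)$ a division ring, whereas a UNC ring has $R/J(R)$ Boolean. Hence the local branch is not absorbed by the UC branch — $\mathbb Z_5$ is local (so GUNC and GUC) but is not UC — and the disjunction ``local $\vee$ UC'' is not redundant. Consequently the delicate external input I expect to lean on is the nontrivial implication ``$R$ GUC and not local $\Rightarrow$ $R$ UC'' of \cite{2}, together with the uniqueness half of ``UC with nil radical $\Rightarrow$ UNC'': verifying that these hold in exactly the form required — in particular that the idempotent in the nil-clean decomposition is forced, which is where central idempotents and the absence of nonzero idempotents in $J(R)$ do the work — is where the real effort lies, after which the recombination displayed above is purely formal.
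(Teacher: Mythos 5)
Your proposal is correct and takes essentially the same route as the paper: the paper's one-line proof is exactly the combination of Lemma~\ref{lem2.10} with the characterization of GUC rings in \cite[Theorem 2.10]{2} and the equivalence ``UNC $\iff$ UC with $J(R)$ nil'' from \cite[Lemma 5.3.7]{3}, which is precisely the Boolean recombination you display. Your self-contained argument that GUNC forces $J(R)$ to be nil is a harmless extra (the paper obtains this as Corollary~\ref{cor2.12} by citation).
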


\begin{proof}
It follows combining Lemma \ref{lem2.10}, \cite[Theorem $ 2.10 $ ]{2} and \cite[Lemma 5.3.7]{3}.
\end{proof}

\begin{corollary}\label{cor2.12}
If $R$ is GUNC, then $J(R)$ is nil.
\end{corollary}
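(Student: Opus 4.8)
The plan is to exploit the dichotomy furnished by Lemma~\ref{lem2.10}, which tells us that a GUNC ring $R$ is either local with $J(R)$ nil, or else a UNC ring. In the first alternative there is nothing to do, since nilness of $J(R)$ is built into the hypothesis. Hence the whole matter reduces to establishing that the Jacobson radical of a \emph{UNC} ring is nil, and this is the only point that requires genuine work.

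To handle the UNC case, I would first note that a UNC ring is in particular nil-clean: the uniqueness clause is irrelevant here, and the mere existence of a nil-clean decomposition for every element is exactly nil-cleanness. So it suffices to prove the fact that every nil-clean ring has nil Jacobson radical, which I would carry out directly rather than only quoting \cite{21}. Take $a\in J(R)$ and write $a=e+w$ with $e^{2}=e$ and $w\in{\rm Nil}(R)$; then pass to $R/J(R)$. There $\bar a=0$, so $\bar e=-\bar w$ is at once idempotent and nilpotent, forcing $\bar e=0$, i.e.\ $e\in J(R)$. As $J(R)$ contains no nonzero idempotent, we get $e=0$, whence $a=w$ is nilpotent; since $a$ was arbitrary, $J(R)$ is nil. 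An even shorter route is available through Corollary~\ref{cor2.11}, which already asserts that $R$ is GUNC precisely when $R$ is GUC \emph{and} $J(R)$ is nil, so the desired conclusion is literally the second half of that forward implication; I would probably state the proof this way for brevity while keeping the quotient computation above as its conceptual justification.

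As for difficulty, there is essentially no serious obstacle here: the statement is a soft consequence of the structural results already in place. The only mildly delicate point is the reduction in the UNC case, where one must recall that idempotents survive faithfully modulo $J(R)$, so that a nilpotent idempotent in the quotient is forced to vanish; everything else is routine bookkeeping.
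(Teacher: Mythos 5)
Your proposal is correct and follows essentially the same route as the paper: the paper's proof is precisely the combination of Lemma~\ref{lem2.10} with the cited fact from Diesl \cite[Theorem 5.9]{21} that (uniquely) nil-clean rings have nil Jacobson radical. The only difference is that you inline a direct (and correct) proof of that cited fact -- passing to $R/J(R)$ to force the idempotent in the decomposition to vanish -- rather than quoting it, which makes the argument self-contained but does not change the approach.
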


\begin{proof}
It is immediate by combination of Lemma \ref{lem2.10} and \cite[Theorem $ 5.9 $]{21}.
\end{proof}

A ring $R$ is called {\it boolean} if every element of $R$ is an idempotent.

\begin{corollary}\label{cor2.13}
If $R$ is GUNC, then $J(R)={\rm Nil}(R)$.
\end{corollary}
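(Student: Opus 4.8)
The plan is to prove the two inclusions separately. The inclusion $J(R)\subseteq {\rm Nil}(R)$ is already in hand: it is precisely Corollary~\ref{cor2.12}, which guarantees that $J(R)$ is nil, so every element of $J(R)$ is nilpotent. Hence the whole content lies in the reverse inclusion ${\rm Nil}(R)\subseteq J(R)$.

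For the reverse inclusion I would reduce matters to showing that the factor ring $R/J(R)$ is reduced, i.e.\ has no nonzero nilpotent elements. Indeed, if $a\in {\rm Nil}(R)$, then under the canonical surjection its image $\bar a\in R/J(R)$ is again nilpotent; so once $R/J(R)$ is known to be reduced we must have $\bar a=0$, that is $a\in J(R)$. Thus it suffices to prove that $R/J(R)$ is reduced.

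To establish this I would invoke the dichotomy supplied by Lemma~\ref{lem2.10}: a GUNC ring $R$ is either local with $J(R)$ nil, or it is UNC. In the first case $R/J(R)$ is by definition a division ring, and a division ring has no nonzero nilpotents; one even sees at once that every nilpotent is a non-unit and that in a local ring the non-units coincide with $J(R)$, so ${\rm Nil}(R)\subseteq J(R)$ directly. In the second case $R$ is UNC, hence uniquely clean by Example~\ref{exa2.2}(ii), and a uniquely clean ring has Boolean factor ring $R/J(R)$ by \cite{25}; as Boolean rings are reduced, $R/J(R)$ is reduced here too. Either branch yields the reduction step, and combining it with $J(R)\subseteq {\rm Nil}(R)$ gives the desired equality $J(R)={\rm Nil}(R)$.

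The main obstacle is the UNC branch. The local branch is essentially immediate from the definitions, but the UNC branch rests on the structural fact that uniquely clean rings have a Boolean (hence reduced) Jacobson factor. I would therefore be careful to route the argument through the implication UNC $\Rightarrow$ UC recorded in Example~\ref{exa2.2}(ii) together with the Nicholson--Zhou description of uniquely clean rings in \cite{25}, rather than trying to verify by hand that ${\rm Nil}(R)$ is a two--sided ideal of $R$.
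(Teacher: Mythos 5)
Your proposal is correct, and it follows the same overall strategy as the paper: get $J(R)\subseteq{\rm Nil}(R)$ from Corollary~\ref{cor2.12}, then prove the reverse inclusion by showing every nilpotent maps to zero in $R/J(R)$ because that factor ring is either Boolean or a division ring, hence reduced. The difference lies in how the Boolean-or-division dichotomy is obtained. The paper passes through its own Corollary~\ref{cor2.11} (GUNC $\Rightarrow$ GUC with $J(R)$ nil) and then quotes the structure theorem for GUC rings in \cite[Corollary 2.11]{2} in a single step. You instead use the paper's internal dichotomy, Lemma~\ref{lem2.10} (local with nil $J(R)$, or UNC), handle the local branch directly (nilpotents are non-units, and in a local ring the non-units are exactly $J(R)$), and route the UNC branch through UNC $\Rightarrow$ UC (Example~\ref{exa2.2}(ii)) and the Nicholson--Zhou description of uniquely clean rings in \cite{25}, which gives a Boolean factor $R/J(R)$. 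Both arguments are sound; the paper's is shorter because it outsources the dichotomy to the GUC literature, while yours stays closer to results already established inside the paper (Lemma~\ref{lem2.10}) at the cost of a two-case analysis and one external fact about uniquely clean rings. Your observation that the local branch admits a completely elementary proof, with no structure theory at all, is a small bonus the paper's uniform treatment does not make visible.
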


\begin{proof}
Consulting with Corollary \ref{cor2.12}, we know that $J(R)\subseteq {\rm Nil}(R)$. Now, assume that $x\in {\rm Nil}(R)$. Then, $\bar{x}\in \bar{R}=\dfrac{R}{J(R)}$ is nilpotent. Exploiting \cite[Corollary $ 2. 11 $]{2},
the quotient-ring $\bar{R}$ is either boolean or division. If, foremost, $\bar{R}$ is boolean, thus $\bar{x}$ has to be an idempotent, whence $\bar{x}=\bar{0}$. So, $x\in J(R)$.

If, however, $\bar{R}$ is a division factor-ring, we have again $\bar{x}=\bar{0}$ and $x\in J(R)$. Thus, in both cases, $x\in J(R)$ and hence $J(R)={\rm Nil}(R)$, as stated.
\end{proof}

The following property sounds somewhat curiously.

\begin{corollary}\label{cor2.14}
If $R$ is GUNC, then $R$ is strongly clean.
\end{corollary}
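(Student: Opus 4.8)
The plan is to produce, for an arbitrary $a \in R$, an explicit strongly clean decomposition, relying on just two facts already in hand: the definition of GUNC (every non-unit is uniquely nil-clean) and the fact that a GUNC ring is abelian (Lemma~\ref{lem2.4}). First I would dispose of the units: if $a \in U(R)$, then $f=0$ works, since $a-0=a$ is a unit and $0$ commutes with $a$. So the content is entirely in the non-unit case, and there I would feed $a$ through the GUNC hypothesis.

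So suppose $a \notin U(R)$. By Definition~\ref{def2.1}, $a$ is uniquely nil-clean, say $a = e+q$ with $e^2=e$ and $q \in \mathrm{Nil}(R)$; by Lemma~\ref{lem2.4} the idempotent $e$ is central, hence commutes with $q$ and with $a$. I would then set $f := 1-e$ and $u := a-f = (2e-1)+q$, and claim $a = f+u$ is the desired decomposition. That $f$ is idempotent and that $f$ commutes with $u$ (indeed with everything) are immediate from centrality of $e$. The one real computation is that $u$ is a unit: here $2e-1$ is a central involution, since $(2e-1)^2 = 4e^2-4e+1 = 1$, and $q$ commutes with it, so the commuting product $(2e-1)q$ is nilpotent and the factorization $u = (2e-1)\bigl(1+(2e-1)q\bigr)$ exhibits $u$ as a unit times $1+(\text{nilpotent})$, whence $u \in U(R)$. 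This yields $a = f+u$ strongly clean and finishes the argument.

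I expect the only delicate point to be this unit verification, and it hinges entirely on centrality of $e$: without Lemma~\ref{lem2.4} the product $(2e-1)q$ need not be nilpotent and the factorization of $u$ collapses. Note also that the argument never needs the local/UNC dichotomy of Lemma~\ref{lem2.10}; it treats every GUNC ring uniformly, with the nilpotent (local) case simply corresponding to $e=0$ and $f=1$. An alternative, more black-box route would bypass the explicit decomposition altogether: by Corollary~\ref{cor2.11} every GUNC ring is GUC, and one could instead quote the known implication that GUC rings are strongly clean (the arrow GUC$\,\to\,$Strongly clean in the diagram above). I prefer the self-contained computation, since it keeps the result independent of external GUC theory and makes transparent exactly where abelianness is used.
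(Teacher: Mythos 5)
Your proof is correct, but it takes a genuinely different route from the paper's. The paper disposes of this corollary in one line: it invokes Corollary~\ref{cor2.11} (a ring is GUNC if and only if it is GUC with $J(R)$ nil) and then cites an external result ([2, Lemma 2.3]) giving the implication from GUC to strongly clean --- precisely the ``black-box route'' you mention and decline at the end of your proposal. Your argument instead converts the nil-clean decomposition directly into a strongly clean one: writing a non-unit $a=e+q$ and using centrality of $e$ (Lemma~\ref{lem2.4}), you pass to $f=1-e$ and $u=(2e-1)+q=(2e-1)\bigl(1+(2e-1)q\bigr)$, which is a unit because $2e-1$ is a central involution and $1+(2e-1)q$ is $1$ plus a nilpotent. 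This computation checks out, and note it never uses the \emph{uniqueness} in the UNC hypothesis, only existence of a nil-clean decomposition plus abelianness (uniqueness enters only indirectly, through the proof of Lemma~\ref{lem2.4}). What each approach buys: yours is self-contained, independent of the GUC literature, and makes the role of abelianness explicit; the paper's is shorter and slots the result into its general scheme of reducing GUNC facts to known GUC facts. It is worth observing that your key computation is exactly the classical argument that a strongly nil-clean element is strongly clean (Diesl's result, cited as [21, Corollary 3.6] in the paper's proof of Lemma~\ref{lem3.2}), so in effect you have reproved that lemma in the special case at hand rather than quoting it.
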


\begin{proof}
It follows at once applying Corollary \ref{cor2.11} and \cite[Lemma $2.3$]{2}.
\end{proof}

The next statement is pivotal for our further presentation.

\begin{proposition}\label{pro2.15}
Let $R$ be a ring and $I$ is a nil-ideal of $R$. Then, the following two equivalencies hold:
\\
$(i)$ $R$ is GUNC if, and only if, $\dfrac{R}{J(R)}$ is GUNC, $J(R)$ is nil, and $R$ is abelian.
\\
$(ii)$ $R$ is GUNC if, and only if, $\dfrac{R}{I}$ is GUNC and $R$ is abelian.
\end{proposition}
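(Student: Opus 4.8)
The plan is to prove the single nil-ideal statement (ii) directly and then obtain (i) as the special case $I=J(R)$. Throughout I will use three standard facts about a nil ideal $I$: since $I\subseteq J(R)$, an element $a\in R$ is a unit precisely when its image $\overline{a}\in R/I$ is a unit (so units and non-units correspond under the projection); idempotents lift modulo $I$; and nilpotency both descends and lifts, i.e. $\overline{y}$ is nilpotent in $R/I$ if, and only if, $y$ is nilpotent in $R$ (the ``lifting'' direction because $\overline{y}^{\,m}=0$ forces $y^{m}\in I$, and $I$ nil then gives $y^{mk}=0$).

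For the forward implication of (ii), assume $R$ is GUNC. Then $R$ is abelian by Lemma~\ref{lem2.4}. To see that $R/I$ is GUNC, take a non-unit $\overline{a}\in R/I$; by the unit correspondence $a$ is a non-unit of $R$, hence UNC, say $a=e+q$ with $e^{2}=e$ and $q\in{\rm Nil}(R)$. Reducing gives the decomposition $\overline{a}=\overline{e}+\overline{q}$. For uniqueness, suppose $\overline{a}=\overline{f}+\overline{n}$ in $R/I$ with $\overline{f}$ idempotent and $\overline{n}$ nilpotent; lift $\overline{f}$ to an idempotent $g\in R$. Then $\overline{a-g}=\overline{n}$ is nilpotent, so $a-g$ is nilpotent by the lifting of nilpotency, and $a=g+(a-g)$ is a nil-clean decomposition of $a$ in $R$. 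Uniqueness of the decomposition of $a$ forces $g=e$, whence $\overline{f}=\overline{e}$; thus $\overline{a}$ is UNC and $R/I$ is GUNC.

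For the reverse implication of (ii), assume $R/I$ is GUNC and $R$ is abelian, and let $a\in R$ be a non-unit. Then $\overline{a}$ is a non-unit of $R/I$, hence UNC: write $\overline{a}=\overline{e}+\overline{n}$ with $\overline{e}$ idempotent and $\overline{n}$ nilpotent, and lift $\overline{e}$ to an idempotent $e\in R$. As above $a-e$ is nilpotent, giving the existence of a nil-clean decomposition $a=e+(a-e)$. The crux is uniqueness. If $a=e_{1}+q_{1}=e_{2}+q_{2}$ are two nil-clean decompositions, then $\overline{e_{1}}=\overline{e_{2}}$ by uniqueness in $R/I$, i.e. $e_{1}-e_{2}\in I$. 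Since $R$ is abelian the idempotents $e_{1},e_{2}$ are central, so they commute, and a direct computation in the commutative subring they generate yields
\[
(e_{1}-e_{2})^{3}=e_{1}-e_{2}.
\]
Because $e_{1}-e_{2}\in I$ is nilpotent, the relation $x=x^{3}$ makes $e_{1}-e_{2}$ equal to arbitrarily high odd powers of itself, hence $e_{1}-e_{2}=0$. Thus $e_{1}=e_{2}$ and $q_{1}=q_{2}$, so $a$ is UNC and $R$ is GUNC. This uniqueness-of-idempotents step, resting on abelianness together with nilpotency, is the main obstacle; everything else is routine transport through the projection.

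Finally, (i) follows from (ii) applied with $I=J(R)$. If $R$ is GUNC, then $J(R)$ is nil by Corollary~\ref{cor2.12} and $R$ is abelian by Lemma~\ref{lem2.4}, so $J(R)$ is a legitimate nil ideal and (ii) gives that $R/J(R)$ is GUNC. Conversely, if $R/J(R)$ is GUNC, $J(R)$ is nil, and $R$ is abelian, then $I=J(R)$ is a nil ideal and the reverse implication of (ii) immediately yields that $R$ is GUNC.
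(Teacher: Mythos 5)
Your proof is correct, and its core ingredients --- the correspondence of units modulo a nil ideal, lifting of idempotents and nilpotents, and forcing $e_{1}=e_{2}$ from the identity $(e_{1}-e_{2})^{3}=e_{1}-e_{2}$ for commuting idempotents --- are exactly the paper's. What differs is the logical organization, and yours is the better one: the paper proves (i) (the $J(R)$ case) in full and then dismisses (ii) with ``the proof is quite similar,'' whereas you prove the general nil-ideal statement (ii) once and obtain (i) as the specialization $I=J(R)$, invoking Corollary~\ref{cor2.12} and Lemma~\ref{lem2.4} to verify that $J(R)$ is a legitimate nil ideal and that $R$ is abelian. This reversal buys a genuinely complete proof of (ii) rather than an appeal to similarity, and along the way you make explicit two points the paper leaves implicit: the \emph{existence} half of the converse (lifting $\bar{e}$ to an idempotent $e$ and checking $a-e$ is nilpotent before addressing uniqueness), and the justification that $x=x^{3}$ together with $x$ nilpotent forces $x=0$ (the paper's corresponding step $(e-f)^{3}=e-f\Rightarrow e=f$ is stated without comment; it can be justified either by nilpotency as you do, or by noting $e-f\in J(R)$ makes $1-(e-f)^{2}$ a unit). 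The paper's arrangement has only the mild advantage of working in the most familiar setting, where lifting modulo $J(R)$ is a textbook fact; mathematically the two routes coincide.
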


\begin{proof}
(i). Given $R$ is GUNC and $\bar{a}\in \bar{R}=\dfrac{R}{J(R)}$, where $\bar{a}\notin U(R)$. So, one sees that $a\notin U(R)$, as for otherwise, if $a\in U(R)$, then it must be that $\bar{a}\in U(\bar{R})$ and this is a contradiction. Henceforth, we must show that $\bar{a}$ is UNC. To this goal, write $\bar{a}=\bar{e}+\bar{q}_{1}=\bar{f}+\bar{q}_{2}$, where $\bar{e},\bar{f}\in Id(\bar{R})$ and $\bar{q}_{1},\bar{q}_{2}\in Nil(\bar{R})$.
Thus, we have $a-(e+q_{1}), a-(f+q_{2})\in J(R)$ and hence $a=e+(q_{1}+j_{1})=f+(q_{2}+j_{2})$ for some $j_{1},j_{2}\in J(R)$, where $e,f\in Id(R)$, because idempotents lift modulo $J(R)$, and $(q_{1}+j_{1}),(q_{2}+j_{2})\in {\rm Nil}(R)$. But we know that $a$ is UNC, and so $e=f$ whence $\bar{e}=\bar{f}$. Therefore, $\bar{a}$ is UNC. Moreover, $J(R)$ is nil owing to Corollary \ref{cor2.12}, and $R$ is abelian according to Lemma \ref{lem2.4}, as formulated.

Conversely, let $a\in R$, where $a\notin U(R)$. So, one observes that $\bar{a}\notin U(\bar{R})$, as for otherwise, if $\bar{a}\in U(\bar{R})$, then it must be that $a\in U(R)$, because units lift modulo $J(R)$ and this is a contradiction. Now, writing $a=e+q_{1}=f+q_{2}$, where $e,f\in {\rm Id}(R)$ and $q_{1},q_{2}\in {\rm Nil}(R)$, so we have $\bar{a}=\bar{e}+\bar{q}_{1}=\bar{f}+\bar{q}_{2}$, where $\bar{e},\bar{f}\in {\rm Id}(\bar{R})$ and
$\bar{q}_{1},\bar{q}_{2}\in {\rm Nil}(\bar{R})$. But, the element $\bar{a}$ is UNC, so that $\bar{e}=\bar{f}$ and, consequently, $e-f\in J(R)$. As $R$ is abelian, $(e-f)^{3}=(e-f)$, and thus $e-f=0$ implying $e=f$. Finally, $a$ is UNC, as expected.
\\
(ii). The proof is quite similar to the preceding point (i), so we omit the details.
\end{proof}

\begin{proposition}\label{pro2.16}
$ ~ $
\\
$(i)$ For any ring $R$, the power series ring $R[[x]]$ is not GUNC.
\\
$(ii)$ If $R$ is any commutative ring, then $R[x]$ is not GUNC.
\\
$(iii)$ The matrix rings ${\rm M}_{n}(R)$ and ${\rm T}_{n}(R)$ are never GUNC for any $n\geqslant 2$.
\end{proposition}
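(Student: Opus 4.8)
The plan is to use the structural characterizations established earlier—particularly that every GUNC ring is abelian (Lemma~\ref{lem2.4}), that $J(R)$ is nil whenever $R$ is GUNC (Corollary~\ref{cor2.12}), and the dichotomy in Lemma~\ref{lem2.10} that a GUNC ring is either local with nil Jacobson radical or else UNC. For parts (i) and (ii), my strategy is to produce, in each candidate ring, a single non-invertible element admitting two genuinely distinct nil-clean decompositions, thereby violating the uniqueness requirement in Definition~\ref{def2.1}. For part (iii), the cleanest route is to invoke the abelian obstruction: matrix rings of size $n \geqslant 2$ contain non-central idempotents, so they cannot be abelian, and hence cannot be GUNC by the contrapositive of Lemma~\ref{lem2.4}.

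For part (iii) I would argue as follows. In ${\rm M}_{n}(R)$ with $n \geqslant 2$, the matrix unit $E_{11}$ is an idempotent that fails to commute with $E_{12}$, so $E_{11} \notin {\rm Z}({\rm M}_{n}(R))$; thus ${\rm M}_{n}(R)$ is not abelian, and Lemma~\ref{lem2.4} forbids it from being GUNC. For ${\rm T}_{n}(R)$, the same non-central idempotent $E_{11}$ lives in the upper-triangular subring and still fails to commute with $E_{12}$, so ${\rm T}_{n}(R)$ is likewise non-abelian and therefore not GUNC. This disposes of (iii) with essentially no computation.

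For parts (i) and (ii), the key observation is that the indeterminate $x$ is a non-invertible element (it lies in no unit group, being a non-unit nilpotent-free element), so GUNC would force $x$ to be uniquely nil-clean. In $R[[x]]$ and in $R[x]$, the only nilpotent elements are those with nilpotent coefficients, and in the simplest setting $x$ itself is neither nilpotent nor a unit plus nilpotent; the cleanest approach is to reduce to the Jacobson-radical quotient via Proposition~\ref{pro2.15} and Corollary~\ref{cor2.12}. Specifically, if $R[[x]]$ were GUNC then $J(R[[x]])$ would be nil by Corollary~\ref{cor2.12}; but $J(R[[x]])$ always contains $x$ (since $1 - xg$ is a unit for every $g \in R[[x]]$), and $x$ is not nilpotent in $R[[x]]$, a contradiction. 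This is the slickest line for part (i). For part (ii), commutativity of $R$ lets me control ${\rm Nil}(R[x])$ and $U(R[x])$ explicitly: the units of $R[x]$ are polynomials with unit constant term and nilpotent higher coefficients, so $x$ is a non-unit, and I would exhibit two distinct nil-clean decompositions of a suitable non-unit (for instance, using that $0$ and a nontrivial idempotent coefficient can both serve as the idempotent part for a carefully chosen element) to break uniqueness.

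The main obstacle I anticipate is part (ii): unlike the power-series case, $J(R[x])$ need not contain $x$, so the quick radical argument that works for (i) is unavailable. I expect to need a direct construction of two distinct nil-clean decompositions of a specific non-invertible element of $R[x]$, and the delicate point will be verifying that the two idempotents I produce are genuinely distinct while both differences remain nilpotent—this requires careful bookkeeping of how idempotents and nilpotents behave in $R[x]$ over a general (possibly non-reduced) commutative base ring $R$. The case analysis may need to split according to whether $R$ itself is reduced, and ensuring the chosen element is non-invertible in all cases is where I would spend the most care.
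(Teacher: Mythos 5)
Your parts (i) and (iii) are correct and coincide with the paper's own arguments: the paper likewise settles (i) by noting that $J(R[[x]])$ is not nil and invoking Corollary \ref{cor2.12}, and settles (iii) by the abelian obstruction of Lemma \ref{lem2.4}; your explicit verifications (that $x \in J(R[[x]])$ is not nilpotent, and that $E_{11}E_{12} \neq E_{12}E_{11}$) merely fill in what the paper leaves to the reader.

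Part (ii), however, contains a genuine gap, and the plan you outline cannot close it. You propose to violate GUNC by exhibiting a non-unit of $R[x]$ with \emph{two distinct} nil-clean decompositions, but for a general commutative $R$ no such element need exist. Take $R = \mathbb{Z}$: then ${\rm Nil}(\mathbb{Z}[x]) = \{0\}$ and ${\rm Id}(\mathbb{Z}[x]) = \{0,1\}$, so the only nil-clean elements of $\mathbb{Z}[x]$ are $0$ and $1$, each with exactly one decomposition --- uniqueness can never be broken in this ring, no matter how carefully you choose the element. The failure of GUNC in $R[x]$ is a failure of \emph{existence}, not of uniqueness: the element $x$ is a non-unit (its constant term is not a unit), yet it admits no nil-clean decomposition at all, because for commutative $R$ every idempotent of $R[x]$ lies in $R$ and every nilpotent of $R[x]$ has all coefficients nilpotent, so for any idempotent $e$ the difference $x - e$ has the non-nilpotent coefficient $1$ in degree one. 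Once you reorient the argument toward non-existence, part (ii) becomes a two-line computation. Note also that the paper itself takes a different, slicker route: by Corollary \ref{cor2.14} every GUNC ring is strongly clean (hence clean), while by \cite[Example 2]{4} the polynomial ring $R[x]$ over a commutative ring is never clean --- a contradiction. Either repair works, but as written your proposal for (ii) chases a phenomenon (non-unique decompositions) that simply does not occur in examples such as $\mathbb{Z}[x]$.
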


\begin{proof}
(i). Note the principal fact that the Jacobson radical of $R[[x]]$ is {\it not} nil. Thus, invoking Corollary \ref{cor2.12}, $R[[x]]$ is really {\it not} a GUNC ring.
\\
(ii). If we assume the contrary that $R[x]$ is GUNC, then Corollary \ref{cor2.14} gives that $R[x]$ is clean. This, however, is impossible in conjunction with \cite[Example $ 2 $]{4}.
\\
(iii). It is pretty obvious referring to Lemma \ref{lem2.4}.
\end{proof}

The next constructions are worthwhile.

\begin{example}\label{exa2.17}
$ ~ $
\\
(i). Any field and even any division ring is GUNC.
\\
(ii). Any GUNC ring is strongly clean but, the converse is manifestly {\it not} true in general. Indeed, the ring ${\rm M}_{2}(\hat{\mathbb{Z}}_{p})$ is strongly clean, but is definitely {\it not} GUNC.
\\
(iii). Any GUNC ring is GUC but, the converse is {\it not} true in general. In fact, it is {\it not} too hard to see that the ring $\mathbb{Z}_{2}[[x]]$ is GUC but is {\it not} GUNC.
\end{example}

\begin{proof}
(i). It is evident by the definition of a GUNC ring.
\\
(ii). Note that, adapting \cite[Theorem $ 2. 4 $]{18}, the ring ${\rm M}_{2}(\hat{\mathbb{Z}}_{p})$ is strongly clean. However, it is {\it not GUNC}, because all GUNC rings are always abelian.
\\
(iii). Note that the ring $\mathbb{Z}_{2}[[x]]$ is uniquely clean, and hence is GUC, but it is {\it not} GUNC as Proposition \ref{pro2.16} tell us.
\end{proof}

We now come to the following necessary and sufficient condition.

\begin{proposition}\label{pro2.18}
A ring $R$ is UNC if, and only if, $R$ is simultaneously GUNC and UU.
\end{proposition}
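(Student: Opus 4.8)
The plan is to exploit the elementary fact that every element of $R$ is either a unit or a non-unit, so that the UNC condition for $R$ decomposes neatly into a statement about the units and a statement about the non-units. Concretely, $R$ is UNC precisely when every unit is uniquely nil-clean \emph{and} every non-unit is uniquely nil-clean. The second half is literally the definition of GUNC (Definition~\ref{def2.1}), while the first half can be translated into the UU condition by means of the characterization of Karimi-Mansoub et al.\ \cite{8}, recalled in the introduction, which asserts that $R$ is a UU ring if and only if every unit of $R$ is uniquely nil-clean.

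For the forward implication, I would assume $R$ is UNC and read off both conclusions directly. Since every element of $R$ is uniquely nil-clean, in particular every non-invertible element is uniquely nil-clean, so $R$ is GUNC. Likewise every unit is uniquely nil-clean, and invoking \cite{8} this yields that $R$ is a UU ring.

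For the converse, I would take an arbitrary $a\in R$ and split into two cases according to whether $a$ is invertible. If $a\notin U(R)$, then $a$ is uniquely nil-clean because $R$ is GUNC. If $a\in U(R)$, then the UU hypothesis together with \cite{8} guarantees that $a$ is uniquely nil-clean. In either case $a$ is uniquely nil-clean, and as $a$ was arbitrary, $R$ is UNC.

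The substance of the argument is carried almost entirely by the cited equivalence ``UU $\Longleftrightarrow$ every unit is uniquely nil-clean''; once that is granted, the statement reduces to the observation that $R=U(R)\cup\bigl(R\setminus U(R)\bigr)$ and that a ring is UNC exactly when each of these two classes consists of uniquely nil-clean elements. I therefore do not expect a genuine obstacle here beyond correctly matching the GUNC hypothesis to the non-units and the UU hypothesis to the units.
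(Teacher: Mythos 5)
Your proof is correct and follows essentially the same route as the paper: both decompose $R$ into units and non-units and hinge on the characterization from \cite{8} that $R$ is UU exactly when every unit of $R$ is uniquely nil-clean. The only (minor) difference is in the forward direction, where the paper proves UNC $\Rightarrow$ UU directly --- writing a unit $u=e+q$ with $q\in {\rm Nil}(R)$ and using that UNC rings are abelian to force $e=1$, hence $u\in 1+{\rm Nil}(R)$ --- whereas you invoke the \cite{8} equivalence for that direction as well; both arguments are valid.
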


\begin{proof}
Given $R$ is UNC and $u\in U(R)$, so one may write that $u=e+q$, where $e\in {\rm Id}(R)$ and $q\in {\rm Nil}(R)$. Thus, $e=u-q$. But \cite[Theorem $ 5. 3. 3 $]{3} informs us that every UNC ring is abelian.
Therefore, $e\in U(R)$ and hence $e=1$. Then, $u\in 1+{\rm Nil}(R)$ and $R$ is a UU ring.

For the converse implication, it suffices to show that $R$ is UU if, and only if, every unit of $R$ is UNC. However, this has been proven in \cite[Theorem $ 2. 23 $]{8}.
\end{proof}

Recall that a ring is called {\it reduced} if it has no non-zero nilpotent elements.

\begin{lemma}\label{lem2.19}
Let $R$ be a GUNC ring and $J(R)=\{0\}$. Then, $R$ is reduced.
\end{lemma}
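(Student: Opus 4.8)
The plan is to observe that this statement is essentially immediate once the structural results already accumulated for GUNC rings are in hand. Recall that, by definition, a ring is reduced precisely when ${\rm Nil}(R)=\{0\}$; hence the entire task reduces to showing that the only nilpotent element of $R$ is $0$.

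First I would invoke Corollary~\ref{cor2.13}, which asserts that every GUNC ring satisfies the identity $J(R)={\rm Nil}(R)$. Combining this with the hypothesis $J(R)=\{0\}$ forces ${\rm Nil}(R)=\{0\}$, which is exactly the defining condition for $R$ to be reduced. So the proof is a one-line deduction from Corollary~\ref{cor2.13}, and there is no genuine computational content beyond it.

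If one prefers an argument that does not simply quote Corollary~\ref{cor2.13} verbatim, I would instead recover the underlying dichotomy directly, which is in any case how that corollary was obtained. Since $J(R)=\{0\}$, the ring coincides with its own quotient $R/J(R)$, and by the cited classification \cite[Corollary 2.11]{2} this quotient is either boolean or a division ring. In the boolean case, every element satisfies $x=x^{2}$, hence $x=x^{n}$ for all $n\geqslant 1$, so any nilpotent element is automatically $0$. In the division-ring case, $R$ has no nonzero zero-divisors and therefore no nonzero nilpotents. In either alternative ${\rm Nil}(R)=\{0\}$, so $R$ is reduced.

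The only real obstacle is bookkeeping: one must ensure that the chain of prior results on which this rests is genuinely available, namely Corollary~\ref{cor2.12} (guaranteeing that $J(R)$ is nil) and Corollary~\ref{cor2.13} (upgrading this to $J(R)={\rm Nil}(R)$), together with the boolean-or-division classification of $R/J(R)$. Once $J(R)={\rm Nil}(R)$ is known, the statement carries no further difficulty, so I expect the write-up to be very short.
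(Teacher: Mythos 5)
Your proof is correct, and it is genuinely different from --- and considerably shorter than --- the paper's own argument. The paper proves the lemma by contradiction: assuming $0\neq x\in R$ with $x^{2}=0$, it uses the fact that a GUNC ring is clean (Corollary~\ref{cor2.14}) and hence semi-potent, then invokes Levitzki's theorem to produce a non-zero idempotent $e$ with $eRe\cong {\rm M}_{2}(S)$ for a non-trivial ring $S$; since ${\rm M}_{2}(S)$ is never GUNC by Proposition~\ref{pro2.16}, this contradicts Corollary~\ref{cor2.7}. Your route instead observes that Corollary~\ref{cor2.13} (which precedes the lemma and is proved independently of it, via Corollary~\ref{cor2.12} and the boolean-or-division classification of $R/J(R)$) already gives $J(R)={\rm Nil}(R)$ for any GUNC ring, so the hypothesis $J(R)=\{0\}$ forces ${\rm Nil}(R)=\{0\}$ outright; there is no circularity, and your fallback argument (unwinding Corollary~\ref{cor2.13} through the boolean/division dichotomy) is also sound. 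What your approach buys is economy: it eliminates the appeal to semi-potency and to Levitzki's structural theorem entirely. What the paper's approach buys is a self-contained matrix-corner technique --- producing ${\rm M}_{2}(S)$ inside a non-reduced semi-potent ring --- which is a reusable tool in this area (the same device reappears in Proposition~\ref{pro3.31} and Proposition~\ref{pro3.32}), but for this particular statement it is heavier machinery than needed.
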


\begin{proof}
Assume that $x^{2}=0$, where $0\neq x\in R$. Since $R$ is GUNC, we know that $R$ is clean, and hence $R$ is semi-potent thanks to \cite[Proposition $ 2. 16 $]{2}. Thus, consulting with \cite[Theorem $ 2.1 $]{6}, there exists
$0\neq e^{2}=e\in R$ such that $eRe\cong {\rm M}_{2}(S)$ for some non-trivial ring $S$. But, in regard to Proposition \ref{pro2.16}, ${\rm M}_{2}(S)$ is {\it not} a GUNC ring. That is why, $eRe$ is {\it not} GUNC, and this contradicts Corollary \ref{cor2.7}. Finally, $R$ is reduced, as promised.
\end{proof}

Our final assertion for this section, which states an interesting criterion for a ring to be GUNC, is the following one.

\begin{proposition}\label{pro2.20}
A ring $R$ is GUNC if, and only if, all next three conditions are fulfilled:
\\
$(i)$ ${\rm Nil}(R)$ is an ideal of $R$;
\\
$(ii)$ $\frac{R}{{\rm Nil}(R)}$ is either a boolean ring, or a division ring;
\\
$(iii)$ $R$ is an abelian ring.
\end{proposition}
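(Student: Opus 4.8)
The plan is to prove the equivalence by establishing the forward direction as a sequence of already-available consequences, and then building the converse from the structural description of $R/{\rm Nil}(R)$ together with abelianness. For the forward direction, assume $R$ is GUNC. Condition $(iii)$ is immediate from Lemma~\ref{lem2.4}. For the remaining two, the key observation is that Corollary~\ref{cor2.13} gives $J(R)={\rm Nil}(R)$, so that ${\rm Nil}(R)$ is in fact the Jacobson radical; since $J(R)$ is always a two-sided ideal, condition $(i)$ follows at once. Moreover, $R/{\rm Nil}(R)=R/J(R)$, and by Corollary~\ref{cor2.11} $R$ is GUC, so I would invoke the structure theorem for GUC factor-rings cited earlier (the result \cite[Corollary~2.11]{2} already used in the proof of Corollary~\ref{cor2.13}), which says precisely that $R/J(R)$ is either boolean or a division ring. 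This yields condition $(ii)$.

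For the converse, suppose $(i)$, $(ii)$, $(iii)$ hold. Set $I={\rm Nil}(R)$, which is a nil ideal by $(i)$. My plan is to apply part $(ii)$ of Proposition~\ref{pro2.15}: it suffices to show that $R/I$ is GUNC, since $R$ is abelian by $(iii)$. So the task reduces to checking that the factor-ring $\bar R=R/{\rm Nil}(R)$ is GUNC. By hypothesis $(ii)$, $\bar R$ is either boolean or a division ring. A division ring is GUNC by Example~\ref{exa2.17}(i), so that case is done. For the boolean case, I would argue directly: a boolean ring is reduced (hence has zero nilradical) and every element is idempotent, so a non-unit $a$ satisfies $a=a+0$ with $a$ idempotent and $0$ nilpotent, and uniqueness of the idempotent follows because in a reduced ring the only nilpotent is $0$, forcing any nil-clean decomposition $a=e+q$ to have $q=0$ and $e=a$. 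Thus $\bar R$ is GUNC in both cases.

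The main technical point to handle carefully is the reduction via Proposition~\ref{pro2.15}(ii), because that proposition requires $I$ to be a \emph{nil} ideal and $R$ to be abelian, exactly the hypotheses $(i)$ and $(iii)$ supply; I must verify that ${\rm Nil}(R)$ is genuinely nil (each element nilpotent), which holds by definition of the nilradical set, and that it is an ideal, which is hypothesis $(i)$. Once the proposition applies, the whole converse collapses to the elementary GUNC-verification for boolean and division rings described above. The one subtlety worth stating explicitly is that in the boolean subcase one should note ${\rm Nil}(\bar R)=\{0\}$, so that ``uniquely nil-clean'' degenerates to ``the idempotent part is unique,'' which is automatic since every element already equals its own unique idempotent.

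I expect the only genuine obstacle to be bookkeeping rather than mathematics: ensuring that the cited external results (the GUC structure theorem for $R/J(R)$ and the characterization relating GUNC to GUC plus nil radical) are applied to the correct factor-ring, and that the identification $J(R)={\rm Nil}(R)$ from Corollary~\ref{cor2.13} is invoked so that ${\rm Nil}(R)$ inherits the ideal property from $J(R)$ in the forward direction. No new machinery beyond the lemmas and corollaries already established in this section is needed.
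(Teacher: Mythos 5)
Your proposal is correct, and the forward direction is exactly the paper's: Lemma~\ref{lem2.4} for abelianness, Corollary~\ref{cor2.13} to get ${\rm Nil}(R)=J(R)$ (hence an ideal), and Corollary~\ref{cor2.11} together with the cited GUC structure result to get that $R/J(R)$ is boolean or a division ring. The converse, however, is organized differently from the paper's. The paper only routes the \emph{division} case through Proposition~\ref{pro2.15}; in the boolean case it works directly inside $R$: from $a-a^2\in{\rm Nil}(R)$ it lifts a unique idempotent $e$ with $a-e\in{\rm Nil}(R)$ and concludes that $R$ itself is UNC (a slightly stronger conclusion than GUNC for that case). That lifting step --- existence and uniqueness of an idempotent modulo the nil ideal ${\rm Nil}(R)$, where uniqueness uses abelianness --- is left implicit in the paper. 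You instead treat both cases uniformly: verify that $\bar R=R/{\rm Nil}(R)$ is GUNC (trivially for a division ring; for a boolean ring because it is reduced, so every nil-clean decomposition degenerates to $a=a+0$), and then apply Proposition~\ref{pro2.15}(ii) once, with $I={\rm Nil}(R)$ nil by definition and an ideal by hypothesis $(i)$, and $R$ abelian by $(iii)$. This buys you a cleaner argument, since all the idempotent-lifting work is delegated to the already-proved Proposition~\ref{pro2.15} rather than redone by hand; what you give up is only the paper's explicit observation that in the boolean case $R$ is in fact UNC, not merely GUNC. Both routes are sound.
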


\begin{proof}
"$\Rightarrow$". With Corollary \ref{cor2.13}, Corollary \ref{cor2.11}, \cite[Corollary $ 2.11 $]{2} and Lemma \ref{lem2.4} at hand, all things are rather easy.
\\
"$\Leftarrow$". Firstly, letting $\frac{R}{{\rm Nil}(R)}$ is a boolean ring, we show that $R$ is UNC. To this purpose, chosen $ a\in R $, so $a-a^{2}\in {\rm Nil}(R)$. By hypothesis, there exists a unique idempotent, $e\in R$ say,
such that $q:=a-e\in {\rm Nil}(R)$. Hence, $a=e+q$. Also, write $a=f+q$, where $f\in {\rm Id}(R)$ and $q^{\prime}\in {\rm Nil}(R)$. However, treating the uniqueness, we get $e=f$. Therefore, $R$ is UNC, and so GUNC.

Now, let $\frac{R}{{\rm Nil}(R)}$ is a division ring, it is readily to see that $\frac{R}{{\rm Nil}(R)}$ is GUNC. Then, $R$ is GUNC using Proposition \ref{pro2.15}, as wanted.
\end{proof}

\section{GUNC group rings}

We know with the help of Lemma \ref{lem2.10} that GUNC rings include both UNC rings and local rings, and vice versa. In addition, there are some results on UC group rings and local group rings proved in \cite{29} and \cite{27}, respectively. Correspondingly, we can also obtain some achievements about GUNC group rings that could be of some interest and importance. To this aim, we recall that a group $G$ is a {\it $p$-group} if every element of $G$ is a power of $p$, where $p$ is a prime. Likewise, a group $G$ is called {\it locally finite} if every finitely generated subgroup is finite.

Suppose now that $G$ is an arbitrary group and $R$ is an arbitrary ring. As usual, $RG$ stands for the group ring of $G$ over $R$. The homomorphism $\varepsilon :RG\rightarrow R$, defined by $\varepsilon (\displaystyle\sum_{g\in G}a_{g}g)=\displaystyle\sum_{g\in G}a_{g}$, is called the {\it augmentation map} of $RG$ and its kernel, denoted by $\Delta (RG)$, is called the {\it augmentation ideal} of $RG$.

Our two affirmations, motivated us in writing this section, are these:

\begin{proposition}\label{pro3.1}
Let $R$ be a ring and let $G$ be a group. If $RG$ is a GUNC ring, then $R$ is GUNC, $G$ is a $p$-group and $p\in {\rm Nil}(R)$.
\end{proposition}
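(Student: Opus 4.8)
The plan is to prove three separate conclusions from the single hypothesis that $RG$ is GUNC. The overall strategy exploits the augmentation map $\varepsilon : RG \to R$ as a surjective ring homomorphism, together with the structural results already established for GUNC rings in Section~2, most importantly Corollary~\ref{cor2.12} (a GUNC ring has nil Jacobson radical), Corollary~\ref{cor2.13} ($J(R) = {\rm Nil}(R)$), and Proposition~\ref{pro2.15} (GUNC descends through nil ideals in the presence of abelianness).

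\medskip

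\emph{Step 1: $R$ is GUNC.} I would first observe that $R$ embeds in $RG$ as a subring via the inclusion of constant elements, and that $\varepsilon$ restricted to this copy of $R$ is the identity; equivalently, $R \cong RG/\Delta(RG)$. The natural route is to show $R$ inherits GUNC as a homomorphic image. Given a non-unit $a \in R$, I would check that its image (the constant element) is a non-unit in $RG$ -- this uses that $\varepsilon$ is a ring map, so a unit in $R$ would force a unit in $RG$ only in the wrong direction, hence I instead argue that if $a \in R$ is a non-unit then $a \in RG$ is a non-unit as well, because $\varepsilon(a) = a$ and a one-sided inverse in $RG$ would project to one in $R$ (invoking direct finiteness from Proposition~\ref{pro2.8} if needed). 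Then the unique nil-clean decomposition of $a$ in $RG$ must already live in $R$: applying $\varepsilon$ to any decomposition $a = e + q$ in $RG$ yields $a = \varepsilon(e) + \varepsilon(q)$ with $\varepsilon(e)$ idempotent and $\varepsilon(q)$ nilpotent in $R$, and uniqueness upstairs transfers downstairs. This is the cleanest of the three parts.

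\medskip

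\emph{Step 2: $p \in {\rm Nil}(R)$ and $G$ is a $p$-group.} For these I would work inside the augmentation ideal $\Delta(RG)$. For a group element $g$ of finite order $n$, the element $1 - g$ lies in $\Delta(RG)$ and is therefore a non-unit, so it is UNC. Since $J(RG) = {\rm Nil}(RG)$ (Corollary~\ref{cor2.13}) and $RG$ is abelian (Lemma~\ref{lem2.4}), the natural approach is to show that every $1-g$ for $g$ of finite order must be nilpotent, which forces a prime-power constraint. Concretely, if $g$ has order $n$ then $(1-g)$ is a zero divisor annihilated by $1 + g + \cdots + g^{n-1}$, and I would argue that a UNC element whose square-free behaviour under $\varepsilon$ is trivial must be nilpotent; computing in the commutative subring $R[g]$, nilpotence of $1-g$ is equivalent to $n$ being a power of a single prime $p$ with $p$ nilpotent in $R$. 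I expect the uniqueness clause to be what rigidly pins down a \emph{single} prime: if two distinct primes $q, r$ divided orders of elements, one could manufacture an idempotent (via the standard averaging/orthogonal-idempotent construction using $\sum g^i$) giving a non-unit with two genuinely distinct idempotents in its nil-clean decomposition, contradicting uniqueness. Establishing $p \in {\rm Nil}(R)$ reduces to the $g = $ order-$p$ case: $(1-g)^p \equiv 1 - g^p = 0$ modulo the ideal generated by $p$, so nilpotence of $1-g$ combined with the augmentation forces $p \cdot 1$ nilpotent.

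\medskip

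The main obstacle I anticipate is Step~2, specifically wringing the conclusion ``$G$ is a $p$-group'' out of the uniqueness hypothesis rather than mere nil-cleanness. Plain strong/uniquely nil-cleanness of $RG$ would already give that torsion elements have prime-power order with that prime nilpotent; the delicate point is ruling out two \emph{different} primes simultaneously and ruling out elements of infinite order. For infinite order, I would note that if $g$ has infinite order then $1-g$ generates a polynomial-like subring $R[g]$ in which $1-g$ is a non-nilpotent non-unit, contradicting Corollary~\ref{cor2.13} applied after passing to $\overline{RG} = RG/{\rm Nil}(RG)$, where by Proposition~\ref{pro2.20}(ii) the quotient is boolean or a division ring -- and in either case the image of $g$ is forced to be torsion of the right prime-power order. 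Assembling these local constraints into the global statement that \emph{all} elements share the single prime $p$ is the crux, and I would handle it by fixing the prime $p$ from nilpotence of $p \cdot 1$ in $R$ (which is forced once any nontrivial torsion element exists) and then showing every finite-order element's order is a $p$-power by the same quotient argument.
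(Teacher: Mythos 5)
Your Step 1 is correct, and in fact it is a cleaner, more self-contained argument than the paper's: the paper instead splits into cases via Lemma~\ref{lem2.10} ($RG$ local with nil radical, or $RG$ UNC) and obtains that $R$ is GUNC from Proposition~\ref{pro2.15} in the first case and from \cite[Corollary 5.3.5]{3} in the second. The genuine problems are in Step 2. First, your central claim that ``a UNC element whose augmentation is zero must be nilpotent'' is not justified as stated: applying $\varepsilon$ to $1-g=e+q$ only gives $\varepsilon(e)=0$, so you need $\Delta(RG)\cap {\rm Id}(RG)=\{0\}$, which is essentially what is to be proved. It does follow from the boolean/division dichotomy of Proposition~\ref{pro2.20} (in the boolean case $\bar g$ is an idempotent unit in $RG/{\rm Nil}(RG)$, so $\bar g=\bar 1$; in the division case $\overline{1-g}$ is a non-unit, since units lift modulo the nil ideal ${\rm Nil}(RG)$, hence $\overline{1-g}=0$; either way $1-g\in{\rm Nil}(RG)$), but you invoke that dichotomy only later and for a different purpose. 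Second, and more seriously, the passage from ``$1-g$ is nilpotent'' to ``the order of $g$ is a power of a prime $p$ with $p\in{\rm Nil}(R)$'' is precisely Connell's theorem, which the paper simply cites as \cite[Proposition 16]{28}; your sketch does not prove it. The one concrete argument you offer, $(1-g)^p\equiv 1-g^p \pmod{pRG}$, establishes the easy \emph{converse} (if $p$ is nilpotent then $1-g$ is nilpotent), not the direction you need. The needed direction takes real work: for $g$ of prime order $q$, expand $(1-g)^m=0$ over the $R$-basis $1,g,\dots,g^{q-1}$ to get integers $c_j$ with $c_j\cdot 1_R=0$ and $\sum_j c_j\zeta_q^j=(1-\zeta_q)^m$, then take norms in $\mathbb{Z}[\zeta_q]$ to conclude $q^m\cdot 1_R=0$, whence $(q\cdot 1_R)^m=0$.

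Third, your proposed mechanism for excluding two distinct primes is off track: the averaging idempotent $\tfrac{1}{n}\sum_i g^i$ requires $n\in U(R)$, whereas here the relevant prime is nilpotent in $R$, so $n$ is a zero divisor and the construction is unavailable. Fortunately, uniqueness is not needed at all for this point: once every prime $q$ occurring as the order of an element of $G$ satisfies $q\in{\rm Nil}(R)$, two distinct such primes $q,r$ would give $q^a\cdot 1_R=0=r^b\cdot 1_R$ with $\gcd(q^a,r^b)=1$, forcing $1_R=0$, a contradiction; this also shows every element's order is a power of the single prime $p$. Your treatment of elements of infinite order (linear independence of the powers of $g$ makes $1-g$ non-nilpotent in $R\langle g\rangle$) is fine. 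In summary: Step 1 is correct and genuinely different from the paper; Step 2 has the right skeleton (show $\Delta(RG)$ is nil, then deduce the $p$-group statement), which parallels Case (1) of the paper's proof via \cite{27} and \cite{28}, but as written it assumes the crux (Connell's result) while supporting it with a congruence that only proves the opposite implication, so the argument does not close without either citing \cite[Proposition 16]{28} or supplying the missing computation.
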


\begin{proof}
Assume $RG$ is GUNC. Then, exploiting Lemma \ref{lem2.10}, either $RG$ is local with $J(RG)$ nil, or $RG$ is an UNC ring. We consider these two possibilities in the sequel:
	
(1) If $RG$ is a local ring with $J(RG)$ nil, then \cite[Corollary]{27} guarantees that $\Delta(RG) \subseteq J(RG)$. Hence, \(\Delta(RG)\) is a nil-ideal. Therefore, with \cite[Proposition 16]{28} at hand, one deduces that $G$ is a $p$-group, where $p \in {\rm Nil}(R)$. Moreover, since $RG/\Delta(RG) \cong R$, the application of Proposition \ref{pro2.15} ensures that $R$ is a GUNC ring, as desired.
	
(2) If $RG$ is a UNC ring, then $RG$ is obviously uniquely clean. Applying \cite[Theorem 5]{29}, one infers that $G$ is a $2$-group, and employing \cite[Corollary 5.3.5]{3}, one finds that $R$ is an UNC ring. Furthermore, \cite[Proposition 3.14]{21} assures that $2 \in {\rm Nil}(R)$, as asked for.
\end{proof}

\begin{proposition}\label{pro3.2}
Let $R$ be a ring and let $G$ be a locally finite $p$-group. Then, $RG$ is GUNC if, and only if, $R$ is GUNC and $p\in {\rm Nil}(R)$.
\end{proposition}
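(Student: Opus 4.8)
The plan is to dispatch the two implications separately, with the forward direction being essentially free and the reverse direction carrying all the weight. For the forward implication, suppose $RG$ is GUNC. Since $G$ is assumed to be a (locally finite) $p$-group, Proposition \ref{pro3.1} applies verbatim and yields at once that $R$ is GUNC and that $p\in{\rm Nil}(R)$; no further argument is needed.

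The substance lies in the reverse direction, so assume now that $R$ is GUNC and $p\in{\rm Nil}(R)$. My strategy is to invoke Proposition \ref{pro2.15}(ii) applied to the ring $RG$ with respect to the two-sided ideal $I=\Delta(RG)$. Since the augmentation map realizes $RG/\Delta(RG)\cong R$, which is GUNC by hypothesis, Proposition \ref{pro2.15}(ii) will deliver that $RG$ is GUNC as soon as I establish two facts: (a) $\Delta(RG)$ is a nil ideal of $RG$, and (b) $RG$ is abelian. This route avoids the case split coming from Lemma \ref{lem2.10} and keeps the argument uniform.

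For (a), I would exploit local finiteness. Every element of $\Delta(RG)$ has finite support, hence lies in $RH$ for some finitely generated — and therefore finite — subgroup $H\leqslant G$, and in fact lies in $\Delta(RH)$ since its augmentation vanishes. It thus suffices to see that $\Delta(RH)$ is nil for each finite $p$-group $H$, and here one uses the standard group-ring fact that, when $p\in{\rm Nil}(R)$, the augmentation ideal of $RH$ is nilpotent for a finite $p$-group $H$ (for instance by induction on $|H|$ through a central subgroup of order $p$, or by citing the relevant companion result to \cite[Proposition 16]{28}). Passing to the union over all finite subgroups then shows that every element of $\Delta(RG)$ is nilpotent, i.e.\ that $\Delta(RG)$ is nil. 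I expect this to be the main obstacle, since it is the only genuinely group-ring-theoretic input and the nilpotency estimate for the finite pieces must be controlled well enough to survive the passage to the locally finite limit.

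For (b), let $e\in RG$ be an idempotent and set $\bar e=\varepsilon(e)$, which is an idempotent of $R$. As $R$ is GUNC it is abelian by Lemma \ref{lem2.4}, so $\bar e\in{\rm Z}(R)$; since coefficients from $R$ commute with the group elements, the element $\bar e\cdot 1$ is then central in $RG$. Now $g:=e-\bar e\cdot 1\in\Delta(RG)$ is nilpotent by part (a), while $e$ and $\bar e\cdot 1$ are commuting idempotents, so a short computation gives $g^{3}=g$; combined with nilpotency this forces $g=0$, whence $e=\bar e\cdot 1\in R$ and $e$ is central in $RG$. Hence every idempotent of $RG$ is central, $RG$ is abelian, and Proposition \ref{pro2.15}(ii) completes the proof.
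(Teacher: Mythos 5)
Your proof is correct, and its substantive (reverse) half follows a genuinely different route from the paper's. The paper also extracts nilness of $\Delta(RG)$ from Connell's result \cite[Proposition 16]{28}, but then it runs the dichotomy of Lemma \ref{lem2.10}: if $R$ is local with nil $J(R)$, it cites \cite[Corollary]{27} to conclude that $RG$ is local and checks that $J(RG)$ is nil (via $\varepsilon(J(RG))\subseteq J(R)$ together with nilness of $\Delta(RG)$), while if $R$ is UNC, it uses \cite[Proposition 3.14]{21} to force $p=2$ and then quotes \cite[Theorem 12]{29} to get that $RG$ is UNC. You bypass this case split altogether by verifying the two hypotheses of Proposition \ref{pro2.15}(ii) for the ideal $\Delta(RG)$: nilness (the same Connell input, which you re-derive by restriction to finite subgroups; note that no uniform control of nilpotency indices is needed there, since nilness is an element-wise property, so the ``main obstacle'' you flag is actually harmless), and abelianness of $RG$, obtained from the pleasant observation that for any idempotent $e\in RG$ the element $g=e-\varepsilon(e)\cdot 1$ lies in $\Delta(RG)$, is therefore nilpotent, and satisfies $g^{3}=g$, whence $g=0$ and $e=\varepsilon(e)\cdot 1$ is central. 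Your route buys self-containedness --- beyond Connell you need neither Nicholson's local group ring theorem \cite{27} nor the Chen--Nicholson--Zhou theorem on uniquely clean group rings \cite{29} --- plus the by-product that every idempotent of $RG$ comes from $R$; the paper's route is shorter modulo its citations and yields the sharper structural conclusion that $RG$ itself is either local with nil radical or UNC, mirroring the dichotomy of Lemma \ref{lem2.10}. The forward direction is handled identically in both: it is immediate from Proposition \ref{pro3.1}.
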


\begin{proof}
Assume $R$ is a GUNC ring and $p \in {\rm Nil}(R)$. Then, utilizing \cite[Proposition 16]{28}, one derives that $\Delta(RG)$ is a nil-ideal. Furthermore, Lemma \ref{lem2.10} insures that either $R$ is local with nil Jacobson radical, or $R$ is an UNC ring.
	
If, firstly, $R$ is local with nil Jacobson radical, then \cite[Corollary]{27} is a guarantor that $RG$ is local. Thus, $\varepsilon(J(RG)) \subseteq J(R)$, because $\varepsilon$ is an onto ring homomorphism. Since $J(R)$ is nil, for every $f \in J(RG)$, there exists $k \in \mathbb{N}$ such that $f^k \in \Delta(RG)$. Likewise, since $\Delta(RG)$ is nil, we have $f$ is nil. Hence, $RG$ is a local ring with nil Jacobson radical, and forcing Lemma \ref{lem2.10}, we conclude that $RG$ is a GUNC ring.
	
If now $R$ is an UNC ring, then \cite[Proposition 3.14]{21} yields that $2 \in {\rm Nil}(R)$, and so $p = 2$. Therefore, \cite[Theorem 12]{29} reflects to get that $RG$ is an UNC ring, as pursued.
\end{proof}

\section{GSNC rings}

We begin here with the following key concept.

\begin{definition}\label{def3.1}
A ring $R$ is called {\it generalized strongly nil-clean} (or just {\it GSNC} for short) if every non-invertible element in $R$ is strongly nil-clean.	
\end{definition}

We now have the following diagram:

\begin{center}
\begin{tikzpicture}
\node[draw=cyan,fill=cyan!25,minimum width=2cm,minimum height=1cm,text width=1.75cm,align=center]  (a) {GUNC};
\node[draw=cyan,fill=cyan!25,minimum width=2cm,minimum height=1cm,text width=2.5cm,align=center,right=of a](b){GSNC};
\node[draw=cyan,fill=cyan!25,minimum width=2cm,minimum height=1cm,text width=1.75cm,align=center,below=of b](c){Strongly nil-clean};
\node[draw=cyan,fill=cyan!25,minimum width=2cm,minimum height=1cm,text width=1.75cm,align=center,right=of b](g){Strongly $\pi$- regular};
\node[draw=cyan,fill=cyan!25,minimum width=2cm,minimum height=1cm,text width=1.75cm,align=center,right=of g](d){Strongly clean};
\node[draw=cyan,fill=cyan!25,minimum width=2cm,minimum height=1cm,text width=1.75cm,align=center,below=of a](f){Uniquely nil-clean};
\draw[-stealth] (a) -- (b);
\draw[-stealth] (c) -- (b);
\draw[-stealth] (f) -- (a);
\draw[-stealth] (f) -- (c);
\draw[-stealth] (b) -- (g);
\draw[-stealth] (g) -- (d);
\end{tikzpicture}
\end{center}

We continue here with a series of technicalities as follows.

\begin{lemma}\label{lem3.2}
Every GSNC ring is strongly clean.
\end{lemma}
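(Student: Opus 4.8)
The plan is to reduce everything to a single element-wise fact: an invertible element is trivially strongly clean, while a strongly nil-clean element is automatically strongly clean. Since in a GSNC ring every non-invertible element is strongly nil-clean by hypothesis, these two observations together will show that \emph{every} element of $R$ admits a strongly clean decomposition, which is exactly what we want. So I would fix an arbitrary $a \in R$ and split into the two cases $a \in U(R)$ and $a \notin U(R)$.

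The unit case is immediate. If $a \in U(R)$, then choosing the idempotent $e = 0$ gives $a - 0 = a \in U(R)$ together with the trivial commutation $a \cdot 0 = 0 \cdot a$, so $a$ is strongly clean. Thus all the real content lies in the non-invertible case.

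Now suppose $a \notin U(R)$. By the GSNC hypothesis, $a$ is strongly nil-clean, so I may write $a = e + b$ with $e \in {\rm Id}(R)$, $b \in {\rm Nil}(R)$, and $eb = be$. The key move is to pass to the complementary idempotent $f = 1 - e$ and to set $u = a - f = 2e - 1 + b$. Since $(2e-1)^2 = 4e - 4e + 1 = 1$, the element $2e-1$ is its own inverse and hence a unit; moreover $b$ commutes with $e$, hence with $2e-1$. Factoring $u = (2e-1)\bigl(1 + (2e-1)b\bigr)$ then exhibits $u$ as the product of the unit $2e-1$ with $1 + (2e-1)b$, and the latter is a unit because $(2e-1)b$ is a nilpotent (a unit times a commuting nilpotent). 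Therefore $u \in U(R)$. Finally, $f = 1 - e$ commutes with both $e$ and $b$, so it commutes with $u = 2e - 1 + b$ and with $a = e + b$; hence $a = f + u$ is a strongly clean decomposition.

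I expect the only genuine step to be the verification that $u = 2e - 1 + b$ is a unit commuting with $f = 1 - e$, i.e. the element-wise principle ``strongly nil-clean $\Rightarrow$ strongly clean.'' Everything else is a routine case distinction, and the identity $(2e-1)^2 = 1$ combined with the commutation $eb = be$ makes this verification immediate, so I do not anticipate any serious obstacle here.
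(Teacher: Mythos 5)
Your proof is correct and follows essentially the same route as the paper: split into the cases $a \in U(R)$ (trivially strongly clean) and $a \notin U(R)$ (strongly nil-clean by hypothesis, hence strongly clean). The only difference is that where the paper cites Diesl's result that strongly nil-clean elements are strongly clean, you prove this implication directly via the standard decomposition $a = (1-e) + (2e-1+b)$, which is exactly the argument behind that citation.
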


\begin{proof}
Let $a\in R$. Then, either $a\in U(R)$ or $a\notin U(R)$. If, firstly, $a\in U(R)$, then $a$ is a strongly clean element. Now, if $a\notin U(R)$, so $a$ is strongly nil-clean element, and hence $a$ is strongly clean by \cite[Corollary $ 3.6 $]{21}.
\end{proof}

\begin{lemma}\label{lem3.3}
Let $R_{i}$ be a ring for all $i\in I$. If $\prod^{n}_{i=1}R_{i}$ is GSNC, then each $R_{i}$ is GSNC.
\end{lemma}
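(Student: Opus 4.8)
The plan is to mimic exactly the structure of the proof of Lemma~\ref{lem2.5} for the GUNC case, since the two arguments are formally identical once one observes that the ``strongly nil-clean'' property of a single element is detected inside the product through a component that carries the potentially bad element while every other component carries the unit $1$. First I would fix an index $i$ and take an arbitrary non-invertible element $a\in R_i$; the goal is to show $a$ is strongly nil-clean in $R_i$. The key step is to lift $a$ to the element $\bar a=(1,\ldots,1,a,1,\ldots,1)\in\prod_{i=1}^{n}R_i$, with $a$ placed in the $i$-th slot and $1$ elsewhere, and to note that $\bar a$ is non-invertible in the product precisely because $a\notin U(R_i)$ (a tuple is a unit in a finite product if and only if each coordinate is a unit).

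Next, since $\prod_{i=1}^{n}R_i$ is assumed GSNC, the non-unit $\bar a$ is strongly nil-clean, so there exist an idempotent $E$ and a nilpotent $Q$ in the product with $\bar a=E+Q$ and $EQ=QE$. The second step is to pass this decomposition back to the $i$-th coordinate: writing $E=(e_1,\ldots,e_n)$ and $Q=(q_1,\ldots,q_n)$, idempotency, nilpotency, and commutativity are all computed coordinatewise, so $e_i\in{\rm Id}(R_i)$, $q_i\in{\rm Nil}(R_i)$, $e_iq_i=q_ie_i$, and $a=e_i+q_i$. Hence $a$ is a strongly nil-clean element of $R_i$. As $a$ was an arbitrary non-unit of $R_i$, the factor $R_i$ is GSNC, and since $i$ was arbitrary, every $R_i$ is GSNC.

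I do not expect any genuine obstacle here: the entire content is the elementary observation that all three defining conditions of strong nil-cleanness (being idempotent, being nilpotent, and commuting) are preserved and reflected under projection onto a direct factor, together with the characterization of units in a finite direct product. The only point that deserves a sentence of care is the nilpotency of $Q$, since in a \emph{finite} product a coordinatewise-nilpotent tuple is nilpotent (take the maximum of the individual nilpotency indices) and conversely a nilpotent tuple has nilpotent coordinates; this is exactly why the hypothesis restricts to a finite product $\prod_{i=1}^{n}R_i$ rather than an arbitrary one. Accordingly, the write-up can be kept very short, essentially parallel to Lemma~\ref{lem2.5}, and one may legitimately omit the routine verifications with the remark that they are componentwise.
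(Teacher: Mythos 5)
Your proposal is correct and takes essentially the same approach as the paper: lift the non-unit $a$ to $(1,\ldots,1,a,1,\ldots,1)$, invoke the GSNC hypothesis on the product, and pull the strongly nil-clean decomposition back to the $i$-th coordinate. In fact, your explicit coordinatewise projection of $E$ and $Q$ spells out the step the paper handles only by a brief contrapositive remark, so nothing further is needed.
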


\begin{proof}
Let $a_{i}\in R_{i}$, where $a_{i}\notin U(R_{i})$, whence $ (1,1,\ldots,a_{i},1,\ldots,1)\notin  U(\prod^{n}_{i=1}R_{i}) $. So, $(1,1,\ldots,a_{i},1,\ldots,1)$ is strongly nil-clean, and hence $a_{i}$ is strongly nil-clean. If, however, $a_{i}$ is not strongly nil clean, we clearly conclude that $(1,1,\ldots,a_{i},1,\ldots,1)$
is not strongly nil-clean and this is a contradiction.
\end{proof}

We note that the converse of Lemma \ref{lem3.3} is manifestly false. For example, $\mathbb{Z}_{3}$ is GSNC. But the direct product $\mathbb{Z}_{3}\times\mathbb{Z}_{3}$ is {\it not} GSNC, because the element $(2,0)$ is {\it not} invertible in $\mathbb{Z}_{3}\times\mathbb{Z}_{3}$ and $(0,2)$ is really not strongly nil- clean element.

\begin{lemma}\label{lem3.4}
If $R$ is GSNC, then $J(R)$ is nil.
\end{lemma}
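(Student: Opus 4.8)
The plan is to show that $J(R)$ is nil by establishing that every element of $J(R)$ is non-invertible and then exploiting the strongly nil-clean structure forced on such elements. First I would take an arbitrary $j \in J(R)$. Since $J(R)$ contains no units (indeed $1 - j$ is a unit for every $j \in J(R)$, so $j$ itself cannot be a unit unless $R$ is trivial), we have $j \notin U(R)$. By the GSNC hypothesis, $j$ is therefore strongly nil-clean, so there exist a commuting pair $e \in {\rm Id}(R)$ and $q \in {\rm Nil}(R)$ with $j = e + q$ and $eq = qe$.

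The key step is to force the idempotent part $e$ to vanish. Since $j \in J(R)$ and $q$ is nilpotent (hence $q \in J(R)$ as well, because ${\rm Nil}(R) \cap {\rm Z}(R)$-type elements and more generally nilpotents lie in $J(R)$ when they are central or when we pass to the image), I would write $e = j - q$. The cleanest route is to reduce modulo $J(R)$: since $j \in J(R)$, the image $\bar{e} = \bar{j} - \bar{q} = -\bar{q}$ in $\bar{R} = R/J(R)$ is simultaneously idempotent and nilpotent, and in any ring an element that is both idempotent and nilpotent must be zero. Hence $\bar{e} = \bar{0}$, which means $e \in J(R)$. But an idempotent lying in the Jacobson radical must be zero, because $J(R)$ contains no nonzero idempotents (if $e \in J(R)$ with $e^2 = e$, then $1 - e$ is a unit, forcing $e = 0$). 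Therefore $e = 0$ and $j = q \in {\rm Nil}(R)$.

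This shows $J(R) \subseteq {\rm Nil}(R)$, i.e. every element of the Jacobson radical is nilpotent, which is exactly the assertion that $J(R)$ is nil. I would then conclude the proof.

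The main obstacle, I expect, is handling the step $\bar{q} \in {\rm Nil}(\bar{R})$ cleanly, namely verifying that the nilpotent $q$ remains nilpotent in the quotient (which is automatic, since the image of a nilpotent is nilpotent) and that the argument "idempotent $=$ nilpotent $\Rightarrow$ zero" is applied in the correct ring. In fact one can avoid the quotient entirely: from $j = e + q$ with $e, q$ commuting, one gets $e = j - q$; multiplying through and using that $e$ commutes with the nilpotent $q$, a direct computation shows $e$ is a nilpotent idempotent in $R$ itself (since $e = e^n = (j-q)^n$ lands in $J(R) + {\rm Nil}(R) \subseteq J(R)$ for $R$ abelian, using Lemma~\ref{lem2.4}-style centrality), whence $e = 0$. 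Either route is short; the only genuine care needed is the commutativity bookkeeping ensuring $e$ and $q$ behave well together, but the strongly nil-clean hypothesis $eq = qe$ supplies exactly this.
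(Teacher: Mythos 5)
Your main argument is correct and follows the same skeleton as the paper's proof: take $j \in J(R)$, note $j \notin U(R)$, decompose $j = e + q$ with $e \in {\rm Id}(R)$, $q \in {\rm Nil}(R)$, and force $e = 0$. The difference lies in how $e$ is killed. The paper does it in one line inside $R$ itself: $1 - e = (1+q) - j$, where $1+q \in U(R)$ because $q$ is nilpotent, so $1 - e \in U(R) + J(R) \subseteq U(R)$; an invertible idempotent equals $1$, whence $e = 0$. You instead pass to $\bar{R} = R/J(R)$, observe that $\bar{e} = -\bar{q}$ is simultaneously idempotent and nilpotent hence zero, and then lift back using the fact that $J(R)$ contains no nonzero idempotents. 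Both mechanisms are valid; the paper's is shorter and avoids the quotient, while yours makes the two standard facts it relies on explicit. (Note that neither route actually uses the commutation $eq = qe$, so the lemma already holds for rings whose non-units are merely nil-clean.)

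Two of your side remarks are wrong, although neither is load-bearing. First, the parenthetical claim that $q \in {\rm Nil}(R)$ forces $q \in J(R)$ is false in general noncommutative rings: nilpotents need not lie in the Jacobson radical. Your quotient argument never uses this, so no harm is done, but it should be deleted. Second, the proposed quotient-free alternative at the end does not work as stated: it appeals to Lemma~\ref{lem2.4}-style centrality of idempotents, but that lemma concerns GUNC rings, and a GSNC ring need not be abelian (for instance, ${\rm M}_2(\mathbb{Z}_2)$ is GSNC by Example~\ref{exm3.10} and is not abelian); moreover, the containment $J(R) + {\rm Nil}(R) \subseteq J(R)$ asserted there is false in general. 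Strike both remarks and keep the quotient argument, which stands on its own.
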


\begin{proof}
If we have $a\in J(R)$, then $a\notin U(R)$, so $a = e + q$, where $e = e^2 \in R$, $q \in {\rm Nil}(R)$ and $eq = qe$. Therefore, $$1 - e = (1 + q) - a \in U(R) + J(R) \subseteq U(R),$$ implying $e = 0$. This, in turn, implies that $a = q \in {\rm Nil}(R)$.
\end{proof}

\begin{proposition}\label{pro3.5}
Let $R$ be a ring, and let $a\in R$. Then, the following are equivalent:
\\
(i) $R$ is a GSNC ring.
\\
(ii) For any $a\in R$, where $a\notin U(R)$, $a-a^{2}\in {\rm Nil}(R)$.
\\
(iii) For any $a\in R$, where $a\notin U(R)$, there exists $x\in R$ such that $a-ax\in {\rm Nil}(R)$, $ax=xa$
and $x=x^{2}a$.
\end{proposition}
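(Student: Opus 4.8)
The plan is to prove the three statements equivalent by establishing the cycle (i) $\Rightarrow$ (ii) $\Rightarrow$ (iii) $\Rightarrow$ (i). The implication (i) $\Rightarrow$ (ii) will be a direct computation: if $a\notin U(R)$ is strongly nil-clean, write $a=e+q$ with $e^{2}=e$, $q\in{\rm Nil}(R)$ and $eq=qe$; expanding $a-a^{2}=(e+q)-(e+q)^{2}$ and using the commutativity $eq=qe$ gives $a-a^{2}=q(1-2e-q)$, a product of the nilpotent element $q$ with an element commuting with it, hence $a-a^{2}\in{\rm Nil}(R)$.

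The heart of the argument is (ii) $\Rightarrow$ (iii). Here I would exploit the fact that, although $R$ need not be commutative, the subring $\mathbb{Z}[a]$ generated by a single element $a$ \emph{is} commutative. Given $a\notin U(R)$ with $w:=a-a^{2}\in{\rm Nil}(R)$, the principal ideal $(w)$ of $\mathbb{Z}[a]$ is a nil ideal, and the image of $a$ in $\mathbb{Z}[a]/(w)$ is idempotent. Since idempotents lift modulo a nil ideal and, in the commutative ring $\mathbb{Z}[a]$, the lift may be taken to be a polynomial in $a$, I obtain an idempotent $e\in\mathbb{Z}[a]$ with $q:=a-e\in{\rm Nil}(R)$ and $eq=qe$. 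The element $1+q$ is then a unit lying in $\mathbb{Z}[a]$ (its inverse being $\sum_{k}(-q)^{k}$), and I would set $x:=e(1+q)^{-1}$. Because $e$, $q$, $x$ all lie in the commutative ring $\mathbb{Z}[a]$, the identity $ax=xa$ is automatic; the relation $ae=(e+q)e=e(1+q)$ gives $ax=e$, whence $a-ax=q\in{\rm Nil}(R)$; and $x^{2}a=x(xa)=xe=x$ completes the verification of (iii).

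For (iii) $\Rightarrow$ (i), let $a\notin U(R)$ and take $x$ as in (iii); I would show that $e:=ax=xa$ is the required idempotent. Using $x=x^{2}a$ one computes $e^{2}=(xa)(xa)=x(ax)a=x(xa)a=x^{2}a^{2}=(x^{2}a)a=xa=e$, and the relation $ax=xa$ gives $ae=a^{2}x=xa^{2}=ea$, so $e$ is an idempotent commuting with $a$. Then $q:=a-e=a-ax\in{\rm Nil}(R)$ by hypothesis, and $q$ commutes with $e$ since both commute with $a$; thus $a=e+q$ is a strongly nil-clean decomposition, and as $a$ was an arbitrary non-unit, $R$ is GSNC.

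The main obstacle is the lifting step inside (ii) $\Rightarrow$ (iii): one must pass from the bare hypothesis that $a-a^{2}$ is nilpotent to an actual idempotent $e$ that commutes with $a$ and satisfies $a-e\in{\rm Nil}(R)$. The key observation that renders this routine is that $a$ lives in the commutative subring $\mathbb{Z}[a]$, so that classical commutative idempotent-lifting applies and the resulting $e$ automatically commutes with $a$; once $e$ is in hand, the explicit choice $x=e(1+q)^{-1}$ and the remaining identities are purely computational.
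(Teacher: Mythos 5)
Your proposal is correct, and it matches the paper in approach: the paper's ``proof'' is only the remark that the argument is similar to Theorem 5.1.1 of Chen and Sheibani's book, and that theorem's proof (for strongly nil-clean rings) rests on exactly the device you use --- lifting the idempotent inside the commutative subring $\mathbb{Z}[a]$ when $a-a^{2}$ is nilpotent, so that the idempotent automatically commutes with $a$. In effect you have written out, restricted to non-units, the standard argument the paper leaves to the citation, and all three implications in your cycle check out.
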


\begin{proof}
The proof is similar to \cite[Theorem $ 5. 1. 1 $]{3}.
\end{proof}

\begin{corollary}\label{cor3.6}
Every GSNC ring is strongly $\pi$-regular.
\end{corollary}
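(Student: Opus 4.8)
The plan is to reduce the statement to the elementary description of strong $\pi$-regularity: an element $a\in R$ is strongly $\pi$-regular precisely when there is an integer $n\geq 1$ and an element $b$ \emph{commuting} with $a$ such that $a^{n}=a^{n+1}b$ (since $b$ then commutes with $a$, we automatically also get $a^{n}=ba^{n+1}$, so both the left and the right containment $a^{n}\in a^{n+1}R\cap Ra^{n+1}$ hold). I would verify this for an arbitrary $a\in R$ by distinguishing whether or not $a$ is a unit.

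If $a\in U(R)$, then $a$ is strongly $\pi$-regular for the trivial reason that $a=a^{2}a^{-1}$ with $a^{-1}$ commuting with $a$; here $n=1$ and $b=a^{-1}$. This case needs separate (though immediate) treatment, since Proposition~\ref{pro3.5} only speaks about non-units.

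The substantive case is $a\notin U(R)$. Here I would feed $a$ into Proposition~\ref{pro3.5}, whose equivalent condition $(ii)$ gives $a-a^{2}\in{\rm Nil}(R)$. Writing $q:=a-a^{2}=a(1-a)=(1-a)a$ and choosing $m$ with $q^{m}=0$, I would use that $a$ and $1-a$ commute to obtain $a^{m}(1-a)^{m}=q^{m}=0$. Expanding $(1-a)^{m}$ by the binomial theorem and factoring out a copy of $a$ from all but the constant term, one writes $(1-a)^{m}=1-a\,g(a)$ for a suitable polynomial $g$ (with integer coefficients) in $a$. Multiplying the relation $a^{m}(1-a)^{m}=0$ out then yields $a^{m}=a^{m+1}g(a)$; since $g(a)$ is a polynomial in $a$ it commutes with $a$, so $a$ is strongly $\pi$-regular with $n=m$ and $b=g(a)$.

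I do not anticipate a serious obstacle. The only points requiring care are the purely formal check that $(1-a)^{m}$ has the shape $1-a\,g(a)$, so that one factor of $a$ can be extracted to raise the exponent from $m$ to $m+1$, and the observation that a polynomial in $a$ commutes with $a$, which is exactly what promotes the one-sided identity $a^{m}=a^{m+1}g(a)$ to the two-sided strong $\pi$-regularity condition. Everything else follows directly from Proposition~\ref{pro3.5}, so the corollary drops out once these elementary manipulations are recorded.
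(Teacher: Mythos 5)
Your proof is correct and takes essentially the same route as the paper: split into the unit and non-unit cases, apply Proposition~\ref{pro3.5} to the non-unit $a$ to get $a-a^{2}\in{\rm Nil}(R)$, and expand $(a-a^{2})^{k}=0$ to obtain $a^{k}=a^{k+1}r$. You are in fact slightly more careful than the paper's own proof, which leaves implicit both the binomial-expansion step and the observation that $r$ can be taken to be a polynomial in $a$ (hence commuting with $a$, which is what makes the regularity \emph{strong}).
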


\begin{proof}
Let $R$ be a GSNC ring. Choose $a \in R$. If $a \in U(R)$, then $a$ is a strongly $\pi$-regular element. If, however, $a \notin U(R)$, then, by Proposition \ref{pro3.5}, there exists $k \in \mathbb{N}$ such that $(a-a^2)^k = 0$, so we have $a^k = a^{k+1}r$ for some $r \in R$. Thus, $R$ is a strongly $\pi$-regular ring, as claimed.
\end{proof}

\begin{proposition}\label{pro3.7}
(i) For any nil-ideal $I \subseteq R$, $R$ is GSNC if, and only if, $R/I$ is GSNC.

(ii) A ring $R$ is GSNC if, and only if, $J(R)$ is nil and $R/J(R)$ is GSNC.

(iii) The direct product $\prod^{n}_{i=1}R_{i}$ is GSNC if, and only if, each $ R_{i} $ is strongly nil-clean.
\end{proposition}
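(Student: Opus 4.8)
The plan is to lean throughout on the characterization recorded in Proposition~\ref{pro3.5}, namely that $R$ is GSNC precisely when every non-unit $a$ satisfies $a-a^2\in{\rm Nil}(R)$. This reduces parts (i) and (ii) to tracking how non-invertibility and nilpotency behave under a quotient map, which is far easier to control than strongly nil-clean decompositions themselves.

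For part (i), I would first record the two transfer facts attached to a nil-ideal $I$. Since every nil-ideal is contained in $J(R)$, units lift both ways: $a\in U(R)$ if and only if $\bar a\in U(R/I)$, where the nontrivial direction uses that $ab-1,ba-1\in I\subseteq J(R)$ forces $ab,ba\in 1+J(R)\subseteq U(R)$, so $a$ is both right- and left-invertible. Likewise $a\in{\rm Nil}(R)$ if and only if $\bar a\in{\rm Nil}(R/I)$, because $\bar a^{\,k}=\bar 0$ gives $a^k\in I$ and nilpotence of $I$ then yields $a^{km}=0$. Granting these, the equivalence is immediate from Proposition~\ref{pro3.5}: if $\bar a$ is a non-unit in $R/I$ then $a$ is a non-unit in $R$, so $a-a^2\in{\rm Nil}(R)$ and hence $\overline{a-a^2}=\bar a-\bar a^2\in{\rm Nil}(R/I)$; conversely a non-unit $a$ in $R$ yields a non-unit $\bar a$, so $\bar a-\bar a^2\in{\rm Nil}(R/I)$, i.e. $(a-a^2)^k\in I$, whence $a-a^2\in{\rm Nil}(R)$.

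Part (ii) is then essentially a corollary. If $R$ is GSNC, Lemma~\ref{lem3.4} gives that $J(R)$ is nil, so $J(R)$ is a nil-ideal and part (i) makes $R/J(R)$ GSNC. Conversely, if $J(R)$ is nil and $R/J(R)$ is GSNC, then applying part (i) with $I=J(R)$ returns GSNC for $R$. I expect no difficulty here beyond invoking the previous items in the correct order.

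The substantive part is (iii), and the decisive point is that the right-hand condition is \emph{strongly nil-clean} (a statement about \emph{all} elements) rather than merely GSNC. For the ``if'' direction I would observe that a finite product of strongly nil-clean rings is again strongly nil-clean: decomposing each $a_i=e_i+b_i$ coordinatewise gives $a=(e_i)_i+(b_i)_i$ with $(e_i)_i$ idempotent, $(b_i)_i$ nilpotent (take the maximum of the finitely many nilpotency indices), and the two commuting; a strongly nil-clean ring is trivially GSNC. The ``only if'' direction is where the real work lies, and it forces the reading $n\ge 2$ with each $R_i\ne 0$. Given an \emph{arbitrary} $u\in R_i$—including a unit—I would form the vector equal to $u$ in coordinate $i$ and equal to $0$ in some coordinate $j\ne i$ (and $1$ elsewhere); since $0\notin U(R_j)$ this vector is a non-unit of the product, hence strongly nil-clean by hypothesis, and reading off coordinate $i$ shows $u$ is strongly nil-clean in $R_i$. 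Thus every element of $R_i$, not just its non-units, is strongly nil-clean. The main obstacle is exactly this step: one must manufacture, for each fixed $i$, a genuine non-unit of the product whose $i$-th coordinate is a prescribed arbitrary element, and this is precisely what collapses when $n=1$ (indeed $\mathbb{Z}_3$ is GSNC but not strongly nil-clean), so the statement must be read with $n\ge 2$ and all factors nonzero.
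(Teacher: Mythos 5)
Your proposal is correct and follows essentially the same route as the paper: parts (i) and (ii) via the characterization $a-a^2\in{\rm Nil}(R)$ of Proposition~\ref{pro3.5} together with Lemma~\ref{lem3.4}, and part (iii) by placing an arbitrary element of $R_i$ into a vector that is forced to be a non-unit of the product (the paper pads with zeros and argues by contradiction; you pad with a single $0$ and $1$'s elsewhere, which is the same device). Your explicit justification that non-units pass to non-units modulo a nil ideal, and your remark that (iii) must be read with $n\ge 2$ and all $R_i\ne 0$ (otherwise $\mathbb{Z}_3$ is a counterexample), are points the paper leaves implicit.
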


\begin{proof}
(i) Assume $R$ is a GSNC ring and $\overline{R} := R/I$, where $\bar{a} \notin U(\overline{R})$. Then, $a \notin U(R)$, which insures, in view of Proposition \ref{pro3.5}, that $a - a^2 \in {\rm Nil}(R)$, so $\bar{a} - \bar{a}^2 \in {\rm Nil}(\overline{R})$.

Conversely, suppose $\overline{R}$ is a GSNC ring. If $a \notin U(R)$, then $\bar{a} \notin U(\overline{R})$, and Proposition \ref{pro3.5} yields that $\bar{a} - \bar{a}^2 \in {\rm Nil}(\overline{R})$. Therefore, there exists $k \in \mathbb{N}$ such that $(a - a^2)^k \subseteq I \subseteq {\rm Nil}(R)$.

(ii) Using Lemma \ref{lem3.4} and part (i) of the proof, the proof is clear.

(iii) Letting each $ R_{i} $ be strongly nil-clean, then $ \prod^{n}_{i=1}R_{i} $ is strongly nil-clean employing \cite[Proposition $ 3. 13 $]{21}. Hence, $ \prod^{n}_{i=1}R_{i}$ is GSNC.

Conversely, assume that $\prod^{n}_{i=1}R_{i}$ is GSNC and that $R_{j}$ is not strongly nil-clean for some index
$1\leqslant j\leqslant n$. Then, there exists $a\in R_{j}$ which is not strongly nil-clean. Consequently,
$(0,\ldots,0,a,0,\ldots,0)$ is not strongly nil-clean in $\prod^{n}_{i=1}R_{i}$. But, it is readily seen that
$(0,\ldots,0,a,0,\ldots,0)$ is not invertible in $\prod^{n}_{i=1}R_{i}$ and thus, by hypothesis,
$(0,\ldots,0,a,0,\ldots,0)$  is not strongly nil-clean, a contradiction. Therefore, each $ R_{i} $ is strongly nil-clean, as needed.
\end{proof}

As a consequence, we derive:

\begin{corollary}\label{cor3.8}
Let $R$ be a ring and $0\neq e\in {\rm Id}(R)$. If $R$ is GSNC, then so is $eRe$.
\end{corollary}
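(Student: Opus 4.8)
The plan is to reduce everything to the elementwise criterion furnished by Proposition~\ref{pro3.5}, which says that a ring $S$ is GSNC precisely when every non-unit $a \in S$ satisfies $a - a^2 \in {\rm Nil}(S)$. I would apply this criterion to the corner ring $S = eRe$, keeping in mind that the identity of $eRe$ is $e$ (not $1$) and that its unit group $U(eRe)$ is computed with respect to $e$.

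First I would dispose of the trivial case $e = 1$, where $eRe = R$ and there is nothing to prove, and assume henceforth that $e \neq 1$. The crucial preliminary observation is that, once $e \neq 1$, no element of $eRe$ can be invertible in $R$. Indeed, any $a \in eRe$ satisfies $ea = ae = a$; so if $a$ had a two-sided inverse $b$ in $R$, then $1 = ab = (ea)b = e(ab) = e$, contradicting $e \neq 1$. Thus no element of $eRe$ lies in $U(R)$.

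Next, I would take an arbitrary $a \in eRe$ (in particular, any element that is a non-unit in $eRe$). By the preceding paragraph $a \notin U(R)$, and since $R$ is GSNC, Proposition~\ref{pro3.5} yields $a - a^2 \in {\rm Nil}(R)$. Now $a - a^2$ lies in $eRe$, and nilpotency is absolute: if $(a-a^2)^m = 0$ holds in $R$, then the same identity holds verbatim in $eRe$, because the two rings share the same multiplication. Hence $a - a^2 \in {\rm Nil}(eRe)$.

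Finally, I would invoke Proposition~\ref{pro3.5} in the reverse direction for the ring $eRe$: we have just shown that every element of $eRe$ (and so, a fortiori, every non-unit of $eRe$) satisfies $a - a^2 \in {\rm Nil}(eRe)$, which is exactly condition (ii) of Proposition~\ref{pro3.5} for the ring $eRe$. Therefore $eRe$ is GSNC, as desired. I do not anticipate a genuine obstacle here; the only point demanding care is bookkeeping about identities and unit groups, namely applying Proposition~\ref{pro3.5} to $eRe$ with its own identity $e$. It is worth noting that the idempotent occurring in a strongly nil-clean decomposition of $a$ inside $R$ need not lie in $eRe$, and it is precisely to sidestep this difficulty that one uses the polynomial criterion $a - a^2 \in {\rm Nil}$, which stays inside the corner, rather than the decomposition itself.
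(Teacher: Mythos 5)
Your proof is correct and takes essentially the same route as the paper: both reduce to the criterion of Proposition~\ref{pro3.5}, show the relevant element is a non-unit of $R$, and transfer the nilpotency of $a-a^2$ from $R$ down to the corner $eRe$. The only difference is cosmetic — the paper argues per element that $a\in U(R)$ would force $ebe$ to be an inverse of $a$ inside $eRe$, while you split off the trivial case $e=1$ and observe that for $e\neq 1$ no element of $eRe$ at all is a unit of $R$ (which in fact shows the proper corner satisfies the criterion for every element, i.e., is strongly nil-clean).
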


\begin{proof}
Assuming $a \in eRe \setminus U(eRe)$, we have $a = ea = ae = eae$. If $a \in U(R)$, then there exists $b \in R$ such that $ab = ba = 1$, which implies $a(ebe) = (ebe)a = e$, leading to a contradiction. Therefore, $a \notin U(R)$. Hence, by Proposition \ref{pro3.5}, we have $a - a^2 \in {\rm Nil}(R) \cap eRe \subseteq {\rm Nil}(eRe)$, as required.
\end{proof}

We now possess all the machinery necessary to establish the following assertion.

\begin{theorem}\label{thm3.9}
For any ring $S \neq 0$ and any integer $n \ge 3$, the ring ${\rm M}_n(S)$ is not GSNC.
\end{theorem}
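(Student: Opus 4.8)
The plan is to reduce the statement to a structural obstruction that GSNC rings cannot accommodate, namely the presence of a matrix ring $\mathrm{M}_m(S)$ with $m \ge 3$ inside a corner. The key leverage is \textbf{Corollary~\ref{cor3.8}}, which tells us that GSNC passes to corners $eRe$ for any nonzero idempotent $e$, together with \textbf{Proposition~\ref{pro3.5}}, which characterizes GSNC by the condition that every non-unit $a$ satisfies $a - a^2 \in \mathrm{Nil}(R)$. So to show $\mathrm{M}_n(S)$ is not GSNC, it suffices to exhibit a single non-invertible matrix $A \in \mathrm{M}_n(S)$ for which $A - A^2$ fails to be nilpotent.

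First I would work directly with an explicit non-unit and compute. The most natural candidate exploits that $n \ge 3$ gives us room for a shift-type or permutation-type matrix. I would try $A$ of the form of a cyclic permutation matrix on the first three coordinates — that is, the matrix sending $e_1 \mapsto e_2 \mapsto e_3 \mapsto e_1$ realized as a $3\times 3$ block $P = \left(\begin{smallmatrix} 0 & 0 & 1 \\ 1 & 0 & 0 \\ 0 & 1 & 0 \end{smallmatrix}\right)$, padded with an identity (or zero) block on the remaining $n-3$ coordinates. This $A$ is in fact a unit, so instead I would take a genuinely singular construction: a matrix unit combination such as $A = E_{12} + E_{23}$ (entries equal to $1_S$), which is nilpotent with $A - A^2 = A - E_{13}$; here $A-A^2$ is itself nilpotent, so this does not work. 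The productive choice is to make $A - A^2$ an idempotent-like matrix that is visibly non-nilpotent while keeping $A$ a non-unit. A clean option is $A = E_{11} + E_{12} + E_{23}$ or, better, to pick $A$ with a nonzero trace pattern so that $A - A^2$ has a nonzero entry on a cycle forcing a nonzero power.

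The cleanest route, which I expect to be the intended one, is to reduce to showing $\mathrm{M}_n(S)$ contains a non-unit that is \emph{not even} nil-clean-decomposable in the strong sense, by restricting to a $3\times 3$ corner. Since $n \ge 3$, the idempotent $e = E_{11} + E_{22} + E_{33}$ gives $eRe \cong \mathrm{M}_3(S)$, and by Corollary~\ref{cor3.8} it is enough to prove $\mathrm{M}_3(S)$ is not GSNC. Within $\mathrm{M}_3(S)$ I would produce a non-unit $A$ with $A - A^2$ non-nilpotent; concretely, a matrix whose square-minus-itself contains a unipotent-type cycle. For instance, using that a strongly nil-clean (hence strongly clean) matrix ring would force every non-unit to be strongly $\pi$-regular with the $a-a^2$ nilpotent constraint, one exhibits a companion-type matrix of a polynomial like $x^2(x-1)$ over the prime subring, whose relevant power never vanishes.

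The main obstacle, and where care is needed, is that $S$ is an arbitrary nonzero ring, possibly noncommutative and without good element-level invertibility, so I cannot rely on determinants or eigenvalues. Thus the non-nilpotency of $A - A^2$ must be witnessed purely combinatorially, by pointing to a specific entry (or diagonal block) of every power $(A-A^2)^k$ that equals $1_S \neq 0$. I would therefore choose $A$ so that $A - A^2$ has, up to conjugation by a permutation, a nonzero idempotent block (the simplest being a single $E_{ii}$ surviving), since an idempotent is manifestly non-nilpotent in any nonzero ring. Verifying that such an $A$ is simultaneously a non-unit in $\mathrm{M}_3(S)$ — which for the matrix-unit constructions follows because $A$ has a zero row or column and hence cannot be right/left invertible over any $S$ — is the routine but essential bookkeeping that completes the argument, after which Proposition~\ref{pro3.5} delivers the conclusion.
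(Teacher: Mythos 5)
Your scaffolding is exactly the paper's: reduce to $\mathrm{M}_3(S)$ via the corner idempotent $e=E_{11}+E_{22}+E_{33}$ and Corollary~\ref{cor3.8}, then invoke Proposition~\ref{pro3.5} so that everything hinges on exhibiting one non-unit $A$ with $A-A^2$ non-nilpotent, witnessed entrywise so the computation survives over an arbitrary nonzero (possibly noncommutative) $S$. The genuine gap is that you never produce such an $A$, and both concrete candidates you offer actually fail. For $A=E_{11}+E_{12}+E_{23}$ one computes
$$A-A^2=\begin{pmatrix} 0&0&-1\\ 0&0&1\\ 0&0&0\end{pmatrix},\qquad (A-A^2)^2=0,$$
so this "clean option" is useless. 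The companion-matrix idea fails for a structural reason: if $A$ is a root of $x^2(x-1)$, then $A^2(A-1)=0$, and since $A$ commutes with $1-A$,
$$(A-A^2)^2=A^2(1-A)^2=\bigl(A^2(A-1)\bigr)(A-1)=0,$$
so \emph{any} matrix annihilated by $x^2(x-1)$ has $A-A^2$ nilpotent; more generally, no polynomial whose roots lie in $\{0,1\}$ can ever produce a witness. What you correctly identify only in the abstract --- that $A-A^2$ should be, up to sign, a nonzero idempotent --- requires a block whose ``eigenvalues'' avoid $0$ and $1$, enforced by an integer polynomial identity.

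The paper's witness does precisely this. Take
$$A=\begin{pmatrix} 1&1&0\\ 1&0&0\\ 0&0&0\end{pmatrix}\in\mathrm{M}_3(S),$$
a non-unit (zero row), whose upper-left block $B=\begin{pmatrix}1&1\\ 1&0\end{pmatrix}$ satisfies the integer identity $B^2=B+I_2$ (Cayley--Hamilton for $x^2-x-1$, whose roots are the golden ratios, neither $0$ nor $1$). Hence $A-A^2=\mathrm{diag}(-1,-1,0)$, whose square $\mathrm{diag}(1,1,0)$ is a nonzero idempotent in $\mathrm{M}_3(S)$ for every ring $S\neq 0$; thus $A-A^2$ is not nilpotent and Proposition~\ref{pro3.5} finishes the argument. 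So your plan is sound and coincides with the paper's, but without an explicit matrix of this kind (or an equally explicit verification) the proof as written does not close.
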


\begin{proof}
Since it is well known that ${\rm M}_3(S)$ is isomorphic to a corner ring of ${\rm M}_n(S)$ (for $n \ge 3$), it suffices to show that ${\rm M}_3(S)$ is {\it not} a GSNC ring by virtue of Corollary \ref{cor3.8}. To this target, consider the matrix
$$A =\begin{pmatrix}
        1 & 1 & 0 \\
        1 & 0 & 0 \\
        0 & 0 & 0
\end{pmatrix} \notin U({\rm M}_3(S)).$$ But a plain inspection gives us that $$A - A^2 \notin {\rm Nil}({\rm M}_3(S)),$$ as expected. Therefore, Proposition \ref{pro3.5} guarantees that $R$ cannot be a GSNC ring, as asserted.
\end{proof}

The following is worthwhile to be recorded.

\begin{example}\label{exm3.10}
The ring ${\rm M}_2(\mathbb{Z}_2)$ is a GSNC ring but is {\it not} strongly nil-clean. In fact, according to \cite[Theorem 2.7]{5}, for any arbitrary ring $R$, the ring ${\rm M}_2(R)$ cannot be strongly nil-clean. However, from the obvious equality $${\rm M}_2(\mathbb{Z}_2) = U({\rm M}_2(\mathbb{Z}_2)) \cup {rm Id}({\rm M}_2(\mathbb{Z}_2)) \cup {\rm Nil}({\rm M}_2(\mathbb{Z}_2)),$$, it is evident that ${\it M}_2(\mathbb{Z}_2)$ is a GSNC ring in conjunction with Proposition \ref{pro3.5}.
\end{example}

\begin{corollary}\label{cor3.11}
The ring ${\rm M}_2(\mathbb{Z}_{2^k})$ is a GSNC ring for each $k \in \mathbb{N}$
\end{corollary}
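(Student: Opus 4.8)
The plan is to reduce the claim about $\mathrm{M}_2(\mathbb{Z}_{2^k})$ to the base case $\mathrm{M}_2(\mathbb{Z}_2)$ handled in Example~\ref{exm3.10}, using the nil-ideal machinery already available in Proposition~\ref{pro3.7}(i). The key observation is that $2$ is nilpotent in $\mathbb{Z}_{2^k}$, so the ideal $I = 2\mathbb{Z}_{2^k}$ is a nil-ideal of $\mathbb{Z}_{2^k}$; consequently the ideal $\mathrm{M}_2(I)$ of $\mathrm{M}_2(\mathbb{Z}_{2^k})$ consisting of matrices with entries in $I$ should be a nil-ideal of $\mathrm{M}_2(\mathbb{Z}_{2^k})$. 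First I would verify this nilpotency: since $I$ is nilpotent as an ideal (indeed $I^k = 0$ in $\mathbb{Z}_{2^k}$), the matrix ideal $\mathrm{M}_2(I)$ is likewise nilpotent, because a product of sufficiently many matrices with entries drawn from a nilpotent ideal lands in a higher power of that ideal. Hence $\mathrm{M}_2(I)$ is nil.

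Next I would identify the quotient. The natural surjection $\mathbb{Z}_{2^k} \twoheadrightarrow \mathbb{Z}_{2^k}/I \cong \mathbb{Z}_2$ induces a ring isomorphism
\begin{equation*}
\frac{\mathrm{M}_2(\mathbb{Z}_{2^k})}{\mathrm{M}_2(I)} \cong \mathrm{M}_2\!\left(\frac{\mathbb{Z}_{2^k}}{I}\right) \cong \mathrm{M}_2(\mathbb{Z}_2).
\end{equation*}
With this identification in place, Example~\ref{exm3.10} tells us that the quotient $\mathrm{M}_2(\mathbb{Z}_{2^k})/\mathrm{M}_2(I) \cong \mathrm{M}_2(\mathbb{Z}_2)$ is GSNC. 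Now I would invoke Proposition~\ref{pro3.7}(i): for any nil-ideal $I'$ of a ring $R$, the ring $R$ is GSNC if and only if $R/I'$ is GSNC. Taking $R = \mathrm{M}_2(\mathbb{Z}_{2^k})$ and $I' = \mathrm{M}_2(I)$, the fact that the quotient is GSNC forces $\mathrm{M}_2(\mathbb{Z}_{2^k})$ itself to be GSNC, completing the argument.

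The main obstacle, such as it is, lies in the two bookkeeping facts that make the reduction legitimate: first, that $\mathrm{M}_2(I)$ really is a nil-ideal (not merely a nil set), and second, that the quotient is genuinely isomorphic to $\mathrm{M}_2(\mathbb{Z}_2)$ rather than some larger ring. The first is the more delicate point, since nilpotency of an ideal in a matrix ring requires tracking how powers of $I$ accumulate under matrix multiplication; the clean way to see it is that $\mathrm{M}_2(I)^m \subseteq \mathrm{M}_2(I^{\lceil m/?\rceil})$ for appropriate exponents, so a fixed power of $\mathrm{M}_2(I)$ vanishes once $I^k = 0$. The isomorphism of quotients is the standard fact that $\mathrm{M}_2(-)$ commutes with quotients by ideals, i.e. $\mathrm{M}_2(R)/\mathrm{M}_2(J) \cong \mathrm{M}_2(R/J)$ for any ideal $J \trianglelefteq R$. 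Once both are in hand, the corollary is immediate, and indeed the whole proof is just one sentence citing Example~\ref{exm3.10} together with Proposition~\ref{pro3.7}(i).
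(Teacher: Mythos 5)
Your proposal is correct and takes essentially the same route as the paper: Corollary~\ref{cor3.11} is stated there without proof, but it is exactly the case $R=\mathbb{Z}_{2^k}$ of Theorem~\ref{thm3.12}, whose proof in the paper performs precisely your reduction --- factor ${\rm M}_2(R)$ by the nil ideal ${\rm M}_2({\rm Nil}(R))$ (here ${\rm M}_2(2\mathbb{Z}_{2^k})$) to reach ${\rm M}_2(\mathbb{Z}_2)$, then cite Example~\ref{exm3.10} and Proposition~\ref{pro3.7}(i). The only tidying needed is your nilpotency bookkeeping: one has ${\rm M}_2(I)^m \subseteq {\rm M}_2(I^m)$ outright, since each entry of a product of $m$ matrices with entries in $I$ is a sum of products of $m$ elements of $I$, so ${\rm M}_2(I)^k = 0$ with no mysterious exponent needed.
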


Our next chief statement is:

\begin{theorem}\label{thm3.12}
Let R be a $2$-primal, local and strongly nil-clean ring. Then, ${\rm M}_2(R)$ is a GSNC ring.
\end{theorem}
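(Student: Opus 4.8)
The plan is to apply the reduction modulo the Jacobson radical furnished by Proposition~\ref{pro3.7}(ii): to prove that ${\rm M}_2(R)$ is GSNC it suffices to verify that $J({\rm M}_2(R))$ is nil and that the quotient ${\rm M}_2(R)/J({\rm M}_2(R))$ is GSNC. Since $J({\rm M}_2(R))={\rm M}_2(J(R))$ and ${\rm M}_2(R)/{\rm M}_2(J(R))\cong {\rm M}_2(R/J(R))$, both requirements can be expressed entirely in terms of the quotient $R/J(R)$ and the ideal $J(R)$.

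First I would pin down $R/J(R)$. Because $R$ is strongly nil-clean, $R/J(R)$ is Boolean and $J(R)$ is nil (the standard characterization of strongly nil-clean rings). As $R$ is moreover local, $R/J(R)$ is a division ring; the only Boolean division ring is $\mathbb{Z}_2$, so $R/J(R)\cong \mathbb{Z}_2$. Consequently ${\rm M}_2(R)/J({\rm M}_2(R))\cong {\rm M}_2(\mathbb{Z}_2)$, and Example~\ref{exm3.10} already tells us that ${\rm M}_2(\mathbb{Z}_2)$ is GSNC. This disposes of the second requirement.

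The remaining, and genuinely delicate, point is to show that ${\rm M}_2(J(R))$ is a nil ideal; this is exactly where the $2$-primal hypothesis enters, and I expect it to be the main obstacle. Nilness of $J(R)$ alone does not guarantee nilness of ${\rm M}_2(J(R))$, this being the shadow of K\"othe's problem. To get around it, I would first observe that $J(R)={\rm Nil}(R)$: indeed $J(R)\subseteq {\rm Nil}(R)$ since $J(R)$ is nil, while the reverse inclusion holds because $R/J(R)\cong\mathbb{Z}_2$ is reduced, so every nilpotent of $R$ lies in $J(R)$. Since $R$ is $2$-primal, ${\rm Nil}(R)$ coincides with the prime radical ${\rm Nil}_{*}(R)$. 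Hence $J(R)={\rm Nil}_{*}(R)$, and invoking the fact that the prime radical commutes with matrix rings, namely ${\rm Nil}_{*}({\rm M}_2(R))={\rm M}_2({\rm Nil}_{*}(R))$, we conclude that ${\rm M}_2(J(R))={\rm Nil}_{*}({\rm M}_2(R))$ is nil, being the prime radical of ${\rm M}_2(R)$.

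With both hypotheses of Proposition~\ref{pro3.7}(ii) verified, namely that $J({\rm M}_2(R))={\rm M}_2(J(R))$ is nil and that ${\rm M}_2(R)/J({\rm M}_2(R))\cong{\rm M}_2(\mathbb{Z}_2)$ is GSNC, the proposition yields that ${\rm M}_2(R)$ is GSNC, completing the argument. As a sanity check, one could alternatively verify the criterion of Proposition~\ref{pro3.5} directly: a matrix $A$ is non-invertible in ${\rm M}_2(R)$ precisely when its image $\bar A$ is non-invertible in ${\rm M}_2(\mathbb{Z}_2)$, since units lift modulo the nil radical, and then $A-A^2$ is nilpotent because $\bar A-\bar A^2$ is nilpotent in the GSNC ring ${\rm M}_2(\mathbb{Z}_2)$ and nilpotence lifts modulo the nil ideal ${\rm M}_2(J(R))$.
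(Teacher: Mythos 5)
Your proposal is correct and follows essentially the same route as the paper: both identify $R/J(R)\cong\mathbb{Z}_2$ from the local and strongly nil-clean hypotheses, invoke Example~\ref{exm3.10} for ${\rm M}_2(\mathbb{Z}_2)$, and use $2$-primality to rewrite ${\rm M}_2(J(R))={\rm M}_2({\rm Nil}_{*}(R))={\rm Nil}_{*}({\rm M}_2(R))$, thereby obtaining a nil ideal to which Proposition~\ref{pro3.7} applies. The only cosmetic difference is that you route the reduction through part (ii) of that proposition rather than part (i), and you spell out the K\"othe-type obstruction that the $2$-primal hypothesis is there to overcome, which the paper leaves implicit.
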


\begin{proof}
Since $R$ is simultaneously local and strongly nil-clean, we can write that $R/J(R) \cong \mathbb{Z}_2$, so Example \ref{exm3.10} informs us that ${\rm M}_2(R/J(R))$ is a GSNC ring. On the other hand, since $R$ is both $2$-primal and strongly nil-clean, we may write $J(R) = {\rm Nil}(R) = {\rm Nil}_{\ast}(R)$, so that from this we extract that
$${\rm M}_2(R/J(R)) = {\rm M}_2(R)/{\rm M}_2(J(R)) = {\rm M}_2(R)/{\rm M}_2({\rm Nil}_{\ast}(R)) = {\rm M}_2(R)/{\rm Nil}_{\ast}({\rm M}_2(R)),$$
and, moreover, as ${\rm Nil}_{\ast}({\rm M}_2(R))$ is a nil-ideal, Proposition \ref{pro3.7}(i) forces that ${\rm M}_2(R)$ is a GSNC ring, as inferred.
\end{proof}

In the above theorem, the condition of being local is {\it not} redundant. In order to demonstrate that, supposing $R = \mathbb{Z}_2 \times \mathbb{Z}_2$, then $R$ is $2$-primal and strongly nil-clean, but definitely ${\rm M}_2(R)$ is {\it not} a GSNC ring as a plain check shows.

Besides, it is impossible to interchange the condition of being strongly nil-clean with GSNC in the above theorem. Indeed, to illustrate that such a replacement is really {\it not} possible, assuming $R = \mathbb{Z}_3$, then $R$ is a $2$-primal, GSNC, and local ring, but an easy verification shows that ${\rm M}_2(R)$ is {\it not} a GSNC ring.

\medskip

Our next series of technical claims is like this.

\begin{lemma}\label{lem3.13}
Let ${\rm M}_2(R)$ be a GSNC ring. Then, $R$ is a strongly nil-clean ring.
\end{lemma}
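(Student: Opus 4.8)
The plan is to reduce everything to the standard structural description of strongly nil-clean rings, namely that a ring is strongly nil-clean precisely when its Jacobson radical is nil and the corresponding factor-ring is Boolean (see, e.g., \cite{21}); so the whole argument amounts to extracting these two conditions from the single hypothesis that ${\rm M}_2(R)$ is GSNC, by testing it on well-chosen non-invertible matrices. First, for an arbitrary $a\in R$ the matrix $\left(\begin{smallmatrix} a & 0 \\ 0 & 0\end{smallmatrix}\right)$ is never a unit of ${\rm M}_2(R)$, so Proposition \ref{pro3.5} forces $\left(\begin{smallmatrix} a-a^2 & 0 \\ 0 & 0\end{smallmatrix}\right)\in{\rm Nil}({\rm M}_2(R))$ and hence $a-a^2\in{\rm Nil}(R)$ for \emph{every} $a\in R$ (units included). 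Second, since $J({\rm M}_2(R))={\rm M}_2(J(R))$ is nil by Lemma \ref{lem3.4}, restricting to the $(1,1)$-corner shows that $J(R)$ is nil.

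It then remains to prove that $S:=R/J(R)$ is Boolean. Here $J(S)=0$, and because ${\rm M}_2(J(R))$ is a nil ideal with ${\rm M}_2(R)/{\rm M}_2(J(R))\cong{\rm M}_2(S)$, Proposition \ref{pro3.7}(i) yields that ${\rm M}_2(S)$ is again GSNC. Identifying $S$ with the corner $e\,{\rm M}_2(S)\,e$ for $e=\left(\begin{smallmatrix}1&0\\0&0\end{smallmatrix}\right)$, Corollary \ref{cor3.8} gives that $S$ itself is GSNC, whence $S$ is clean by Lemma \ref{lem3.2} and therefore semi-potent (cf. \cite[Proposition 2.16]{2}). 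Moreover the corner trick above, now applied inside ${\rm M}_2(S)$, gives $s-s^2\in{\rm Nil}(S)$ for all $s\in S$. Consequently, once $S$ is known to be reduced it follows that $s=s^2$ for every $s$, i.e. $S$ is Boolean; so the task collapses to proving $S$ reduced.

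For the reducedness, suppose for contradiction that $S$ is not reduced, say $0\neq x\in S$ with $x^2=0$. Since $S$ is semi-potent, \cite[Theorem 2.1]{6} (used exactly as in Lemma \ref{lem2.19}) supplies a nonzero idempotent $e\in S$ with $eSe\cong{\rm M}_2(V)$ for some nonzero ring $V$. Then $f:=\left(\begin{smallmatrix} e & 0 \\ 0 & e\end{smallmatrix}\right)$ is a nonzero idempotent of ${\rm M}_2(S)$ with $f\,{\rm M}_2(S)\,f\cong{\rm M}_2(eSe)\cong{\rm M}_2({\rm M}_2(V))\cong{\rm M}_4(V)$. By Corollary \ref{cor3.8} this corner inherits the GSNC property, so ${\rm M}_4(V)$ would be GSNC, contradicting Theorem \ref{thm3.9} (with $n=4\ge 3$ and $V\neq 0$). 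Hence $S$ is reduced, $S$ is Boolean, and combining this with the nilness of $J(R)$ gives that $R$ is strongly nil-clean.

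I expect this last step to be the genuine obstacle. The weaker hypothesis that $R$ alone is GSNC does \emph{not} force $R/J(R)$ to be reduced (for instance ${\rm M}_2(\mathbb{Z}_2)$ is GSNC by Example \ref{exm3.10} yet is not reduced), so the proof must essentially use that ${\rm M}_2(R)$, and not merely $R$, is GSNC. The decisive idea is to promote a forbidden ${\rm M}_2(V)$-corner sitting inside $S$ to a forbidden ${\rm M}_4(V)$-corner sitting inside ${\rm M}_2(S)$, precisely the regime where Theorem \ref{thm3.9} applies; everything else is routine bookkeeping with corners and nil ideals.
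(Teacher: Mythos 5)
Your proof is correct, but it takes a much longer route than the paper's, and your first paragraph already \emph{is} the paper's entire proof. The paper tests the non-unit ${\rm diag}(a,0)$ against Proposition \ref{pro3.5} exactly as you do, obtains $a-a^2\in{\rm Nil}(R)$ for every $a\in R$ (units included, which is the only place where the hypothesis on ${\rm M}_2(R)$ rather than on $R$ is needed), and then simply quotes \cite[Theorem 5.1.1]{3}: a ring is strongly nil-clean if and only if $a-a^2$ is nilpotent for \emph{all} of its elements. You instead aim at the other standard characterization (nil Jacobson radical plus Boolean quotient), which forces you to prove that $S=R/J(R)$ is reduced; your argument for this is sound and stays entirely within the paper's toolkit (Lemma \ref{lem3.4}, Proposition \ref{pro3.7}(i), Corollary \ref{cor3.8}, Levitzki's theorem used as in Lemma \ref{lem2.19}), and the promotion of a forbidden ${\rm M}_2(V)$-corner of $S$ to an ${\rm M}_4(V)$-corner of ${\rm M}_2(S)$, where Theorem \ref{thm3.9} finally bites, is a genuinely nice device in the same spirit as Propositions \ref{pro3.31} and \ref{pro3.32}. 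What your route buys is independence from the element-wise criterion of \cite[Theorem 5.1.1]{3}; what it costs is that the last two thirds of your proof are logically redundant once that criterion is granted. Two small corrections: the radical/Boolean characterization you invoke is not in \cite{21} but is rather \cite[Theorem 5.1.5]{3} (or \cite{23}), the very result the paper leans on in Lemma \ref{lem3.22}; and your closing diagnosis that reducedness of $R/J(R)$ is ``the genuine obstacle'' is an artifact of your chosen characterization --- in the paper's argument no such obstacle arises, the ${\rm M}_2$ hypothesis serving only to make ${\rm diag}(a,0)$ a non-unit for every $a$, so that nilpotency of $a-a^2$ is not restricted to non-units.
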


\begin{proof}
Let us assume that $a \in R$. Then, one sees that $$A=\begin{pmatrix}
a & 0 \\ 0 & 0
\end{pmatrix} \notin U({\rm M}_2(R)).$$ Thus, referring to Proposition \ref{pro3.5}, we have $A-A^2 \in {\rm Nil}({\rm M}_2(R))$ and, as a result, $a - a^2 \in {\rm Nil}(R)$. Therefore, consulting with \cite[Theorem 5.1.1]{3}, we can conclude that $R$ is a strongly nil-clean ring.
\end{proof}

\begin{example}\label{exa3.14}
If $R$ is a local ring with nil $J(R)$, then $R$ is GSNC.
\end{example}

\begin{proof}
Given $a\in R$ and $a\notin U(R)$. Since $R$ is local, $a\in J(R)$ and hence $a\in {\rm Nil}(R)$. So, $a$ is a nilpotent element, and thus it is strongly nil-clean, as stated.
\end{proof}

\begin{lemma}\label{lem3.15}
Let $R$ be a ring. Then, the following points are equivalent for a semi-local ring $R$:
\\
(i) $R$ is a GSNC ring.
\\
(ii) $R$ is either a local ring with nil $J(R)$, or a strongly nil-clean ring, or $R/J(R)={\rm M}_2(\mathbb{Z}_2)$  with $J(R)$ nil.
\end{lemma}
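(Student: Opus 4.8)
I want to prove the equivalence for a semi-local ring $R$, i.e.\ a ring with $R/J(R)$ semisimple Artinian. The whole strategy is to pass to the semisimple quotient $\bar R := R/J(R)$ and use the structure theory there, combined with the descent results already available: Proposition \ref{pro3.7}(ii), which says $R$ is GSNC iff $J(R)$ is nil and $\bar R$ is GSNC, together with Lemma \ref{lem3.4} and Corollary \ref{cor3.8}.

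\medskip

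\emph{Direction (ii) $\Rightarrow$ (i).} This is the easy direction. If $R$ is local with $J(R)$ nil, Example \ref{exa3.14} gives GSNC directly. If $R$ is strongly nil-clean, then every element---in particular every non-unit---is strongly nil-clean, so $R$ is trivially GSNC. If $R/J(R) \cong {\rm M}_2(\mathbb{Z}_2)$ with $J(R)$ nil, then Example \ref{exm3.10} says ${\rm M}_2(\mathbb{Z}_2)$ is GSNC, and since $J(R)$ is nil, Proposition \ref{pro3.7}(ii) lifts this to conclude $R$ is GSNC.

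\medskip

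\emph{Direction (i) $\Rightarrow$ (ii).} This is the substantive direction. Assume $R$ is GSNC and semi-local. First, Lemma \ref{lem3.4} gives that $J(R)$ is nil, and by Proposition \ref{pro3.7}(ii) the quotient $\bar R = R/J(R)$ is again GSNC; moreover $\bar R$ is semisimple Artinian, so by Wedderburn--Artin $\bar R \cong \prod_{i=1}^{t} {\rm M}_{n_i}(D_i)$ for division rings $D_i$. The plan is now to pin down exactly which products of matrix rings over division rings can themselves be GSNC. Two hard constraints come from earlier results: Theorem \ref{thm3.9} forces every $n_i \le 2$ (no factor ${\rm M}_{n}(D)$ with $n\ge 3$ can be GSNC, and a corner of a GSNC ring is GSNC by Corollary \ref{cor3.8}), and Lemma \ref{lem3.3} forces \emph{each} factor ${\rm M}_{n_i}(D_i)$ to be GSNC on its own. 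So I am reduced to the single-factor analysis: which of $D$ (a division ring) and ${\rm M}_2(D)$ are GSNC, and then which \emph{products} of such factors remain GSNC.

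\medskip

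\emph{The single-factor and product analysis---the main obstacle.} For a single division ring $D$, GSNC holds iff $D$ is strongly nil-clean, which for a division ring forces $D \cong \mathbb{Z}_2$ (the only nontrivial nilpotent is $0$, so strongly nil-clean means every element is idempotent, i.e.\ $D$ is boolean and a division ring, hence $\mathbb{Z}_2$); a general division ring $D$ is still GSNC vacuously only when... here I must be careful, since $D$ has no nonzero non-units, so \emph{every} division ring is GSNC as a stand-alone ring---this is the subtlety. The real obstruction appears when I take products. Using Proposition \ref{pro3.7}(iii), a product $\prod_{i=1}^t {\rm M}_{n_i}(D_i)$ with $t \ge 2$ is GSNC iff \emph{each} factor is \emph{strongly} nil-clean (not merely GSNC), because in a product with at least two factors the element $(0,\dots,0,a,0,\dots,0)$ is always a non-unit and must be strongly nil-clean, which by projecting forces each factor to be strongly nil-clean. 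A matrix ring ${\rm M}_2(D_i)$ is never strongly nil-clean (by the ${\rm M}_2$ result cited in Example \ref{exm3.10}), and a division ring $D_i$ is strongly nil-clean iff $D_i \cong \mathbb{Z}_2$; hence if $t\ge 2$ every factor must be $\mathbb{Z}_2$, giving $\bar R \cong \mathbb{Z}_2^{\,t}$, which makes $\bar R$ boolean and $R$ strongly nil-clean. If instead $t = 1$, then $\bar R$ is a single factor: either $\bar R = D$ is a division ring (so $R$ is local, and with $J(R)$ nil we land in the ``local with $J(R)$ nil'' case), or $\bar R \cong {\rm M}_2(D)$; in the latter case the GSNC condition on ${\rm M}_2(D)$ via Lemma \ref{lem3.13} forces $D$ to be strongly nil-clean, hence $D \cong \mathbb{Z}_2$, giving $\bar R \cong {\rm M}_2(\mathbb{Z}_2)$ with $J(R)$ nil---the third case. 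I expect the delicate bookkeeping to be exactly this split between ``single factor'' and ``multiple factors,'' where the jump from GSNC to the strictly stronger strongly-nil-clean requirement on each factor (forced by Proposition \ref{pro3.7}(iii)) is what collapses the multi-factor case down to a boolean quotient and thus to a strongly nil-clean $R$.
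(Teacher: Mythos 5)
Your proposal is correct and follows essentially the same route as the paper's proof: pass to $\bar R=R/J(R)$ via Lemma \ref{lem3.4} and Proposition \ref{pro3.7}(ii), apply Wedderburn--Artin, rule out $n_i\ge 3$ by Theorem \ref{thm3.9} with Corollary \ref{cor3.8}, handle the single-factor case via Lemma \ref{lem3.13} (forcing $D\cong\mathbb{Z}_2$ when $\bar R\cong{\rm M}_2(D)$), and collapse the multi-factor case to a boolean quotient via Proposition \ref{pro3.7}(iii). In fact your write-up is slightly more complete than the paper's, which omits the $R/J(R)\cong{\rm M}_2(\mathbb{Z}_2)$ case in the (ii)~$\Rightarrow$~(i) direction and leaves the $D\cong\mathbb{Z}_2$ pin-down implicit.
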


\begin{proof}
(i) $\Rightarrow$ (ii). Applying Proposition \ref{pro3.7}(ii), we have that $R/J(R)$ is a GSNC ring. Since $R$ is a semi-local ring, we write $R/J(R) = \prod_{i=1}^{m} {\rm M}_{n_i}(D_i)$, where each component ${\rm M}_{n_i}(D_i)$ is a matrix ring over a division ring $D_i$. If, for a moment, $m=1$, then combining Example \ref{exm3.10} and Theorem \ref{thm3.9}, we infer that either $R/J(R) = D_1$ or $R/J(R) = {\rm M}_2(\mathbb{Z}_2)$.

If, however, $m>1$, then employing Proposition \ref{pro3.7}(iii), each of the rings ${\rm M}_n(D)$ must be strongly nil-clean, which means that, for any $i$, the equality ${\rm M}_{n_i}(D_i) = \mathbb{Z}_2$ holds.
\\
(ii) $\Rightarrow$ (i). If $R$ is a strongly nil-clean ring, it is apparent that $R$ is GSNC. If now $R$ is local with nil $J(R)$, then $R$ has to be GSNC invoking Example
\ref{exa3.14}.
\end{proof}

\begin{corollary}\label{cor3.16}
Let $R$ be a ring. Then, the following items are equivalent for a semi-simple ring $R$:
\\
(i) $R$ is a GSNC ring.\\
(ii) $R$ is either a division ring, or a strongly nil-clean ring, or $R={\rm M}_2(\mathbb{Z}_2)$.
\end{corollary}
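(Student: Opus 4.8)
The plan is to obtain this as a direct specialization of Lemma~\ref{lem3.15} to the case $J(R)=\{0\}$. First I would record the standard fact that a semi-simple ring $R$ is in particular semi-local (indeed $R/J(R)=R$ is already semisimple Artinian, so it is a finite product of matrix rings over division rings) and that it satisfies $J(R)=\{0\}$. Consequently Lemma~\ref{lem3.15} applies to $R$ without any further hypotheses.

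For the implication (i) $\Rightarrow$ (ii), I would assume $R$ is a GSNC semi-simple ring and invoke Lemma~\ref{lem3.15}, which forces one of its three alternatives. I would then collapse each alternative using $J(R)=\{0\}$: if $R$ is local with nil $J(R)$, then $R=R/J(R)$, and since $R$ local means $R/J(R)$ is a division ring, $R$ itself is a division ring; if $R$ is strongly nil-clean, that alternative is already in the desired list; and if $R/J(R)={\rm M}_2(\mathbb{Z}_2)$ with $J(R)$ nil, then $R=R/J(R)={\rm M}_2(\mathbb{Z}_2)$. Thus exactly the three possibilities listed in (ii) survive.

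For the reverse implication (ii) $\Rightarrow$ (i), I would verify that each of the three cases is GSNC. A division ring is GSNC because its unique non-invertible element is $0\in{\rm Nil}(R)$, which is trivially strongly nil-clean; a strongly nil-clean ring is GSNC by definition (every element, in particular every non-unit, is strongly nil-clean); and ${\rm M}_2(\mathbb{Z}_2)$ is GSNC by Example~\ref{exm3.10}.

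The argument presents no substantive obstacle, as it is purely a matter of specializing an already-proved equivalence. The only step demanding a moment of care is the reduction in the local case, where one must explicitly use the definition of a local ring (namely that $R/J(R)$ is a division ring) to identify ``local with $J(R)=\{0\}$'' with ``division ring''; the remaining two cases pass through unchanged or were handled by Example~\ref{exm3.10}.
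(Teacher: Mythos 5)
Your proposal is correct and matches the paper's approach exactly: the paper's proof of this corollary is precisely ``immediate from Lemma~\ref{lem3.15},'' and your argument is just that specialization, with the collapse of each alternative under $J(R)=\{0\}$ spelled out. No gaps; the local-ring-with-zero-radical-equals-division-ring reduction and the appeal to Example~\ref{exm3.10} for ${\rm M}_2(\mathbb{Z}_2)$ are exactly what the paper leaves implicit.
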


\begin{proof}
It is immediate by using Lemma \ref{lem3.15}.
\end{proof}	

We now proceed by proving some structural results.

\begin{proposition}\label{pronew}
Let $R$ be a ring. Then, the following two issues are equivalent for an abelian ring $R$:
\\
(i) $R$ is a GSNC ring.
\\
(ii) $R$ is either a local with nil $J(R)$, or a strongly nil-clean ring.
\end{proposition}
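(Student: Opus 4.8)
The plan is to prove the equivalence by handling each direction separately, with the forward direction being the substantive one. For the direction (ii) $\Rightarrow$ (i), there is almost nothing to do: if $R$ is strongly nil-clean then every element, in particular every non-unit, is strongly nil-clean, so $R$ is trivially GSNC; and if $R$ is local with nil $J(R)$, then Example~\ref{exa3.14} already gives that $R$ is GSNC. So this direction reduces to citing results already in hand.

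For the harder direction (i) $\Rightarrow$ (ii), I would first record the general consequences of the GSNC hypothesis that are available: by Lemma~\ref{lem3.4} we know $J(R)$ is nil, and by Proposition~\ref{pro3.7}(ii) the quotient $\bar R := R/J(R)$ is again GSNC with $J(\bar R) = 0$. The natural strategy is to split on whether $R$ is local. If $R$ is local, then since $J(R)$ is nil we land in the first alternative of (ii) and are done. So the real work is to show that if $R$ is abelian, GSNC, and \emph{not} local, then $R$ must be strongly nil-clean. The key structural fact I would exploit is that in an abelian ring idempotents are central, so any nontrivial idempotent $e$ gives a genuine ring decomposition $R = eRe \oplus (1-e)R(1-e)$, and by Corollary~\ref{cor3.8} each corner is again GSNC.

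The main obstacle, and the crux of the argument, will be to upgrade ``GSNC'' to ``strongly nil-clean'' using abelianness once $R$ fails to be local. My plan is to show that under the abelian hypothesis every unit of $R$ must already lie in $1 + {\rm Nil}(R)$, i.e.\ that $R$ is a UU ring, provided $R$ is not local; combined with the fact that every non-unit is strongly nil-clean (hence satisfies $a - a^2 \in {\rm Nil}(R)$ by Proposition~\ref{pro3.5}), this would force every element of $R$ to be strongly nil-clean. Concretely, if $R$ is not local then $\bar R = R/J(R)$ is a nonlocal abelian reduced ring that is GSNC; being abelian it has a nontrivial central idempotent, giving $\bar R = A \times B$ as a product. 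By Proposition~\ref{pro3.7}(iii) applied to this product decomposition, each factor is \emph{strongly nil-clean}; since the factors are reduced (as $J(\bar R) = 0$) and strongly nil-clean, they are boolean, so $\bar R$ is boolean and hence $R/J(R)$ is boolean with $J(R)$ nil, which is exactly the standard characterization of a strongly nil-clean ring. Lifting back through the nil ideal $J(R)$ via Proposition~\ref{pro3.7}(ii) then yields that $R$ itself is strongly nil-clean.

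The delicate point I would watch carefully is the passage ``abelian and not local $\Rightarrow$ the quotient splits as a nontrivial product,'' which requires that abelianness pass to $\bar R$ and that non-locality genuinely produces a nontrivial idempotent in $\bar R$ rather than merely in $R$; here I would use that idempotents lift modulo the nil ideal $J(R)$, so a nontrivial idempotent downstairs corresponds to one upstairs and conversely. The only subtlety is handling the boundary case where $\bar R$ is itself local: then $\bar R$ is a reduced local GSNC ring with trivial radical, hence a division ring, which places $R$ back in the ``local with nil $J(R)$'' alternative and closes the argument. I would present the proof by cleanly separating these two cases ($\bar R$ local versus $\bar R$ a nontrivial product) and invoking Proposition~\ref{pro3.7} parts (ii) and (iii) to transfer the strongly-nil-clean conclusion between $R$ and $R/J(R)$.
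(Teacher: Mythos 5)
Your proposal is correct, but it executes the key step in a different place than the paper does. The paper's proof is declared ``similar to that of Lemma~\ref{lem2.10}'': working directly in $R$, non-locality (together with cleanness, via Lemma~\ref{lem3.2}) produces a nontrivial idempotent, abelianness makes it central, so $R=eRe\oplus(1-e)R(1-e)$ is a genuine ring product; Proposition~\ref{pro3.7}(iii) then says both factors are strongly nil-clean, and a finite product of strongly nil-clean rings is strongly nil-clean by \cite[Proposition 3.13]{21}. You instead pass first to $\bar R=R/J(R)$, split $\bar R$, observe the factors are strongly nil-clean with zero radical and hence boolean, and lift back. Both routes share the same engine (central idempotent splitting plus Proposition~\ref{pro3.7}(iii)), but your detour through the quotient costs extra verifications that the paper avoids entirely --- that abelianness and non-locality descend to $\bar R$ (via idempotent lifting modulo the nil ideal $J(R)$), and that strongly nil-clean plus $J=0$ forces boolean --- while buying the sharper structural byproduct that $R/J(R)$ is boolean or a division ring. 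Two small points to fix: the final lift cannot be attributed to Proposition~\ref{pro3.7}(ii), which transfers the GSNC property, not strong nil-cleanness; what you actually need there is exactly the characterization you cite in the preceding sentence ($R$ strongly nil-clean $\Leftrightarrow$ $J(R)$ nil and $R/J(R)$ boolean, as in \cite{21} or \cite{3}). Also, the step ``not local $\Rightarrow$ nontrivial idempotent'' deserves an explicit justification (it fails for general rings, e.g.\ $\mathbb{Z}$); here it follows because GSNC rings are clean, cf.\ Proposition~\ref{pro3.24}, and the same implicit appeal occurs in the paper's Lemma~\ref{lem2.10}.
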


\begin{proof}
It is similar to that of Lemma \ref{lem2.10}.
\end{proof}

The next claim also appeared in \cite{5}, but we will give it a bit more conceptual proof.

\begin{proposition}\label{pro3.17}
A ring $R$ is strongly nil-clean if, and only if,
\\
(i) $R$ is UU;
\\
(ii) $R$ is strongly clean.
\end{proposition}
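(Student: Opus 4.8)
The plan is to prove both implications by leaning on the characterizations of UU rings recalled in the introduction together with the already-used fact (see the proof of Lemma~\ref{lem3.2}) that every strongly nil-clean element is strongly clean. The forward direction is essentially immediate; the content lies in the converse, where one reconstructs a strongly nil-clean decomposition out of a strong clean decomposition.

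For the direction $(\Rightarrow)$, suppose $R$ is strongly nil-clean. Then every unit of $R$ is in particular strongly nil-clean, and by Diesl's criterion recalled earlier (a unit $u$ is strongly nil-clean if, and only if, $u \in 1 + {\rm Nil}(R)$), this gives $U(R) \subseteq 1 + {\rm Nil}(R)$. Since $1 + {\rm Nil}(R) \subseteq U(R)$ holds in any ring (the inverse of $1+q$ with $q$ nilpotent is the finite sum $\sum_{k\geq 0}(-q)^k$), we obtain $U(R) = 1 + {\rm Nil}(R)$, so $R$ is UU, which is $(i)$. For $(ii)$, every element of $R$ is strongly nil-clean, hence strongly clean by \cite[Corollary 3.6]{21}, so $R$ is strongly clean.

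For the direction $(\Leftarrow)$, the crucial preliminary observation is that in a UU ring one has $2 \in {\rm Nil}(R)$: indeed $-1 \in U(R) = 1 + {\rm Nil}(R)$ forces $-1 = 1 + q$ for some $q \in {\rm Nil}(R)$, whence $-2 = q \in {\rm Nil}(R)$ and so $2 \in {\rm Nil}(R)$; moreover $2$ is central. Now fix an arbitrary $a \in R$. Since $R$ is strongly clean, write $a = e + u$ with $e \in {\rm Id}(R)$, $u \in U(R)$ and $eu = ue$; since $R$ is UU, $u = 1 + q$ with $q \in {\rm Nil}(R)$, and $e$ commutes with $q$ because it commutes with $u$. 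Setting $f := 1 - e$, a direct computation gives $a - f = (e + 1 + q) - (1 - e) = 2e + q$. Here $2e$ is nilpotent (as $2$ is a central nilpotent and $e$ is idempotent) and commutes with the nilpotent $q$, so $2e + q \in {\rm Nil}(R)$, while $f \in {\rm Id}(R)$ commutes with $2e + q$ thanks to $eq = qe$. Hence $a = f + (2e + q)$ is a strongly nil-clean decomposition, and $R$ is strongly nil-clean.

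The main obstacle — really the only non-routine point — is identifying the correct idempotent: one must use $1 - e$ rather than $e$, so that the remainder $a - (1-e) = 2e + q$ collapses to a nilpotent precisely because $2$ is nilpotent in the UU ring. Once that is seen, the rest is bookkeeping with commuting nilpotents and needs no further machinery.
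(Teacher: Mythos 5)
Your proof is correct, but your converse takes a genuinely different route from the paper's. The paper applies strong cleanness to the \emph{shifted} element $a+1$: writing $a+1 = e+u$ with $e\in {\rm Id}(R)$, $u\in U(R)$ and $eu=ue$, it immediately obtains $a = e + (u-1)$, and the UU hypothesis makes $u-1\in {\rm Nil}(R)$, so the strongly nil-clean decomposition appears at once with no further work. You instead decompose $a$ itself as $a = e + u = e + 1 + q$, pass to the complementary idempotent $f = 1-e$, and verify that the remainder $a - f = 2e+q$ is nilpotent; this forces you to first establish the auxiliary fact that $2 \in {\rm Nil}(R)$ in any UU ring (which you prove correctly from $-1 \in 1+{\rm Nil}(R)$), and then to carry out the commuting-nilpotent bookkeeping ($2e$ and $q$ are commuting nilpotents, and $1-e$ commutes with their sum). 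Both arguments are valid; the paper's shift trick buys brevity and avoids any appeal to the nilpotency of $2$, while your version isolates the structural point that in a UU ring a strongly clean decomposition of $a$ can be converted into a strongly nil-clean one by swapping $e$ for $1-e$. Your forward direction is essentially the paper's: the paper inlines the argument for Diesl's criterion, whereas you simply cite it (Lemma~\ref{lem3.18}) and note that strongly nil-clean elements are strongly clean.
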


\begin{proof}
"$\Rightarrow$". Let $u\in U(R)$. So, $a=e+q$, where $e\in {\rm Id}(R)$ and $q\in {\rm Nil}(R)$ with $eq=qe$. Thus,
$e=u-q$, where $uq=qu$, and hence $e\in U(R)$. Therefore, $e=1$ whence $u\in 1+{\rm Nil}(R)$. That is why, $R$ is a UU ring.
\\
"$\Leftarrow$". Let $a\in R$. Write $a+1 = e+u$, where $eu=ue$, $e\in {\rm Id}(R)$ and $u\in U(R)$. So, $a=e+(-1+u)$,
where $-1+u\in {\rm Nil}(R)$ and $e(-1+u)=(-1+u)e$, guaranteeing that $a$ is strongly nil-clean, as required.
\end{proof}

\begin{lemma}\label{lem3.18} \cite{21}
A unit $u$ of a ring $R$ is strongly nil-clean if, and only if, $u\in 1+{\rm Nil}(R)$. In particular, $R$ is a UU ring if, and only if, every unit of $R$ is strongly nil-clean.
\end{lemma}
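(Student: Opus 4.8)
The plan is to prove the first equivalence directly and then derive the ``in particular'' clause as a quick consequence. For the first equivalence, the reverse implication is immediate: if $u = 1 + q$ with $q \in {\rm Nil}(R)$, then $u = 1 + q$ already exhibits $u$ as a sum of the idempotent $1$ and the nilpotent $q$, which commute trivially, so $u$ is strongly nil-clean.

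For the forward implication, I would start from a strongly nil-clean decomposition $u = e + q$ with $e^2 = e$, $q \in {\rm Nil}(R)$ and $eq = qe$, and aim to show $e = 1$. First I would note that since $u = e + q$ and $e,q$ commute, both $e$ and $q$ commute with $u$, and hence with $u^{-1}$. Then I would rewrite $e = u - q = u(1 - u^{-1}q)$ and observe that $u^{-1}q$ is nilpotent (it commutes with $u^{-1}$, and $q$ is nilpotent), so $1 - u^{-1}q$ is a unit. Thus $e$ is a product of two units, hence itself a unit; an idempotent that is also a unit must satisfy $e = 1$, giving $u = 1 + q \in 1 + {\rm Nil}(R)$.

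The ``in particular'' statement then follows by combining the equivalence with the always-valid inclusion $1 + {\rm Nil}(R) \subseteq U(R)$. If $R$ is UU, then $U(R) = 1 + {\rm Nil}(R)$, so every unit lies in $1 + {\rm Nil}(R)$ and is therefore strongly nil-clean by the first part. Conversely, if every unit is strongly nil-clean, the first part yields $U(R) \subseteq 1 + {\rm Nil}(R)$, and the reverse inclusion holds because $1$ plus a nilpotent is always invertible; hence $U(R) = 1 + {\rm Nil}(R)$, i.e. $R$ is UU.

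The only delicate point, and the step I expect to require the most care, is the commutativity bookkeeping in the forward direction: verifying that $q$ commutes with $u^{-1}$ (so that $u^{-1}q$ is genuinely nilpotent rather than merely a product of a unit and a nilpotent) and recording the small fact that an idempotent which happens to be invertible must equal $1$. Everything else is routine.
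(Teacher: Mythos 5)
Your proof is correct, and it is essentially the same argument the paper relies on: the lemma is quoted from Diesl's paper \cite{21}, and the identical reasoning (write $u=e+q$ with $eq=qe$, observe $e=u-q$ is a unit because $u$ and $q$ commute, hence $e=1$) appears verbatim in the paper's own proof of Proposition~\ref{pro3.17}. Your factorization $e = u(1-u^{-1}q)$ and the closing remark that an invertible idempotent equals $1$ just make explicit the commutativity bookkeeping that the paper leaves implicit.
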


In our terminology alluded to above, we extract the following two assertions:

\begin{corollary}\label{cor3.19}
A ring $R$ is strongly nil-clean if, and only if,
\\
(i) $R$ is UU;
\\
(ii) $R$ is GSNC.
\end{corollary}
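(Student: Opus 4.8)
The plan is to derive this corollary directly from the characterization of strongly nil-clean rings in Proposition~\ref{pro3.17} together with the implication "GSNC $\Rightarrow$ strongly clean" recorded in Lemma~\ref{lem3.2}. No genuinely new argument is needed; the content is entirely a matter of combining these two earlier facts in the right order.

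For the forward direction, I would assume $R$ is strongly nil-clean and verify both conditions. Condition (i), that $R$ is UU, is exactly the "$\Rightarrow$" implication already proved in Proposition~\ref{pro3.17} (equivalently, it follows from Lemma~\ref{lem3.18}, since every unit of a strongly nil-clean ring is itself strongly nil-clean and hence lies in $1+{\rm Nil}(R)$). Condition (ii), that $R$ is GSNC, is immediate: if every element of $R$ is strongly nil-clean, then in particular every non-invertible element is strongly nil-clean, which is precisely Definition~\ref{def3.1}.

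For the converse, I would assume $R$ is simultaneously UU and GSNC and aim to invoke the "$\Leftarrow$" half of Proposition~\ref{pro3.17}. That proposition requires $R$ to be UU and strongly clean; the first hypothesis is given outright, and the second follows at once from Lemma~\ref{lem3.2}, which guarantees that every GSNC ring is strongly clean. Hence $R$ satisfies both hypotheses of Proposition~\ref{pro3.17} and is therefore strongly nil-clean, completing the equivalence.

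The only point requiring any care—and it is mild—is bookkeeping about which direction of Proposition~\ref{pro3.17} is being used in each half, so that the argument does not become circular; since Proposition~\ref{pro3.17} and Lemma~\ref{lem3.2} are proved independently of this corollary, there is no circularity, and there is no real obstacle to overcome beyond citing them correctly.
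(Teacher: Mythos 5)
Your proof is correct, but your converse follows a different path from the paper's. The paper disposes of the whole corollary in one line by combining Proposition~\ref{pro3.17} with Lemma~\ref{lem3.18}: for the "$\Leftarrow$" half it splits an arbitrary element into two cases, using Lemma~\ref{lem3.18} (UU means exactly that every \emph{unit} is strongly nil-clean) for the units and the GSNC hypothesis (Definition~\ref{def3.1}) for the non-units, so every element is strongly nil-clean with no further work. You instead route the converse through Lemma~\ref{lem3.2} (GSNC $\Rightarrow$ strongly clean) and then invoke the "$\Leftarrow$" direction of Proposition~\ref{pro3.17}, which internally rebuilds a strongly nil-clean decomposition of each element $a$ from a strongly clean decomposition of $a+1$. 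Both arguments are valid and non-circular, since Lemma~\ref{lem3.2}, Lemma~\ref{lem3.18} and Proposition~\ref{pro3.17} are all proved independently of the corollary. The paper's approach is the more economical one: it is a direct element-wise verification needing only the unit/non-unit dichotomy, whereas yours is slightly redundant in that Proposition~\ref{pro3.17}'s converse re-proves strong nil-cleanness even for the non-units, which the GSNC hypothesis already hands you for free; on the other hand, your version stays entirely at the level of ring-theoretic properties and never manipulates individual elements, which some may find cleaner to cite. The forward directions are identical in both treatments.
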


\begin{proof}
It follows directly from a combination of Proposition \ref{pro3.17} and Lemma \ref{lem3.18}.
\end{proof}

\begin{corollary}\label{cor3.20}
Let $R$ be a UU ring. Then, the following are equivalent:
\\
(i) $R$ is a strongly clean ring.
\\
(ii) $R$ is a strongly nil-clean ring.
\\
(iii) $R$ is a GSNC ring.
\\
(iv) $R$ is a strongly $\pi$-regular ring.
\end{corollary}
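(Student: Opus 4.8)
The plan is to prove the four equivalences by reading off the two biconditionals already established under a UU hypothesis and then appending a short implication cycle that folds in strong $\pi$-regularity. Since $R$ is UU throughout, Proposition \ref{pro3.17} — which says $R$ is strongly nil-clean if, and only if, $R$ is both UU and strongly clean — collapses to the single statement that $R$ is strongly nil-clean exactly when it is strongly clean; this yields (i) $\Leftrightarrow$ (ii) immediately. In the same way, Corollary \ref{cor3.19}, asserting that $R$ is strongly nil-clean precisely when it is simultaneously UU and GSNC, reduces under the standing UU assumption to $R$ strongly nil-clean $\Leftrightarrow$ $R$ GSNC, which is (ii) $\Leftrightarrow$ (iii). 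Thus (i), (ii), (iii) are mutually equivalent with essentially no extra work, and it remains only to splice in condition (iv).

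First I would record the implication (iii) $\Rightarrow$ (iv) directly from Corollary \ref{cor3.6}, which guarantees that every GSNC ring is strongly $\pi$-regular. To close the loop I would supply (iv) $\Rightarrow$ (i) by invoking the classical fact that every strongly $\pi$-regular ring is strongly clean — exactly the arrow already displayed in the diagram that opens this section. Chaining the arrows gives the cycle $\text{(i)} \Rightarrow \text{(ii)} \Rightarrow \text{(iii)} \Rightarrow \text{(iv)} \Rightarrow \text{(i)}$, so all four conditions coincide.

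The only step that is not a formal consequence of the paper's own earlier machinery is (iv) $\Rightarrow$ (i); every other arrow is bookkeeping around Proposition \ref{pro3.17}, Corollary \ref{cor3.19} and Corollary \ref{cor3.6}. Hence the main (indeed the sole genuine) obstacle is justifying ``strongly $\pi$-regular $\Rightarrow$ strongly clean'', which I would handle by citing Nicholson's theorem rather than reproving it; notably, the UU hypothesis is not even needed for this particular implication. The one point I would verify carefully is that the reductions of Proposition \ref{pro3.17} and Corollary \ref{cor3.19} to single biconditionals are legitimate — that is, that dropping the redundant UU clause is warranted precisely because UU is a standing hypothesis of the corollary — since a careless reading here is the only place a gap could creep in.
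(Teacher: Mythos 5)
Your proof is correct and is exactly the derivation the paper intends: the corollary is stated without proof immediately after Proposition~\ref{pro3.17}, Corollary~\ref{cor3.19} and Corollary~\ref{cor3.6}, and your chaining of those results, closed by Nicholson's theorem that strongly $\pi$-regular rings are strongly clean (the arrow already asserted in the paper's diagram), is the natural way to fill it in. Your observation that the UU hypothesis is only needed for the first two equivalences, not for (iv) $\Rightarrow$ (i), is also accurate.
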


\begin{proposition}\label{pro3.21}
A ring $R$ is GUNC if, and only if,
\\
(i) $R$ is abelian;
\\
(ii) $R$ is GSNC.
\end{proposition}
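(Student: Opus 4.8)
The plan is to treat both implications through a single observation: in an abelian ring the notions of nil-clean, strongly nil-clean and uniquely nil-clean collapse onto one another, since centrality of idempotents makes the commuting condition automatic and, simultaneously, forces uniqueness of the idempotent in any nil-clean decomposition. Thus condition (i) is exactly what bridges GSNC and GUNC.

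For the forward direction I would assume $R$ is GUNC and argue as follows. Condition (i) is immediate from Lemma \ref{lem2.4}, which already tells us that every GUNC ring is abelian. For condition (ii), take any $a\notin U(R)$; since $R$ is GUNC, $a$ is UNC, so there is an idempotent $e$ with $q:=a-e\in {\rm Nil}(R)$. Because $R$ is abelian, $e$ is central, so $eq=qe$, and hence $a=e+q$ is in fact a \emph{strongly} nil-clean decomposition. As this holds for every non-unit, $R$ is GSNC.

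For the reverse direction I would assume $R$ is abelian and GSNC, and show each $a\notin U(R)$ is UNC. Existence of a nil-clean decomposition is free: by GSNC (equivalently Proposition \ref{pro3.5}), $a$ is strongly nil-clean, hence in particular nil-clean. The substance is uniqueness. Suppose $a=e+q=f+q'$ with $e,f\in {\rm Id}(R)$ and $q,q'\in {\rm Nil}(R)$. Since $R$ is abelian, $e$ and $f$ are central. I would first record the identity $(e-f)^3=e-f$, valid for any two commuting idempotents (the same computation underlying Proposition \ref{pro2.15}). Next, writing $q=a-e$ and $q'=a-f$, centrality of $e,f$ shows that $q$ and $q'$ commute, so their difference $q'-q=e-f$ is a difference of two commuting nilpotents and is therefore itself nilpotent. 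Combining $e-f\in {\rm Nil}(R)$ with $(e-f)^3=e-f$ (which iterates to $(e-f)^{2k+1}=e-f$ for all $k$) forces $e-f=0$ upon raising to a sufficiently large odd power. Hence $e=f$, so $a$ is UNC and $R$ is GUNC.

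The only real content, and the step I expect to be the main obstacle, is this uniqueness argument in the reverse direction; its two load-bearing observations are that centrality of $e,f$ makes $q$ and $q'$ commute (so that $q'-q$ is nilpotent) and that $(e-f)^3=e-f$ for commuting idempotents. Together these pin $e-f$ to zero. Everything else, namely the forward direction and the existence half of the reverse direction, follows directly from the definitions together with Lemma \ref{lem2.4} and Proposition \ref{pro3.5}. It is worth noting that abelianness is genuinely needed and cannot be dropped: $\mathrm{M}_2(\mathbb{Z}_2)$ is GSNC by Example \ref{exm3.10} but is not abelian, and correspondingly not GUNC, which confirms that (ii) alone does not imply GUNC.
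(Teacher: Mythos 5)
Your proof is correct, but it takes a genuinely different route from the paper. The paper's proof is a three-line combination of structural results: Lemma \ref{lem2.10} (a ring is GUNC iff it is local with nil Jacobson radical or UNC), Proposition \ref{pronew} (an abelian ring is GSNC iff it is local with nil Jacobson radical or strongly nil-clean), and the external fact from Chen--Sheibani's book that UNC rings are precisely the abelian strongly nil-clean rings; the equivalence then follows by matching the two dichotomies case by case. You instead argue element-wise and essentially from scratch: the forward direction uses only Lemma \ref{lem2.4} plus the observation that centrality of the idempotent upgrades a nil-clean decomposition to a strongly nil-clean one, and the reverse direction proves uniqueness directly via the two identities $(e-f)^3=e-f$ for commuting idempotents and the nilpotency of $e-f=q'-q$ as a difference of commuting nilpotents (your verification that $q$ and $q'$ commute via centrality of $e,f$ is the key point, and it is sound). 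What your approach buys is self-containment: it needs no appeal to the dichotomy lemmas nor to the cited book, and it makes transparent exactly where abelianness enters on each side. What the paper's approach buys is brevity and placement of the result inside the structural classification (local-or-UNC versus local-or-strongly-nil-clean) that the rest of the paper is built on. Your closing remark that $\mathrm{M}_2(\mathbb{Z}_2)$ witnesses the necessity of hypothesis (i) is a nice addition not present in the paper's proof.
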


\begin{proof}
It follows immediately combining Lemma \ref{lem2.10}, Proposition \ref{pronew} and \cite[Theorem $ 5. 3. 3 $]{3}.
\end{proof}

We call a ring an {\it NR} ring if its set of nilpotent elements forms a subring. Recall also that a ring $R$ is called an {\it exchange} ring, provided that, for any $a$ in $R$, there is an idempotent $e\in R$ such that $e\in aR$ and $1-e\in (1-a)R$.

\begin{lemma}\label{lem3.22}
Let $R$ be a NR ring. Then, the following two conditions are equivalent:
\\
(i) $R$ is either local with nil $J(R)$, or $R$ is strongly nil-clean.
\\
(ii) $R$ is a GSNC ring.
\end{lemma}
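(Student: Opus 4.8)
The plan is to treat the two implications separately, the forward one being essentially formal and the reverse one carrying all the content. For (i) $\Rightarrow$ (ii): if $R$ is local with nil $J(R)$ then it is GSNC by Example \ref{exa3.14}, while if $R$ is strongly nil-clean then every element, and in particular every non-unit, is strongly nil-clean, so $R$ is GSNC by definition.

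For (ii) $\Rightarrow$ (i) I would first pass to the factor $\overline{R}:=R/J(R)$. By Lemma \ref{lem3.4} the radical $J(R)$ is nil, so Proposition \ref{pro3.7}(ii) tells us $\overline{R}$ is again GSNC, and of course $J(\overline{R})=0$. Next I would record that $\overline{R}$ inherits the NR property: since $J(R)$ is nil one has ${\rm Nil}(\overline{R})={\rm Nil}(R)/J(R)$, and because ${\rm Nil}(R)$ is a subring of $R$ containing the ideal $J(R)$, its image is a subring of $\overline{R}$. Thus I have reduced to a semiprimitive NR GSNC ring.

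The heart of the matter is to show that $\overline{R}$ is reduced. Suppose not; then $\overline{R}$ contains a nonzero element $x$ with $x^2=0$. As $\overline{R}$ is GSNC it is strongly clean by Lemma \ref{lem3.2}, hence clean, hence semi-potent by \cite[Proposition 2.16]{2}; so, exactly as in the proof of Lemma \ref{lem2.19}, \cite[Theorem 2.1]{6} produces a nonzero idempotent $e$ with $e\overline{R}e\cong {\rm M}_2(S)$ for some nonzero ring $S$. Now $e\overline{R}e$ is GSNC by Corollary \ref{cor3.8}, and it is also NR, since its nilpotents are precisely ${\rm Nil}(\overline{R})\cap e\overline{R}e$, an intersection of two subrings. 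This is the contradiction I am after: ${\rm M}_2(S)$ is never an NR ring, because the matrix units $E_{12}$ and $E_{21}$ are nilpotent while $(E_{12}+E_{21})^2=1\neq 0$ shows that $E_{12}+E_{21}\notin {\rm Nil}({\rm M}_2(S))$. Hence $\overline{R}$ has no nonzero square-zero element and is therefore reduced. I expect this to be the main obstacle, and it is exactly the point where the NR hypothesis earns its keep: it rules out the corner ${\rm M}_2(\mathbb{Z}_2)$, which is itself GSNC by Example \ref{exm3.10} and which is precisely the reason the semi-local Lemma \ref{lem3.15} had to carry a third alternative.

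Finally, a reduced ring is abelian, so Proposition \ref{pronew} applies to $\overline{R}$: it is either local with nil radical or strongly nil-clean. Since $J(\overline{R})=0$, in the first case $\overline{R}$ is a division ring, and in the second, being reduced, $\overline{R}$ is Boolean. If $\overline{R}=R/J(R)$ is a division ring, then $R$ is local by definition and $J(R)$ is nil by Lemma \ref{lem3.4}, giving the first alternative of (i). If $\overline{R}$ is Boolean, then $a-a^2\in J(R)\subseteq {\rm Nil}(R)$ for every $a\in R$, so $R$ is strongly nil-clean by \cite[Theorem 5.1.1]{3}, giving the second alternative. In either case (i) holds, which completes the plan.
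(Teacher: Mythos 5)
Your proof is correct, and while its skeleton (pass to $R/J(R)$, establish commuting idempotents there, then apply the abelian dichotomy) matches the paper, the crucial step is carried out by a genuinely different argument. The forward implication is identical (Example \ref{exa3.14} plus the definition). For the converse, the paper argues: GSNC $\Rightarrow$ strongly clean (Lemma \ref{lem3.2}) $\Rightarrow$ exchange, then cites \cite[Corollary 2]{26} to conclude that $R/J(R)$ is abelian (this is where its NR hypothesis is consumed, invisibly), then Proposition \ref{pro3.21} and Lemma \ref{lem2.10} give ``local or UNC'', and \cite[Theorem 19]{25} together with \cite[Theorem 5.1.5]{3} finish. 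You instead prove the stronger fact that $R/J(R)$ is \emph{reduced}, by reusing the paper's own Lemma \ref{lem2.19} technique: if it were not reduced, Levitzki's theorem \cite[Theorem 2.1]{6} (legitimately applied, since your quotient is semi-potent with zero radical) would produce a corner $e\overline{R}e \cong {\rm M}_2(S)$ with $S \neq 0$, which by Corollary \ref{cor3.8} is GSNC and, being the intersection of two subrings, still NR --- impossible, since in ${\rm M}_2(S)$ the two off-diagonal matrix units are nilpotent while their sum squares to the identity. This makes the role of the NR hypothesis completely explicit (it is exactly what rules out the ${\rm M}_2(\mathbb{Z}_2)$ alternative appearing in Lemma \ref{lem3.15}), avoids the external citation \cite{26}, and --- because you obtain reducedness rather than mere abelianness --- lets you convert ``strongly nil-clean'' into ``Boolean'' directly, so \cite[Theorem 19]{25} is not needed either. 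The price is the extra bookkeeping you correctly carry out: verifying that NR descends to the quotient by the nil ideal $J(R)$ and to corner rings, neither of which the paper's citation-based route has to check. Both proofs are complete; yours is more self-contained and pinpoints where NR earns its keep, while the paper's is shorter at the cost of a black-box reference.
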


\begin{proof}
(i) $\Rightarrow$ (ii). It is clear by following Example \ref{exa3.14}.
\\
(ii) $\Rightarrow$ (i). If $R$ is a GSNC ring, then Lemma \ref{lem3.2} applies to get that $R$ is an exchange ring. Therefore, in virtue of \cite[Corollary 2]{26}, $R/J(R)$ is an abelian ring. Consequently, Proposition \ref{pro3.21} enables us that $R/J(R)$ is GUNC ring, which means that either $R/J(R)$ is local, or $R/J(R)$ is an UNC ring. Hence, the application of \cite[Theorem 19]{25} leads us to $R/J(R)$ is either a local ring, or a Boolean ring. Finally, inspired by \cite[Theorem 5.1.5]{3}, we conclude that $R$ is either a local ring, or is a strongly nil-clean ring.
\end{proof}

The next constructions are worthy of mentioning.

\begin{example}\label{exa3.23}
(i) Any strongly nil-clean ring is GSNC, but the converse is {\it not} true in general. For instance, consider the ring $\mathbb{Z}_{3}$ which is GSNC, but is {\it not} strongly nil-clean.
\\
(ii) Any GSNC ring is strongly clean, but the converse is {\it not} generally valid. For instance, the ring
$\mathbb{Z}_{2}[[x]]$ is strongly clean, but is {\it not} GSNC.
\\
(iii) Any GUNC ring is GSNC, but the converse is {\it not} fulfilled in generality. For instance, the ring ${\rm M}_{2}(\mathbb{Z}_{2})$ is GSNC, but is {\it not} GUNC.
\end{example}

A ring $R$ is said to be {\it semi-potent} if every one-sided ideal not contained in $J(R)$ contains a non-zero idempotent. Additionally, a semi-potent ring $R$ is called {\it potent}, provided all of its idempotents lift modulo
$J(R)$. Notice that semi-potent rings and potent rings were also named in \cite{10} as {\it $I_{0}$-rings} and {\it $I$-rings}, respectively.

In the terminology we have introduced, we remember that the definitions of GUSC, GUC, GUNC and GSNC rings are given above.

\begin{proposition}\label{pro3.24}
Let $R$ be a ring, ${\rm Id}(R)=\{0,1\}$ and $J(R)$ is nil. Then, the following are equivalent:
\\
(i) $R$ is a local ring.
\\
(ii) $R$ is a GUSC ring.
\\
(iii) $R$ is a strongly clean ring.
\\
(iv) $R$ is a clean ring.
\\
(v) $R$ is an exchange ring.
\\
(vi) $R$ is a potent ring.
\\
(vii) $R$ is a semi-potent ring.
\\
(viii) $R$ is a GUC ring.
\\
(ix) $R$ is a GUNC ring.
\\
(x) $R$ is a GSNC ring.
\end{proposition}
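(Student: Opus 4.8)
The plan is to funnel all ten conditions through semi-potence (vii) and then back to locality (i), exploiting the two standing hypotheses ${\rm Id}(R)=\{0,1\}$ and $J(R)$ nil. I would organize the argument in three blocks: (i)$\Rightarrow$(everything), (everything)$\Rightarrow$(vii), and the crux (vii)$\Rightarrow$(i).

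For (i)$\Rightarrow$(everything): a local ring is strongly clean (for each $a$ either $a$ or $a-1$ is a unit, and $0,1$ commute with everything), which gives (iii), and thence (iv) clean and (v) exchange by the standard chain. Since $R/J(R)$ is a division ring, its only idempotents are $\bar 0,\bar 1$, which lift to $0,1$; combined with semi-potence this yields (vi) potent and a fortiori (vii). For the unique-decomposition conditions, for a non-unit $a$ (so $a\in J(R)$) the idempotent $1$ is the only one with $a-e\in U(R)$ (the choice $e=0$ just returns the non-unit $a$) and it commutes with $a$, so $a$ is uniquely (strongly) clean; this gives (ii) GUSC and (viii) GUC. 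Finally (ix) is exactly Proposition~\ref{pro2.9} and (x) is exactly Example~\ref{exa3.14}, each of which has hypothesis precisely ``local with $J(R)$ nil.''

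For (everything)$\Rightarrow$(vii): each generalized notion collapses to cleanness. A GUSC or GUC ring is clean because every unit is clean (take $e=0$) and every non-unit is uniquely (strongly) clean, hence clean; a GUNC ring is strongly clean by Corollary~\ref{cor2.14}, and a GSNC ring is strongly clean by Lemma~\ref{lem3.2}, both therefore clean. The standard chain clean $\Rightarrow$ exchange $\Rightarrow$ semi-potent then carries (ii),(iii),(iv),(v),(viii),(ix),(x) down to (vii), while (vi) potent is semi-potent by definition.

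The crux is (vii)$\Rightarrow$(i), the only place where ${\rm Id}(R)=\{0,1\}$ does genuine work. Fix the one-sided convention (say left ideals) and let $L\not\subseteq J(R)$ be a left ideal. By semi-potence $L$ contains a non-zero idempotent, which must be $1$; hence $L=R$. Thus every proper left ideal lies in $J(R)$, so $J(R)$ is the unique maximal left ideal and $R$ is local, which is (i). Combining the three blocks closes the equivalence. I expect the only real obstacle to be this last short step, together with keeping the ``one-sided'' convention consistent so that ``unique maximal left ideal'' legitimately yields locality; everything else is bookkeeping layered on top of the already-established inputs Proposition~\ref{pro2.9}, Example~\ref{exa3.14}, Corollary~\ref{cor2.14}, and Lemma~\ref{lem3.2}.
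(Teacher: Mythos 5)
Your proposal is correct, and it is organized genuinely differently from the paper's proof. The paper proves the cycle (i)$\Rightarrow$(ii)$\Rightarrow$(iii)$\Rightarrow$(iv)$\Rightarrow$(v)$\Rightarrow$(vi)$\Rightarrow$(vii)$\Rightarrow$(i), outsourcing (i)$\Rightarrow$(ii) and (ii)$\Rightarrow$(iii) to the external GUSC paper \cite{20}, and then attaches the last three conditions by a separate chain of equivalences: (i)$\Leftrightarrow$(viii) via \cite[Proposition 2.9]{2}, (viii)$\Leftrightarrow$(ix) via Corollary~\ref{cor2.11} (using that $J(R)$ is nil), and (ix)$\Leftrightarrow$(x) via Proposition~\ref{pro3.21} (using that ${\rm Id}(R)=\{0,1\}$ forces $R$ abelian). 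You instead use a hub-and-spoke scheme: (i) implies each condition, each condition collapses to cleanness and hence to semi-potence (vii), and (vii)$\Rightarrow$(i) closes the loop; your crux step --- a one-sided ideal not inside $J(R)$ contains a nonzero idempotent, which must be $1$, so every proper left ideal lies in $J(R)$ and $R$ is local --- is exactly what the paper compresses into ``obvious since ${\rm Id}(R)=\{0,1\}$.'' What your route buys is self-containment: the direct verification that in a local ring a non-unit $a\in J(R)$ has $e=1$ as its only clean idempotent (with commutation free since $0,1$ are central) replaces the citations of \cite{20} and \cite{2}, and the remaining inputs (Proposition~\ref{pro2.9}, Example~\ref{exa3.14}, Corollary~\ref{cor2.14}, Lemma~\ref{lem3.2}) are all internal to the paper. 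What the paper's route buys is brevity and a display of its own structural results (Corollary~\ref{cor2.11}, Proposition~\ref{pro3.21}) as the bridges among (viii), (ix), (x). Both arguments are complete; note only that your blocks (i)$\Rightarrow$(ii),(viii) and (vii)$\Rightarrow$(i) never need $J(R)$ nil --- that hypothesis is genuinely used, as you observe, only for (ix) and (x).
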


\begin{proof}
(i) $\Rightarrow$ (ii). It follows from \cite[Example 2.7]{20}.

(ii) $\Rightarrow$ (iii). It follows from \cite[Lemma 3.2]{20}.

(iii) $\Rightarrow$ (iv) $\Rightarrow$ (v) $\Rightarrow$ (vi) $\Rightarrow$ (vii). These implications are pretty obvious, so leave all details to the interested reader.

(vii) $\Rightarrow$ (i). It is obvious since  ${\rm Id}(R) = \{0,1\}$.

(i) $\Leftrightarrow$ (viii). It follows at once from \cite[Proposition 2.9]{2}.

(viii) $\Leftrightarrow$ (ix). It follows directly from Example \ref{cor2.11}.

(ix) $\Leftrightarrow$ (x). It follows immediately from Proposition \ref{pro3.21}.
\end{proof}

\begin{example}\label{exa3.25}
(i) For any ring $R$, the power series ring $R[[x]]$ is {\it not} GSNC.
\\
(ii) If $R$ is a ring, then the polynomial ring $R[x]$ is {\it not} GSNC.
\end{example}

\begin{proof}
(i) Note the principal fact that the Jacobson radical of $R[[x]]$ is {\it not} nil. Thus, in view of Lemma
\ref{lem3.4}, $R[[x]]$ need {\it not} be a GSNC ring.
\\
(ii) If we assume the contrary that $R[x]$ is GSNC, then $R[x]$ is clean in accordance with Lemma \ref{lem3.2}.
This, however, contradicts \cite[Example $ 2 $]{4}.
\end{proof}

\begin{example}\label{exa3.26}
Let $R$ be a ring, and let
\begin{center}
${\rm S}_{n}(R)=\left\lbrace (a_{ij})\in {\rm T}_{n}(R)\, | \, a_{11}=a_{22}=\cdots=a_{nn}\right\rbrace.  $
\end{center}
Then, $R$ is GSNC if, and only if, ${\rm S}_{n}(R)$ is GSNC for all $n\in \mathbb{N}$.
\end{example}

\begin{proof}
Given ${\rm S}_{n}(R)$ is GSNC. It is easy to see that $R$ is a GSNC ring, so we omit the details.

Conversely, let $R$ be GSNC. Thus, for any $(a_{ij}) \in {\rm S}_{n}(R)$, where $(a_{ij})\notin U({\rm S}_{n}(R))$,
we see that $a_{11}=\cdots =a_{nn}\notin U(R)$. So, Proposition \ref{pro3.5} allows us to infer that
$$a_{ii}-a_{ii}^{2}\in {\rm Nil}(R)$$ for each $i$. Furthermore, write $(a_{ii}-a_{ii}^{2})^m=0$ for some
$m\in \mathbb{N}$. Consequently, $\big((a_{ij})-(a_{ij})^{2}\big)^{nm}=0$. Now, according to Proposition
\ref{pro3.5}, we infer that ${\rm S}_{n}(R)$ is GSNC, as stated.
\end{proof}

Let $R$ be a ring, and define the following rings thus:
\begin{align*}
&{\rm A}_{n,m}(R) =R[x,y | x^n=xy=y^m=0], \\
&{\rm B}_{n,m}(R) =R\left\langle x,y | x^n=xy=y^m=0  \right\rangle, \\
&{\rm C}_{n}(R) =R \langle x,y | x^2=\underbrace{xyxyx...}_{\text{$n-1$ words}}=y^2=0 \rangle.
\end{align*}

On the other vein, Wang introduced in \cite{13} the matrix ring ${\rm S}_{n,m}(R)$ as follows: suppose $R$ is a ring, then the matrix ring ${\rm S}_{n,m}(R)$ is representable like this

$$\left\{ \begin{pmatrix}
   a & b_1 & \cdots & b_{n-1} & c_{1n} & \cdots & c_{1 n+m-1}\\
   \vdots  & \ddots & \ddots & \vdots & \vdots & \ddots & \vdots \\
   0 & \cdots & a & b_1 & c_{n-1,n} & \cdots & c_{n-1,n+m-1} \\
   0 & \cdots & 0 & a & d_1 & \cdots & d_{m-1} \\
   \vdots  & \ddots & \ddots & \vdots & \vdots & \ddots & \vdots \\
   0 & \cdots & 0 & 0  & \cdots & a & d_1 \\
   0 & \cdots & 0 & 0  & \cdots & 0 & a
\end{pmatrix}\in {\rm T}_{n+m-1}(R) : a, b_i, d_j,c_{i,j} \in R \right\}.$$

Also, let ${\rm T}_{n,m}(R)$ be

$$\left\{ \left(\begin{array}{@{}c|c@{}}
  \begin{matrix}
  a & b_1 & b_2 & \cdots & b_{n-1} \\
  0 & a & b_1 & \cdots & b_{n-2} \\
  0 & 0 & a & \cdots & b_{n-3} \\
  \vdots & \vdots & \vdots & \ddots & \vdots \\
  0 & 0 & 0 & \cdots & a
  \end{matrix}
  & \bigzero \\
\hline
  \bigzero &
  \begin{matrix}
  a & c_1 & c_2 & \cdots & c_{m-1} \\
  0 & a & c_1 & \cdots & c_{m-2} \\
  0 & 0 & a & \cdots & c_{m-3} \\
  \vdots & \vdots & \vdots & \ddots & \vdots \\
  0 & 0 & 0 & \cdots & a
  \end{matrix}
\end{array}\right)\in {\rm T}_{n+m}(R) : a, b_i,c_j \in R \right\}, $$

as well as we state

$${\rm U}_{n}(R)=\left\{ \begin{pmatrix}
   a & b_1 & b_2 & b_3 & b_4 & \cdots & b_{n-1} \\
   0 & a & c_1 & c_2 & c_3 & \cdots & c_{n-2} \\
   0 & 0 & a & b_1 & b_2 & \cdots & b_{n-3} \\
   0 & 0 & 0 & a & c_1 & \cdots & c_{n-4} \\
   \vdots & \vdots & \vdots & \vdots &  &  & \vdots \\
   0 &0 & 0 & 0 & 0 & \cdots & a
\end{pmatrix}\in {\rm T}_{n}(R) :  a, b_i, c_j \in R \right\}.$$

We now show in the following statement some of the existing relations between these rings.

\begin{lemma} \label{lem3.27}
Let $R$ be a ring and $m, n \in \mathbb{N}$. Then, the following three isomorphisms hold:

(i) ${\rm A}_{n,m}(R) \cong {\rm T}_{n,m}(R)$.

(ii) ${\rm B}_{n,m}(R) \cong {\rm S}_{n,m}(R)$.

(iii) ${\rm C}_{n}(R) \cong {\rm U}_{n}(R)$.
\end{lemma}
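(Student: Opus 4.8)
The plan is to construct, for each of the three isomorphisms, an explicit $R$-algebra map from the generators-and-relations presentation into the relevant matrix ring, verify that the defining relations are respected (so the map is well-defined), and then argue bijectivity by exhibiting an $R$-basis on each side and checking that the map sends one basis to the other. The heart of every case is to recognize the matrix ring as a representation of the free algebra quotient: the single scalar $a$ on the diagonal corresponds to the image of $R$, and the nilpotent generators $x,y$ correspond to specific nilpotent matrices whose products encode the combinatorial pattern of the relations.

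For part (i), I would send $x$ and $y$ to the two standard shift-type nilpotents sitting in the upper-left $n\times n$ block and the lower-right $m\times m$ block of ${\rm T}_{n,m}(R)$, respectively, and send each $r\in R$ to $r$ times the identity of ${\rm T}_{n,m}(R)$. The relation $x^n=0$ holds because the top block is a size-$n$ superdiagonal shift (its $n$th power vanishes), similarly $y^m=0$, and $xy=0=yx$ holds automatically because the images of $x$ and $y$ live in complementary blocks with no interaction. Thus the commutative presentation ${\rm A}_{n,m}(R)=R[x,y\mid x^n=xy=y^m=0]$ maps in; a monomial basis of ${\rm A}_{n,m}(R)$ is $\{1,x,\dots,x^{n-1},y,\dots,y^{m-1}\}$ over $R$ (since any product involving both $x$ and $y$ dies), which has exactly $n+m-1$ elements, matching the number of free parameters $a,b_1,\dots,b_{n-1},c_1,\dots,c_{m-1}$ in ${\rm T}_{n,m}(R)$. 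A rank count then forces the map to be an isomorphism.

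For parts (ii) and (iii) the strategy is identical but now over the \emph{free} (noncommutative) algebra, so the harder bookkeeping is in the basis count. In ${\rm B}_{n,m}(R)=R\langle x,y\mid x^n=xy=y^m=0\rangle$ the surviving monomials are words in $x,y$ avoiding the subwords $x^n$, $xy$, $y^m$; because $xy=0$, any legal nonempty word must be $y^j x^i$ with $0\le i<n$, $0\le j<m$ (the $y$'s all precede the $x$'s), giving $nm$ monomials, which matches the count of free entries $a,b_i,d_j,c_{i,j}$ in Wang's ring ${\rm S}_{n,m}(R)$. For ${\rm C}_n(R)$ the relations $x^2=y^2=0$ force monomials to be alternating words in $x$ and $y$, and the relation killing the length-$(n-1)$ alternating word $xyxy\cdots$ truncates these, so the admissible monomials correspond exactly to the entries appearing in ${\rm U}_n(R)$; here I would match the alternating $b_i/c_j$ pattern of ${\rm U}_n(R)$ to the alternating structure $x,xy,xyx,\dots$ of the monomials. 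In each case I assign $x$ and $y$ to the explicit nilpotent matrices realizing the shift pattern and check the relations directly on the matrices.

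The main obstacle will be the noncommutative basis analysis in (ii) and (iii): one must prove that the listed monomials are genuinely $R$-linearly independent in the quotient (not merely spanning), which amounts to showing the defining relations generate exactly the intended two-sided ideal and no more. I expect to handle this by verifying that the constructed matrix representation is \emph{faithful} on the monomial set---that is, the images of the distinct admissible monomials are $R$-linearly independent matrices (they occupy distinct matrix positions)---which simultaneously proves injectivity and pins down the basis, after which surjectivity is immediate from the parameter count. The commutative case (i) is routine by comparison, so I would present it first and then emphasize only the monomial-normalization step (reducing every word to the canonical form $y^j x^i$ or the alternating form) in the two free cases.
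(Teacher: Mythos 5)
Your proposal is correct and takes essentially the same route as the paper: the paper proves each isomorphism by exhibiting exactly these explicit maps (scalars $a\in R$ to the constant diagonal, the normal-form monomials $x^i$, $y^j$, $y^jx^i$, and the alternating words to the shift-type/matrix-unit positions of ${\rm T}_{n,m}(R)$, ${\rm S}_{n,m}(R)$, ${\rm U}_n(R)$) and then asserts the verification is routine. Your version merely spells out the steps the paper leaves as ``easily checked''---well-definedness via the defining relations and bijectivity via the normal-form monomials having disjoint matrix supports matching the free parameters---so it is, if anything, a more complete rendering of the same argument.
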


\begin{proof}
(i) We assume $f = a+ \sum_{i=1}^{n-1}b_ix^i + \sum_{j=1}^{m-1}c_jy^j \in {\rm A}_{n,m}(R)$. We define $\varphi:  {\rm A}_{n,m}(R) \to {\rm T}_{n,m}(R)$ as
$$\varphi(f)=
\left(\begin{array}{@{}c|c@{}}
  \begin{matrix}
  a & b_1 & b_2 & \cdots & b_{n-1} \\
  0 & a & b_1 & \cdots & b_{n-2} \\
  0 & 0 & a & \cdots & b_{n-3} \\
  \vdots & \vdots & \vdots & \ddots & \vdots \\
  0 & 0 & 0 & \cdots & a
  \end{matrix}
  & \bigzero \\
\hline
  \bigzero &
  \begin{matrix}
  a & c_1 & c_2 & \cdots & c_{m-1} \\
  0 & a & c_1 & \cdots & c_{m-2} \\
  0 & 0 & a & \cdots & c_{m-3} \\
  \vdots & \vdots & \vdots & \ddots & \vdots \\
  0 & 0 & 0 & \cdots & a
  \end{matrix}
\end{array}\right). $$
It can easily be shown that $\varphi$ is a ring isomorphism, as required.

\medskip

(ii) We assume $f \in {\rm B}_{n,m}(R)$ such that
\begin{align*}
    f &=  a_{00}y^0x^0 + a_{01}y^0x^1 + \cdots + a_{0,n-1}y^0x^{n-1}  \\
      &+  a_{10}y^1x^0 + a_{11}y^1x^1 + \cdots + a_{1,n-1}y^1x^{n-1}   \\
      & \qquad \vdots   \qquad \qquad \vdots  \qquad \qquad \qquad \qquad  \vdots   \\
      &+ a_{m-1,0}y^{m-1}x^0 + a_{m-1,1}y^{m-1}x^1 + \cdots + a_{m-1,n-1}y^{m-1}x^{n-1}    \\
\end{align*}
We define $\psi : {\rm B}_{n,m}(R) \to {\rm S}_{n,m}(R)$ as
$$\psi(f)=
 \begin{pmatrix}
   a_{00} & a_{10} & \cdots & a_{m-1,0} & a_{m-1,1} & \cdots & a_{m-1,n-1}\\
   \vdots  & \ddots & \ddots & \vdots & \vdots & \ddots & \vdots \\
   0 & \cdots & a_{00} & a_{10} & a_{11} & \cdots & a_{1,n-1} \\
   0 & \cdots & 0 & a_{00} & a_{01} & \cdots & a_{0,n-1} \\
   \vdots  & \ddots & \ddots & \vdots & \vdots & \ddots & \vdots \\
   0 & \cdots & 0 & 0  & \cdots & a_{00} & a_{0,1} \\
   0 & \cdots & 0 & 0  & \cdots & 0 & a_{00}
\end{pmatrix}.$$
It can plainly be proved that $\psi$ is an isomorphism, as needed.

\medskip

(iii) We introduce the coefficients as follows:
$$f= \sum_{0\le i_j \le 1\atop 1 \le j \le n-1}d_{(i_1, \dots , i_{n-1})}\underbrace{y^{i_1}x^{i_2}y^{i_3}x^{i_4}...}_{\text{$n-1$ words}}\in {\rm C}_{n}(R)$$

We define $\phi : {\rm C}_{n}(R) \to {\rm S}_{n,m}(R)$ as
$$\phi(f)=
\begin{pmatrix}
   d_{(0,0,0, \dots,0)} & d_{(1,0,0, \dots,0)} & d_{(1,1,0, \dots,0)} & d_{(1,1,1, \dots,0)} & \cdots & d_{(1,1,1, \dots,1)}\\
   0 & d_{(0,0,0, \dots,0)} & d_{(0,1,0, \dots,0)} & d_{(0,1,1, \dots,0)} & \cdots & d_{(0,1,1, \dots,1)} \\
   0 & 0 & d_{(0,0,0, \dots,0)} & d_{(1,0,0, \dots,0)} & \cdots & d_{(1,\dots,1,0,0)}\\
   0 & 0 & 0 & d_{(0,0,0, \dots,0)} & \cdots & d_{(0,1,\dots,1,0,0)} \\
   \vdots  & \vdots & \vdots & \vdots & \ddots & \vdots \\
   0 &  0 & 0  & 0  & \cdots & d_{(0,0,0, \dots,0)}
\end{pmatrix}.$$
It can readily be checked that $\phi$ is an isomorphism, as asked for.
\end{proof}

Our three concrete examples are the following.

\begin{example}\label{exa3.28}
Letting $R$ be a ring, then we have:

(i) $R\left[ x,y | x^2=xy=y^2=0  \right] \cong \left\{
\begin{pmatrix}
    a_1 & a_2 & 0 & 0 \\
    0 & a_1 & 0 & 0 \\
    0 & 0 & a_1 & a_3 \\
    0 & 0 & 0 & a_1
\end{pmatrix} : a_i \in R \right\}.
$

(ii) $R\left\langle x,y | x^2=xy=y^2=0  \right\rangle \cong \left\{
\begin{pmatrix}
    a_1 & a_2 & a_3 \\
    0 & a_1 & a_4 \\
    0 & 0 & a_1
\end{pmatrix} : a_i \in R \right\}.$

(iii) $R\left\langle x,y | x^2=xyx=y^2=0  \right\rangle \cong \left\{
\begin{pmatrix}
    a_1 & a_2 & a_3 & a_4 \\
    0 & a_1 & a_5 & a_6 \\
    0 & 0 & a_1 & a_2 \\
    0 & 0 & 0 & a_1
\end{pmatrix} : a_i \in R \right\} \cong {\rm T}({\rm T}(R,R), {\rm M}_2(R)).$
\end{example}

\begin{example}\label{exa3.29}
Let $R$ be a ring. Then, the following statements are equivalent:
\\
(i) $R$ is a GSNC ring.
\\
(ii) ${\rm S}_{n,m}(R)$ is a GSNC ring.
\\
(iii) ${\rm T}_{n,m}(R)$ is a GSNC ring.
\\
(iv) ${\rm U}_{n}(R)$ is a GSNC ring.
\end{example}

\begin{proof}
The proof is similar to that of Example \ref{exa3.26}, so remove the details. 	
\end{proof}

\begin{example}\label{exa3.30}
Let $R$ be a ring. If ${\rm T}_{n}(R)$ is a GSNC ring, then $R$ is GSNC. However, the converse is {\it not} true in general.
\end{example}

\begin{proof}
Choose $e={\rm diag}(1,0,\ldots,0)\in {\rm T}_{n}(R)$. Then, one sees that $R\cong e{\rm T}_{n}(R)e$. Furthermore, with Corollary \ref{cor3.8} at hand, we are done.

As for the converse, take $R=\mathbb{Z}_{3}$ and $S={\rm T}_{2}(\mathbb{Z}_{3})$.
Clearly, $R$ is a GSNC ring. But, an easy inspection leads to $A\hspace{-1mm}=$$0~0\choose 0~2$$\notin U(S)$, and thereby $A$ is definitely {\it not} a strongly nil-clean element in $S$, as required, whence $S$ need {\it not} be GSNC.
\end{proof}

Before stating and proving our next major result, the following two propositions are pretty welcome.

\begin{proposition}\label{pro3.31}
Let $R$ be a GSNC ring. Then, for any $n>2$, there does not exist $0\neq e\in {\rm Id}(R)$ such that $eRe\cong {\rm M}_{n}(S)$ for some ring $S$.
\end{proposition}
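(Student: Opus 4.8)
The plan is to argue by contradiction: suppose $R$ is GSNC and there exist $0 \neq e \in {\rm Id}(R)$ and an integer $n > 2$ with $eRe \cong {\rm M}_n(S)$ for some ring $S$. The immediate strategy is to transfer the GSNC property from $R$ down to the corner $eRe$, and then to the matrix ring ${\rm M}_n(S)$, where we already know GSNC fails for $n \geq 3$. First I would invoke Corollary~\ref{cor3.8}, which states that if $R$ is GSNC and $0 \neq e \in {\rm Id}(R)$, then the corner ring $eRe$ is again GSNC. This is the workhorse of the argument and requires no additional hypotheses on $e$ (in particular, $e$ need not be central), so it applies directly to our chosen idempotent.

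Next, since being GSNC is clearly preserved under ring isomorphism (the notions of unit, idempotent, nilpotent, and the commuting condition all transfer across an isomorphism), the isomorphism $eRe \cong {\rm M}_n(S)$ forces ${\rm M}_n(S)$ to be GSNC as well. The final step is to contradict this: because $n > 2$, i.e. $n \geq 3$, Theorem~\ref{thm3.9} tells us that ${\rm M}_n(S)$ is \emph{never} GSNC for any nonzero ring $S$ and any $n \geq 3$. Here I must be slightly careful about the edge case $S = 0$, in which ${\rm M}_n(S)$ is the zero ring; but the hypothesis $e \neq 0$ guarantees $eRe$ is a nonzero ring, so $S \neq 0$ and Theorem~\ref{thm3.9} applies cleanly. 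This contradiction completes the proof.

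I do not expect any genuine obstacle here, since both of the key ingredients---the descent of GSNC to corners and the failure of GSNC for $\geq 3 \times 3$ matrices---are already established earlier in the excerpt. The only point demanding a sentence of care is ruling out the degenerate case and noting explicitly that GSNC is an isomorphism-invariant property, so that the isomorphism $eRe \cong {\rm M}_n(S)$ may be used to push the contradiction onto the matrix ring. The proof is therefore essentially a two-line chaining of Corollary~\ref{cor3.8} with Theorem~\ref{thm3.9}, and the main ``work'' is merely assembling these two results in the correct order.
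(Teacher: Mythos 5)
Your proposal is correct and follows exactly the paper's own argument: contradiction via Corollary~\ref{cor3.8} to push GSNC down to the corner $eRe$, then invoking Theorem~\ref{thm3.9} on ${\rm M}_n(S)$. Your extra remark ruling out $S=0$ (via $e\neq 0$) is a small point of care that the paper's proof leaves implicit, but otherwise the two proofs coincide.
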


\begin{proof} Let us assume the opposite, namely that there exists $0\neq e\in {\rm Id}(R)$ such that $eRe\cong {\rm M}_{n}(S)$ for some ring $S$. Since $R$ is, by assumption, GSNC, it follows from Corollary \ref{cor3.8} that the corner subring $eRe$ is GSNC too, and hence ${\rm M}_{n}(S)$ is GSNC as well. This, however, is a contradiction with Theorem \ref{thm3.9}.
\end{proof}

Recall that a set $\{e_{ij} : 1 \le i, j \le n\}$ of non-zero elements of $R$ is said to be a {\it system of $n^2$ matrix units} if $e_{ij}e_{st} = \delta_{js}e_{it}$, where $\delta_{jj} = 1$ and $\delta_{js} = 0$ for $j \neq s$. In this case, $e := \sum_{i=1}^{n} e_{ii}$ is an idempotent of $R$ and $eRe \cong {\rm M}_n(S)$, where $$S = \{r \in eRe : re_{ij} = e_{ij}r, \text{for all} i, j = 1, 2, . . . , n\}.$$

\begin{proposition}\label{pro3.32}
Every GSNC ring is directly finite.
\end{proposition}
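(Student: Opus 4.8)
The plan is to exploit the fact, already available to us through Corollary~\ref{cor3.6}, that every GSNC ring is strongly $\pi$-regular; direct finiteness will then follow from a short inverse-chasing computation. It is worth stressing at the outset why the elegant route used for GUNC rings in Proposition~\ref{pro2.8} is unavailable here. That argument relied on the idempotent $ba$ being central (Lemma~\ref{lem2.4}), but GSNC rings need \emph{not} be abelian---indeed ${\rm M}_{2}(\mathbb{Z}_{2})$ is GSNC and visibly non-abelian---so the centralizing step collapses and a different mechanism is required.

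Concretely, suppose $ab=1$ for some $a,b\in R$; the goal is to deduce $ba=1$. First I would record the trivial induction $a^{n}b^{n}=1$ for every $n\geq 1$, which holds since $a^{n+1}b^{n+1}=a(a^{n}b^{n})b=ab=1$. Next I would invoke strong $\pi$-regularity for the element $b$: there exist an integer $n\geq 1$ and an element $d\in R$ with $b^{n}=b^{n+1}d$. Left-multiplying this identity by $a^{n}$ and using $a^{n}b^{n}=1$ gives
\[
1=a^{n}b^{n}=a^{n}b^{n+1}d=(a^{n}b^{n})(bd)=bd,
\]
so that $bd=1$. Thus $b$ has $a$ as a left inverse and $d$ as a right inverse, whence $a=a(bd)=(ab)d=d$; consequently $ba=bd=1$, as required.

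There is no genuine obstacle once strong $\pi$-regularity is in hand: the whole computation is purely formal, needing neither commutativity of $d$ with $b$ nor the full two-sided strength of the property. The single point I would double-check is that the form of strong $\pi$-regularity supplied by Corollary~\ref{cor3.6}---namely that each element $c$ satisfies $c^{k}=c^{k+1}r$ for some $k$ and $r$, the non-unit case arising from $(c-c^{2})^{k}=0$ via Proposition~\ref{pro3.5} and the unit case being immediate---does yield the relation $b^{n}=b^{n+1}d$ for the specific element $b$. Since it does (and if $b$ happens to be a unit, $ab=1$ already forces $ba=1$ directly), the argument goes through verbatim.
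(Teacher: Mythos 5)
Your proof is correct, but it follows a genuinely different route from the paper. The paper argues by contradiction: assuming $ab=1$ but $ba\neq 1$, it builds the Jacobson system of matrix units $e_{ij}=a^{i}(1-ba)b^{j}$, so that the idempotent $e=\sum_{i=1}^{n}e_{ii}$ yields a corner $eRe\cong {\rm M}_{n}(S)$ with $S\neq 0$ and $n\geq 3$; since corners of GSNC rings are GSNC (Corollary~\ref{cor3.8}), this contradicts Theorem~\ref{thm3.9}. You instead reduce to the classical fact that strongly $\pi$-regular rings are directly finite, using Corollary~\ref{cor3.6} together with the inverse-chasing computation $1=a^{n}b^{n}=a^{n}b^{n+1}d=bd$, whence $a=d$ and $ba=1$; your verification that the element-wise relation $b^{n}=b^{n+1}d$ really is available (from $(b-b^{2})^{k}=0$ when $b\notin U(R)$, trivially when $b\in U(R)$) is exactly the point that needed checking, and it holds. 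Your argument is more elementary and constructive --- it needs neither matrix units nor the obstruction Theorem~\ref{thm3.9}, only Proposition~\ref{pro3.5} --- while the paper's proof has the side benefit of showing how the corner-ring machinery (Corollary~\ref{cor3.8}) and the matrix-ring exclusion combine, a pattern it reuses in Proposition~\ref{pro3.31}. Your opening remark that the Proposition~\ref{pro2.8} argument cannot be recycled because GSNC rings need not be abelian (witness ${\rm M}_{2}(\mathbb{Z}_{2})$) is also accurate and well placed.
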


\begin{proof}
Suppose $R$ is a GSNC ring. If we assume the reverse, namely that $R$ is {\it not} a directly finite ring, then there exist elements $a, b \in R$ such that $ab = 1$ but $ba \neq 1$. Putting $e_{ij} := a^i(1-ba)b^j$ and
$e :=\sum_{i=1}^{n}e_{ii}$, a routine verification shows that there will exist a non-zero ring $S$ such that $eRe \cong {\rm M}_n(S)$. However, according to Corollary \ref{cor3.8}, the corner $eRe$ is a GSNC ring, so that ${\rm M}_n(S)$ must also be a GSNC ring, thus contradicting Theorem \ref{thm3.9}.
\end{proof}	

We now have all the machinery necessary to establish the following.

\begin{theorem}\label{thm3.33}
Let $R$ be a ring. Then, the following equivalencies hold:
\\
(i) $R$ is GSNC.
\\
(ii) $\dfrac{R[[x]]}{\langle x^{n}\rangle}$ is GSNC for all $n\in \mathbb{N}$.
\\
(iii) $\dfrac{R[[x]]}{\langle x^{n}\rangle}$ is GSNC for some $n\in \mathbb{N}$.
\end{theorem}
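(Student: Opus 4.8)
The plan is to prove the equivalence by first reducing the ring $R[[x]]/\langle x^n\rangle$ to a quotient by a nilpotent ideal, and then invoking the already-established Proposition \ref{pro3.7}(i), which says that $R$ is GSNC if and only if $R/I$ is GSNC for any nil-ideal $I$. The key structural observation is that $S_n := R[[x]]/\langle x^n\rangle$ is isomorphic to $\mathrm{S}_n(R)$, the ring of upper-triangular matrices with constant diagonal introduced in Example \ref{exa3.26}; alternatively, and more directly, one identifies the ideal generated by the image $\bar x$ of $x$ in $S_n$.

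First I would show that the implications $(ii)\Rightarrow(iii)$ is trivial (take any particular $n$, say $n=2$), and that $(iii)\Rightarrow(i)$ and $(i)\Rightarrow(ii)$ are the substantive directions. For these, the central idea is that in $S_n = R[[x]]/\langle x^n\rangle$, the principal ideal $I = \langle \bar x\rangle$ is nil: indeed $\bar x^{\,n} = 0$, so every element of $I$ is nilpotent, and in fact $I$ is a nilpotent ideal with $I^n = 0$. Moreover the quotient $S_n/I \cong R$, since killing $\bar x$ collapses the power-series structure back to the coefficient ring. Thus Proposition \ref{pro3.7}(i) applies directly: $S_n$ is GSNC if and only if $S_n/I \cong R$ is GSNC. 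This immediately yields both $(i)\Leftrightarrow(ii)$ (for every $n$) and $(i)\Leftrightarrow(iii)$ (for the chosen $n$), closing the cycle of equivalences.

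The one point requiring care, and the main obstacle, is verifying that $I = \langle\bar x\rangle$ is genuinely a two-sided nil-ideal and that $S_n/I \cong R$ as rings, rather than merely as additive groups. Over a noncommutative $R$ the power-series ring $R[[x]]$ has $x$ central (it commutes with all coefficients by the standard convention), so $\langle x^n\rangle$ is a two-sided ideal and $\bar x$ is central in $S_n$; this makes $I = \bar x S_n$ both nil (as $\bar x^{\,n}=0$ forces $(\bar x s)^n = \bar x^{\,n} s^n = 0$ using centrality) and two-sided. The ring isomorphism $S_n/I \to R$ is induced by the evaluation-at-zero map sending $\sum_{i=0}^{n-1} a_i \bar x^{\,i} \mapsto a_0$, whose kernel is exactly $I$; one checks multiplicativity using that $\bar x$ is central. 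Once these identifications are in place, the proof is a one-line appeal to Proposition \ref{pro3.7}(i).

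I would therefore structure the write-up as: observe $S_n := R[[x]]/\langle x^n\rangle$ has the central nilpotent element $\bar x$ with $\bar x^{\,n}=0$; note $I := \bar x S_n$ is a nil (indeed nilpotent) two-sided ideal with $S_n/I \cong R$; then conclude by Proposition \ref{pro3.7}(i) that $S_n$ is GSNC if and only if $R$ is GSNC, which gives all three equivalences at once since the statement $(i)\Leftrightarrow(ii)$ holds for every $n$ and $(i)\Leftrightarrow(iii)$ holds for any single $n$, while $(ii)\Rightarrow(iii)$ is immediate. This keeps the argument short and leans entirely on the previously proved nil-ideal reduction.
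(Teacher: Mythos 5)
Your proof is correct, but it takes a genuinely different route from the paper's. The paper proves (i) $\Rightarrow$ (ii) by a direct element-wise verification of the criterion in Proposition \ref{pro3.5}: it describes the unit group $U(S)=\{\sum r_i x^i : r_0\in U(R)\}$, takes a non-unit $r(x)$, and shows $r(x)-r(x)^2$ is nilpotent by raising $(r_0-r_0^2)+bx$ to an explicit power; it then proves (iii) $\Rightarrow$ (i) by restricting the same criterion to constant elements $r\in R\subseteq S$. You instead observe that $\bar x$ is central with $\bar x^{\,n}=0$, so $I=\bar x S_n$ is a nilpotent two-sided ideal with $S_n/I\cong R$, and then invoke the nil-ideal reduction of Proposition \ref{pro3.7}(i) once, obtaining all equivalences simultaneously. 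Your packaging is shorter and arguably safer: the paper's explicit exponent $2m+1$ in $\bigl((r_0-r_0^2)+bx\bigr)^{2m+1}=0$ does not obviously suffice when $n$ is large (a term such as $c^{m-1}(bx)c^{m-1}(bx)c$ with $c=r_0-r_0^2$ need not vanish); the clean bound is $mn$, or one argues exactly as you do, that an element nilpotent modulo a nilpotent ideal is nilpotent. Your route also matches the technique the paper itself uses for Proposition \ref{pro3.41} (Method 1) and Corollary \ref{cor3.45}, whereas the paper's direct computation for Theorem \ref{thm3.33} yields, as a byproduct, the explicit description of $U(S)$ and of the strongly nil-clean decompositions, which your argument does not exhibit.
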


\begin{proof}
(i) $\Rightarrow$ (ii). Set $S:=\dfrac{R[[x]]}{\langle x^{n}\rangle}$.
Thus, $$S=\{\sum_{i=0}^{n-1} r_{i}x^{i}\, | \, r_{0},\ldots,r_{n-1}\in R,\, x^{n}=0 \}.$$
Let $r(x)=\sum_{i=0}^{n-1} r_{i}x^{i}\in S$, where $r(x)\notin U(S)$.
On the other hand, we know that $$U(S)=\{ \sum_{i=0}^{n-1} r_{i}x^{i}\in S \, |\, r_{0}\in U(R) \}.$$
We also see that $$r(x)-r^{2}(x)=(r_{0}-r_{0}^{2})+bx$$ for some $b\in R$. As $r_{0}-r_{0}^{2}\in {\rm Nil}(R)$,
we can find some $m\in \mathbb{N}$ such that $(r_{0}-r_{0}^{2})^{m}=0$, and so
$$(r(x)-r^{2}(x))^{2m+1}=((r_{0}-r_{0}^{2})+bx)^{2m+1}=0.$$
Therefore, one infers that $r(x)-r^{2}(x)\in {\rm Nil}(S)$. Furthermore, we apply Proposition \ref{pro3.5} to get the assertion.
\\
(ii) $\Rightarrow$ (iii). It is trivial.
\\
(iii) $\Rightarrow$ (i). For any $r\in R$, where $r\notin U(R)$, we see that $r\in S$ with $r\notin U(S)$, and thus
$r-r^{2}\in {\rm Nil}(S)$. This implies that $r-r^{2}\in {\rm Nil}(R)$. Therefore, $R$ is GSNC, as asserted.
\end{proof}

An immediate consequence is the one:

\begin{corollary}\label{cor3.34}
Let $R$ be a ring. Then, the following are equivalent:
\\
(i) $R$ is GSNC.
\\
(ii) $\dfrac{R[x]}{\langle x^{n}\rangle}$ is GSNC for all $n\in \mathbb{N}$.
\\
(iii) $\dfrac{R[x]}{\langle x^{n}\rangle}$ is GSNC for some $n\in \mathbb{N}$.
\end{corollary}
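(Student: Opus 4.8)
The plan is to reduce everything to Theorem~\ref{thm3.33} via the elementary observation that polynomial truncation and power-series truncation produce the very same ring. Concretely, I would first record that for each $n\in\mathbb{N}$ there is a natural ring isomorphism
$$\frac{R[x]}{\langle x^{n}\rangle}\cong\frac{R[[x]]}{\langle x^{n}\rangle}.$$
Indeed, both quotients consist exactly of the truncated expressions $\sum_{i=0}^{n-1}r_{i}x^{i}$ with $r_{0},\ldots,r_{n-1}\in R$, subject to the single relation $x^{n}=0$, and in both cases the operation is ordinary multiplication of such expressions read modulo $x^{n}$. Thus the assignment sending the class of a polynomial to the class of the same polynomial viewed as a power series is a well-defined bijective ring homomorphism; this is precisely the description of the ring $S$ already employed in the proof of Theorem~\ref{thm3.33}.

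With this identification in hand, the three conditions of the present corollary become verbatim the three conditions of Theorem~\ref{thm3.33}: condition (i) is identical, while (ii) and (iii) differ only by replacing $R[[x]]/\langle x^{n}\rangle$ with the isomorphic ring $R[x]/\langle x^{n}\rangle$. Since the GSNC property is manifestly preserved under ring isomorphism, the chain of equivalences (i)$\Leftrightarrow$(ii)$\Leftrightarrow$(iii) transfers directly from Theorem~\ref{thm3.33}, and the result follows at once.

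The only point requiring the slightest care — and hence the sole potential obstacle — is verifying that the displayed map really is a ring homomorphism and not merely additive. This reduces to the observation that multiplying two truncated polynomials and then discarding the terms of degree $\ge n$ yields the same element whether the discarding is carried out in $R[x]$ or in $R[[x]]$, which is immediate from the definition of the ideal $\langle x^{n}\rangle$ in each ring. No further computation is needed, so the corollary is genuinely a one-line consequence of Theorem~\ref{thm3.33}.
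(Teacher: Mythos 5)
Your proof is correct and is exactly the route the paper intends: the paper states this corollary as an immediate consequence of Theorem~\ref{thm3.33}, the implicit justification being precisely the identification $R[x]/\langle x^{n}\rangle\cong R[[x]]/\langle x^{n}\rangle$ that you spell out. Your extra care in checking that the identification is a genuine ring isomorphism (kernel of $R[x]\to R[[x]]/\langle x^{n}\rangle$ being $\langle x^{n}\rangle$) is sound and fills in the detail the paper leaves unstated.
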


Furthermore, let $A, B$ be two rings, and let $M,N$ be the $(A,B)$-bi-module and $(B,A)$-bi-module, respectively. Also, we consider the bilinear maps $\phi : M\otimes_B N \to A$ and $\psi : N\otimes_AM \to B$ that apply to the following properties
$${\rm Id}_M \otimes_B \psi = \phi \otimes_A {\rm Id}_M, \quad {\rm Id}_N \otimes_A \phi = \psi \otimes_B {\rm Id}_N.$$
For $m \in M$ and $n \in N$, we define $mn := \phi(m \otimes n)$ and $nm := \psi(n \otimes m)$.
Thus, the $4$-tuple
$R= \begin{pmatrix}
	A & M \\
	N & B
\end{pmatrix}$
becomes to an associative ring equipped with the obvious matrix operations, which is called a {\it Morita context ring}. Denote the two-sided ideals ${\rm Im}\phi$ and ${\rm Im}\psi$ to $MN$ and $NM$, respectively, that are called the {\it trace ideals} of the Morita context.

We now have all the ingredients needed to prove the following.

\begin{theorem}\label{thm3.35}
Let $R=$$A~M \choose N~B$ be a Morita context such that $MN$ and $NM$ are nilpotent ideals of $A$ and $B$, respectively. Then, $R$ is GSNC if, and only if, both A and B are strongly nil-clean.
\end{theorem}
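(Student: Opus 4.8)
The plan is to exploit the Morita context ring structure together with the nilpotency of the trace ideals to reduce the problem to the diagonal rings $A$ and $B$. First I would observe that the set $I=\begin{pmatrix} MN & M \\ N & NM \end{pmatrix}$ (more precisely, the ideal generated by the off-diagonal part) is a nil-ideal of $R$: since $MN$ and $NM$ are nilpotent and the cross terms multiply into these trace ideals, any element of this ideal is nilpotent. The crucial structural fact is then that $R/I \cong A/MN \times B/NM$, so the off-diagonal entries together with the trace ideals are swept away modulo a nil-ideal. By Proposition~\ref{pro3.7}(i), $R$ is GSNC if and only if $R/I$ is GSNC, and by Proposition~\ref{pro3.7}(iii), the product $A/MN \times B/NM$ is GSNC if and only if both factors $A/MN$ and $B/NM$ are \emph{strongly nil-clean}.

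The next step is to relate the strong nil-cleanness of $A/MN$ and $B/NM$ back to that of $A$ and $B$ themselves. Since $MN$ is a nilpotent, hence nil, ideal of $A$, strongly nil-clean rings are closed under passage to and from quotients by nil-ideals (this is the strongly nil-clean analogue of Proposition~\ref{pro3.7}(i), available via \cite[Proposition 3.13]{21} or \cite[Theorem 5.1.1]{3}, using that $J$ is determined modulo nil-ideals and idempotents lift). Thus $A/MN$ is strongly nil-clean if and only if $A$ is strongly nil-clean, and likewise for $B$ and $NM$. Chaining these equivalences yields that $R$ is GSNC if and only if both $A$ and $B$ are strongly nil-clean, which is exactly the claim.

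The main obstacle I anticipate is verifying cleanly that $I$ really is a nil-ideal of $R$ and that the quotient $R/I$ is precisely the product $A/MN \times B/NM$. One must check that every element $\begin{pmatrix} a & m \\ n & b \end{pmatrix}$ with $a\in MN$, $b\in NM$ is nilpotent: squaring produces diagonal entries in $MN$ and $NM$ and off-diagonal entries that, upon repeated multiplication, land inside higher powers of the trace ideals; with $(MN)^s=0$ and $(NM)^t=0$ one gets a uniform nilpotency bound. This is a finite but slightly fiddly matrix computation that requires carefully tracking how the bimodule products interleave via $\phi$ and $\psi$. Once the nilpotency bound is established, the ideal property and the quotient identification are routine.

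I should flag one subtlety: the two directions of Proposition~\ref{pro3.7}(i) are stated for GSNC rings, and here I also need the strongly-nil-clean version of the nil-ideal invariance for the factors $A$ and $B$; since strongly nil-clean rings are a well-studied subclass (and strong nil-cleanness descends to and lifts across nil-ideals by the cited results from \cite{21} and \cite{3}), I would invoke those references rather than reprove the fact. With that in hand, the equivalence follows by stringing together the three reductions: $R$ is GSNC $\iff R/I$ is GSNC $\iff A/MN$ and $B/NM$ are strongly nil-clean $\iff A$ and $B$ are strongly nil-clean.
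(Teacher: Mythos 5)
Your proposal is correct, but it takes a genuinely different route from the paper's proof. The paper argues asymmetrically: for the forward direction it fixes $a\in A$, checks that $\left(\begin{smallmatrix} a & 0 \\ 0 & 0\end{smallmatrix}\right)\notin U(R)$ (possible since $B\neq 0$), applies Proposition~\ref{pro3.5} to get $a-a^{2}\in{\rm Nil}(A)$, and concludes that $A$ is strongly nil-clean via the elementwise characterization in \cite[Theorem 5.1.1]{3}; notably, this direction never uses the nilpotency of $MN$ and $NM$. For the converse, the paper simply cites \cite[Theorem 3.4]{23}, which yields that $R$ is strongly nil-clean, hence GSNC. Your argument is instead symmetric and self-contained: the set $I=\left(\begin{smallmatrix} MN & M \\ N & NM\end{smallmatrix}\right)$ is indeed an ideal of $R$ (the cross products $mn'$ and $nm'$ land in the trace ideals), it is nilpotent (for instance $I^{3}\subseteq\left(\begin{smallmatrix} (MN)^{2} & MNM \\ NMN & (NM)^{2}\end{smallmatrix}\right)$, so successive powers of $I$ push the diagonal entries into higher powers of $MN$ and $NM$), and $R/I\cong A/MN\times B/NM$; then Proposition~\ref{pro3.7}(i), Proposition~\ref{pro3.7}(iii) (applied with two factors, as its proof requires), and nil-ideal invariance of strong nil-cleanness complete the chain of equivalences. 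What your route buys is a proof that avoids the external reference \cite{23} and makes transparent why GSNC collapses to strong nil-cleanness in this setting: modulo a nil ideal, $R$ becomes a nontrivial direct product, and Proposition~\ref{pro3.7}(iii) forces both factors to be strongly nil-clean. What the paper's route buys is brevity, plus a forward implication valid without any hypothesis on the trace ideals. One citation quibble: \cite[Proposition 3.13]{21} concerns finite products, not passage modulo nil ideals; for the descent and lifting of strong nil-cleanness across $MN$ and $NM$ you should invoke \cite[Theorem 5.1.1]{3} (all $a-a^{2}$ nilpotent), from which that invariance is immediate.
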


\begin{proof}
Assume $R$ is a GSNC ring. We show that $A$ is a strongly nil-clean ring and, similarly, it can be shown that $B$ is also a strongly nil-clean ring. To this goal, let us assume $a \in A$; then an elementary check gives that
$C=\begin{pmatrix}
    a & 0 \\ 0 & 0
\end{pmatrix} \notin U(R)$, so Proposition \ref{pro3.5} yields $C-C^2 \in {\rm Nil}(R)$, that is, $a-a^2 \in {\rm Nil}(A)$. So, $A$ is a strongly nil-clean ring. The converse implication can be obtained by \cite[Theorem 3.4]{23}.
\end{proof}

Now, let $R$, $S$ be two rings, and let $M$ be an $(R,S)$-bi-module such that the operation $(rm)s = r(ms$) is valid for all $r \in R$, $m \in M$ and $s \in S$. Given such a bi-module $M$, we can set

$$
{\rm T}(R, S, M) =
\begin{pmatrix}
	R& M \\
	0& S
\end{pmatrix}
=
\left\{
\begin{pmatrix}
	r& m \\
	0& s
\end{pmatrix}
: r \in R, m \in M, s \in S
\right\},
$$
where it forms a ring with the usual matrix operations. The so-stated formal matrix ${\rm T}(R, S, M)$ is called a {\it formal triangular matrix ring}. In Theorem \ref{thm3.35}, if we set $N =\{0\}$, then we will obtain the following two corollaries.

\begin{corollary}\label{cor3.36}
Let $R,S$ be rings and let $M$ be an $(R,S)$-bi-module. Then, the formal triangular matrix ring ${\rm T}(R,S,M)$
is GSNC if, and only if, both $A$ and $B$ are strongly nil-clean.
\end{corollary}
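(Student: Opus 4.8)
The plan is to deduce Corollary~\ref{cor3.36} directly from Theorem~\ref{thm3.35} by specializing the Morita context so that the lower-left bi-module collapses to zero. Concretely, I would take the Morita context ring $R=\begin{pmatrix} A & M \\ N & B \end{pmatrix}$ of Theorem~\ref{thm3.35}, relabel $A$ as the ring $R$ of the corollary and $B$ as the ring $S$ of the corollary, keep the same $(R,S)$-bi-module $M$, and set $N=\{0\}$. The first step is to verify that this specialization genuinely produces the formal triangular matrix ring: with $N=\{0\}$ the matrices in the context acquire the shape $\begin{pmatrix} r & m \\ 0 & s \end{pmatrix}$ with $r\in R$, $m\in M$, $s\in S$, and the context multiplication reduces to ordinary upper-triangular matrix multiplication because every product landing in the lower-left corner involves a factor from $N=\{0\}$. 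Hence ${\rm T}(R,S,M)$ is exactly the Morita context ring in this degenerate case.

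The second step is to check that the nilpotency hypotheses of Theorem~\ref{thm3.35} are automatically satisfied. The trace ideals are $MN={\rm Im}\,\phi$ and $NM={\rm Im}\,\psi$; since $N=\{0\}$, both bilinear maps $\phi$ and $\psi$ have image $\{0\}$, so $MN=\{0\}$ and $NM=\{0\}$ are the zero ideals, which are trivially nilpotent in $R$ and $S$ respectively. Thus every hypothesis of Theorem~\ref{thm3.35} holds with no extra assumption, and the theorem applies verbatim.

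Applying Theorem~\ref{thm3.35} then yields that $R=\begin{pmatrix} R & M \\ 0 & S\end{pmatrix}={\rm T}(R,S,M)$ is GSNC if, and only if, both $A$ and $B$ — that is, in the corollary's notation, both $R$ and $S$ — are strongly nil-clean. This is precisely the assertion of the corollary. The only point requiring minor care is the notational clash introduced by the statement itself, which uses $A$ and $B$ in its conclusion while using $R$, $S$ in its hypothesis; I would read these as inherited directly from Theorem~\ref{thm3.35} under the identification $A=R$ and $B=S$. Since the whole argument is a one-line specialization, I do not anticipate any genuine obstacle: the proof consists merely in observing that setting $N=\{0\}$ reduces the Morita context to a triangular matrix ring and forces the trace-ideal nilpotency hypotheses to hold trivially, after which Theorem~\ref{thm3.35} gives the equivalence at once.
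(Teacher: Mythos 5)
Your proposal is correct and coincides with the paper's own argument: the text immediately preceding Corollary~\ref{cor3.36} states that the corollary is obtained from Theorem~\ref{thm3.35} by setting $N=\{0\}$, which is exactly your specialization (with the trace ideals $MN=NM=\{0\}$ trivially nilpotent). Your reading of the notational clash ($A=R$, $B=S$, inherited from the theorem) is also the intended one.
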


\begin{corollary}\label{cor3.37}
Let $R$ be a ring and $n\geqslant 1$ is a natural number. Then, ${\rm T}_{n}(R)$ is GSNC if, and only if, $R$ is strongly nil-clean.
\end{corollary}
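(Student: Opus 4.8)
\textbf{Proof proposal for Corollary~\ref{cor3.37}.}

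The plan is to derive this as a direct specialization of Corollary~\ref{cor3.36}, since ${\rm T}_n(R)$ is built from triangular matrix rings via iterated applications of the ${\rm T}(R,S,M)$ construction. First I would handle the base case: the statement is trivial for $n=1$, and the case $n=2$ is exactly Corollary~\ref{cor3.36} with the choice $R=S$ and $M=R$ (viewed as the natural $(R,R)$-bimodule), which gives ${\rm T}_2(R)={\rm T}(R,R,R)$ is GSNC if and only if $R$ is strongly nil-clean. This settles the anchor of the induction.

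For the inductive step, I would observe that the upper triangular matrix ring decomposes as a formal triangular matrix ring in a convenient way, namely
\[
{\rm T}_n(R) \cong {\rm T}\bigl({\rm T}_{n-1}(R),\, R,\, M\bigr)
\]
for a suitable $({\rm T}_{n-1}(R),R)$-bimodule $M$ (taking $M$ to be the first $n-1$ entries of the last column). Applying Corollary~\ref{cor3.36} to this decomposition, ${\rm T}_n(R)$ is GSNC if and only if both ${\rm T}_{n-1}(R)$ and $R$ are strongly nil-clean. The subtlety here is that Corollary~\ref{cor3.36} characterizes the GSNC property of the triangular ring in terms of the strongly nil-clean property of its diagonal blocks, so I must reconcile the two notions: by Corollary~\ref{cor3.19}, a ring is strongly nil-clean exactly when it is simultaneously UU and GSNC. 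Thus I would argue that if ${\rm T}_{n-1}(R)$ is strongly nil-clean, then in particular it is GSNC, and then invoke the induction hypothesis to conclude $R$ is strongly nil-clean. Conversely, if $R$ is strongly nil-clean, the induction hypothesis gives that ${\rm T}_{n-1}(R)$ is GSNC, and one checks directly that it is also UU (a triangular matrix over a UU ring is UU, since invertibility reduces to invertibility of the diagonal), hence strongly nil-clean; Corollary~\ref{cor3.36} then yields that ${\rm T}_n(R)$ is GSNC.

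The main obstacle I anticipate is the bookkeeping needed to confirm that ${\rm T}_{n-1}(R)$ being GSNC together with the triangular structure actually forces it to be strongly nil-clean (rather than merely local with nil Jacobson radical, which is the other GSNC alternative from Proposition~\ref{pronew} for abelian rings, or the ${\rm M}_2(\mathbb{Z}_2)$-type case from Lemma~\ref{lem3.15}). I would dispose of these degenerate possibilities by noting that ${\rm T}_{n-1}(R)$ for $n-1\ge 2$ is never local (it has nontrivial idempotents on the diagonal) and is never a matrix ring of the type appearing in those lemmas, so the GSNC property for it must come packaged with the UU condition to survive the triangular structure. An alternative, cleaner route that avoids the induction entirely is to apply Proposition~\ref{pro3.7}(ii): since $J({\rm T}_n(R))$ consists of the strictly-upper-triangular part together with $J(R)$ on the diagonal, one has ${\rm T}_n(R)/J({\rm T}_n(R))\cong \prod_{i=1}^n \bigl(R/J(R)\bigr)$, and the strictly upper triangular entries form a nilpotent ideal; combining this reduction with the product characterization in Proposition~\ref{pro3.7}(iii) collapses the whole problem to the single-block strongly nil-clean condition on $R$. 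I would present the triangular-decomposition-plus-Corollary~\ref{cor3.36} argument as the primary proof since it reuses the machinery just established, and keep the $J$-radical reduction in reserve as the conceptual check.
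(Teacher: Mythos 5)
Your proposal follows essentially the route the paper intends: the paper offers no written proof of Corollary~\ref{cor3.37}, presenting it (together with Corollary~\ref{cor3.36}) as the specialization $N=\{0\}$ of Theorem~\ref{thm3.35}, and your induction via the decomposition ${\rm T}_n(R)\cong {\rm T}\bigl({\rm T}_{n-1}(R),R,M\bigr)$ is exactly the bookkeeping that this specialization requires. The inductive step is sound: in the forward direction Corollary~\ref{cor3.36} already hands you that $R$ (and ${\rm T}_{n-1}(R)$) is strongly nil-clean, and in the converse direction your upgrade of the induction hypothesis ``${\rm T}_{n-1}(R)$ is GSNC'' to ``${\rm T}_{n-1}(R)$ is strongly nil-clean'' via Corollary~\ref{cor3.19} is correct: $R$ strongly nil-clean gives $R$ UU (Proposition~\ref{pro3.17}); a unit of ${\rm T}_{n-1}(R)$ then has diagonal entries in $1+{\rm Nil}(R)$, and a triangular matrix whose diagonal entries are all nilpotent is itself nilpotent (its diagonal part $D$ satisfies $D^k=0$ for $k$ the maximum nilpotency index, and the strictly upper triangular matrices form a nilpotent ideal), so ${\rm T}_{n-1}(R)$ is UU. This is precisely the piece of work the paper leaves implicit (the paper would instead quote the known fact from \cite{23} that ${\rm T}_{n-1}(R)$ is strongly nil-clean exactly when $R$ is), and you do it correctly using only results already established in the paper. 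Your reserve $J$-radical argument also works for $n\ge 2$, with the caveat that Proposition~\ref{pro3.7}(iii) is only valid for products of at least two factors, and that lifting strong nil-cleanness through the nil radical needs the external result of \cite{23}.

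The one genuine flaw is the base case $n=1$, which you dismiss as ``trivial.'' It is not trivial --- it is false: ${\rm T}_1(R)=R$, and a GSNC ring need not be strongly nil-clean; $\mathbb{Z}_3$ (Example~\ref{exa3.23}(i)), or any division ring, is a counterexample. This is a defect in the paper's own statement (the hypothesis should read $n\geqslant 2$), but a careful proof attempt should detect it rather than wave it through: everything else in your argument genuinely presupposes $n\ge 2$, since both the decomposition ${\rm T}\bigl({\rm T}_{n-1}(R),R,M\bigr)$ and Proposition~\ref{pro3.7}(iii) require at least two diagonal blocks, and that requirement is not an artifact of the method but the exact point where the $n=1$ case breaks down.
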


Given now a ring $R$ and a central element $s$ of $R$, the $4$-tuple
$\begin{pmatrix}
	R& R \\
	R& R
\end{pmatrix}$
becomes a ring with addition defined componentwise and with multiplication defined by
$$
\begin{pmatrix}
	a_1& x_1 \\
	y_1& b_1
\end{pmatrix}
\begin{pmatrix}
	a_2& x_2 \\
	y_2& b_2
\end{pmatrix}=
\begin{pmatrix}
	a_1a_2 + sx_1y_2& a_1x_2 + x_1b_2 \\
	y_1a_2 + b_1y_2& sy_1x_2 + b_1b_2
\end{pmatrix}.
$$
This ring is denoted by ${\rm K}_s(R)$. A Morita context
$
\begin{pmatrix}
	A& M \\
	N& B
\end{pmatrix}
$
with $A = B = M = N = R$ is called a {\it generalized matrix ring} over $R$. It was observed in \cite{11} that a ring $S$ is a generalized matrix ring over $R$ if, and only if, $S = {\rm K}_s(R)$ for some $s \in {\rm Z}(R)$, the center of $R$. Here $MN = NM = sR$, so that $$MN \subseteq J(A) \Longleftrightarrow  s \in J(R), NM \subseteq J(B) \Longleftrightarrow  s \in  J(R),$$ and $MN, NM$ are nilpotent $\Longleftrightarrow  s$ is a nilpotent. Thus, Theorem \ref{thm3.35} has the following consequence, too.

\begin{corollary}\label{cor3.38}
Let $R$ be a ring and $s\in {\rm Z}(R) \cap {\rm Nil}(R)$. Then, ${\rm K}_{s}(R)$ is GSNC if, and only if, $R$ is strongly nil-clean.
\end{corollary}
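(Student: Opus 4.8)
The plan is to reduce the statement directly to Theorem~\ref{thm3.35}, since ${\rm K}_s(R)$ is by construction a Morita context and nothing new really has to be built. First I would recall, exactly as noted in the paragraph preceding the corollary, that ${\rm K}_s(R)$ is precisely the generalized matrix ring over $R$, i.e. the Morita context $\begin{pmatrix} A & M \\ N & B \end{pmatrix}$ with $A = B = M = N = R$ and with the two bilinear products $mn$ and $nm$ both governed by the central scalar $s$. Concretely, the trace ideals are $MN = NM = sR$.

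Next I would verify that the hypotheses of Theorem~\ref{thm3.35} are met, namely that $MN$ and $NM$ are nilpotent ideals of $A = R$ and of $B = R$, respectively. Since $s \in {\rm Z}(R)$, the set $sR$ is a genuine two-sided ideal of $R$, and since $s \in {\rm Nil}(R)$ there is some $k \in \mathbb{N}$ with $s^{k} = 0$; centrality of $s$ then yields $(sR)^{k} = s^{k}R = 0$, so $sR$ is a nilpotent ideal. Hence both trace ideals $MN = NM = sR$ are nilpotent, and Theorem~\ref{thm3.35} applies verbatim.

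Finally, invoking Theorem~\ref{thm3.35}, the ring ${\rm K}_s(R)$ is GSNC if, and only if, both $A$ and $B$ are strongly nil-clean. But $A = B = R$, so this last condition is exactly that $R$ itself be strongly nil-clean, which is the desired equivalence.

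I do not anticipate any real obstacle here: the argument is essentially a citation of Theorem~\ref{thm3.35}. The only point requiring a moment's care is the verification that $sR$ is a nilpotent ideal, and this is precisely where the hypothesis $s \in {\rm Z}(R) \cap {\rm Nil}(R)$ enters in both of its parts -- centrality is used to guarantee that $sR$ is a two-sided ideal and to collapse $(sR)^{k}$ to $s^{k}R$, while nilpotency of $s$ then annihilates it. Everything else is a direct application of the previously established Morita-context characterization.
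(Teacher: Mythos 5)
Your proposal is correct and follows exactly the route the paper intends: the paper's own justification is the remark preceding the corollary, observing that ${\rm K}_s(R)$ is the Morita context with $A=B=M=N=R$ and trace ideals $MN=NM=sR$, which are nilpotent precisely when $s$ is nilpotent, so that Theorem~\ref{thm3.35} applies directly. Your explicit verification that $(sR)^k = s^kR = 0$ via centrality of $s$ is the only detail the paper leaves implicit, and it is right.
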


Following Tang and Zhou (cf. \cite{12}), for $n\geq 2$ and for $s\in {\rm Z}(R)$, the $n\times n$ formal matrix ring over $R$ defined with the help of $s$, and denoted by ${\rm M}_{n}(R;s)$, is the set of all $n\times n$ matrices over $R$ with usual addition of matrices and with multiplication defined below:

\noindent For $(a_{ij})$ and $(b_{ij})$ in ${\rm M}_{n}(R;s)$, set
$$(a_{ij})(b_{ij})=(c_{ij}), \quad \text{where} ~~ (c_{ij})=\sum s^{\delta_{ikj}}a_{ik}b_{kj}.$$
Here, $\delta_{ijk}=1+\delta_{ik}-\delta_{ij}-\delta_{jk}$, where $\delta_{jk}$, $\delta_{ij}$, $\delta_{ik}$ are the standard {\it Kroncker delta} symbols.

\medskip

Thereby, we arrive at the following.

\begin{corollary}\label{cor3.39}
Let $R$ be a ring and $s\in {\rm Z}(R)$. If ${\rm M}_{n}(R;s)$ is GSNC, then $R$ is GSNC and $s\in J(R)$.
The converse holds provided $R$ is strongly nil-clean and $s\in {\rm Nil}(R)$.
\end{corollary}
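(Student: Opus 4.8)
The plan is to prove the two implications separately, running everything through the corner--ring machinery of Corollary~\ref{cor3.8}, the $\mathrm{M}_3$--obstruction of Theorem~\ref{thm3.9}, and the nil--ideal reduction of Proposition~\ref{pro3.7}. For the forward implication, I would first dispose of the easy half: the matrix unit $e_{11}=\mathrm{diag}(1,0,\dots,0)$ is an idempotent of ${\rm M}_{n}(R;s)$, and since $\delta_{111}=0$ the $s$--twist disappears on the $(1,1)$--corner, so an entrywise check gives $e_{11}{\rm M}_{n}(R;s)e_{11}\cong R$ with its ordinary multiplication. Hence Corollary~\ref{cor3.8} yields at once that $R$ is GSNC.

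The substantial part of the forward direction is $s\in J(R)$. Since $R$ is GSNC it is strongly $\pi$--regular by Corollary~\ref{cor3.6}; applying this to the central element $s$ produces a central idempotent $e\in R$ (a polynomial in $s$, hence central in $R$) such that $se$ is invertible in $eRe$ while $s(1-e)$ is nilpotent, with $e\neq 0$ precisely when $s\notin {\rm Nil}(R)$. Suppose for contradiction that $s\notin {\rm Nil}(R)$, so $e\neq 0$. Taking $E=e(e_{11}+e_{22}+e_{33})$ (this is where $n\ge 3$ is used), a direct computation using $\delta_{iij}=\delta_{ijj}=0$ shows that $E{\rm M}_{n}(R;s)E\cong {\rm M}_{3}(eRe;\,se)$. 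Because $se$ is a central unit of $eRe$, the classical rescaling isomorphism for formal matrix rings over a unit (\cite{12}) gives ${\rm M}_{3}(eRe;se)\cong {\rm M}_{3}(eRe)$ with $eRe\neq 0$; by Corollary~\ref{cor3.8} this corner would be GSNC, contradicting Theorem~\ref{thm3.9}. Therefore $s\in {\rm Nil}(R)$, and since $s$ is central, $1-rs$ is a unit for every $r\in R$, whence $s\in J(R)$.

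For the converse I would assume $R$ strongly nil-clean and $s\in {\rm Nil}(R)$, so that $J(R)={\rm Nil}(R)$ is nil, $\overline R:=R/J(R)$ is Boolean, and $\bar s=0$. The set $I:={\rm M}_{n}(J(R);s)$ of matrices with all entries in $J(R)$ is an ideal of ${\rm M}_{n}(R;s)$ with ${\rm M}_{n}(R;s)/I\cong {\rm M}_{n}(\overline R;0)$. In ${\rm M}_{n}(\overline R;0)$ the twist vanishes, so the multiplication reduces to $c_{ii}=a_{ii}b_{ii}$ and $c_{ij}=a_{ii}b_{ij}+a_{ij}b_{jj}$ for $i\neq j$; consequently the strictly off--diagonal matrices form a square--zero (hence nil) ideal $N_{0}$ with ${\rm M}_{n}(\overline R;0)/N_{0}\cong\prod_{i=1}^{n}\overline R$, which is GSNC by Proposition~\ref{pro3.7}(iii) since each factor is Boolean (hence strongly nil-clean). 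Two applications of Proposition~\ref{pro3.7}(i) then finish it: GSNC lifts from $\prod_{i=1}^n\overline R$ through the nil ideal $N_{0}$ to ${\rm M}_{n}(\overline R;0)$, and then through $I$ to ${\rm M}_{n}(R;s)$, provided $I$ is nil.

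The two genuinely non--formal points are: in the forward direction, producing the central Fitting idempotent $e$ for $s$ and verifying the corner identification $E{\rm M}_{n}(R;s)E\cong {\rm M}_{3}(eRe;se)$ together with the unit--rescaling isomorphism; and, in the converse, establishing that $I={\rm M}_{n}(J(R);s)$ is nil. I expect the latter to be the main obstacle, because $J(R)$ is only nil, not nilpotent, in general. The way out is that $\overline R$ is Boolean, hence commutative, so $J(R)$ is locally nilpotent; a single matrix over $J(R)$ (together with $s\in J(R)$) then has entries generating a nilpotent subring and is therefore nilpotent, which is exactly what makes Proposition~\ref{pro3.7}(i) applicable. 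I would also flag that the hypothesis $n\ge 3$ is essential for the conclusion $s\in J(R)$: the whole point of step two is the $\mathrm{M}_3$--obstruction, and indeed for $n=2$ one has ${\rm M}_{2}(R;s)={\rm K}_{s}(R)$, with ${\rm M}_{2}(\mathbb{Z}_{2})={\rm K}_{1}(\mathbb{Z}_{2})$ GSNC yet $s=1\notin J(\mathbb{Z}_{2})$, so the argument for that half legitimately cannot be pushed below $n=3$.
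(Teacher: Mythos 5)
Your \emph{forward} direction is essentially correct, and it is a genuinely different argument from the one the paper has in mind (the paper offers no explicit proof; implicitly Corollary~\ref{cor3.39} is meant to follow from Theorem~\ref{thm3.35} by viewing ${\rm M}_n(R;s)$ as an iterated Morita context whose trace ideals are generated by the central nilpotent $s$). Your corner computation giving $R$ GSNC is fine, and the Fitting-idempotent contradiction against Theorem~\ref{thm3.9} is a nice intrinsic way to force $s\in{\rm Nil}(R)\subseteq J(R)$. Two remarks: the spectral idempotent $e$ is \emph{not} a polynomial in $s$ in general; its centrality should instead be justified by the fact that the Drazin inverse of a strongly $\pi$-regular element lies in the double commutant of that element, which for central $s$ is ${\rm Z}(R)$. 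Also, your observation that the implication fails at $n=2$ (e.g.\ ${\rm M}_2(\mathbb{Z}_2;1)={\rm M}_2(\mathbb{Z}_2)$ is GSNC while $s=1\notin J(\mathbb{Z}_2)$) is a legitimate catch: the paper defines ${\rm M}_n(R;s)$ for all $n\ge 2$, so the corollary as stated needs $n\ge 3$ (minor quibble: in Tang--Zhou's convention ${\rm M}_2(R;s)={\rm K}_{s^2}(R)$, not ${\rm K}_s(R)$, but with $s=1$ your example is unaffected).

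The \emph{converse}, however, has a genuine gap. You reduce everything to the claim that $I={\rm M}_n(J(R);s)$ is nil, and you justify it by asserting that $J(R)$ is locally nilpotent ``because $R/J(R)$ is Boolean, hence commutative''. That inference is false: commutativity of the quotient says nothing about multiplication inside $J(R)$. Concretely, let $N$ be a Golod--Shafarevich nil $\mathbb{Z}_2$-algebra that is finitely generated but not nilpotent (hence not locally nilpotent), and let $R=\mathbb{Z}_2\oplus N$ be its unitalization; then $J(R)=N$ is nil and $R/J(R)\cong\mathbb{Z}_2$ is Boolean, so $R$ is strongly nil-clean, yet $J(R)$ is not locally nilpotent. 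Worse, the unrepaired claim---that matrix rings over a nil ideal are nil---is exactly Krempa's formulation of K\"othe's conjecture, which is open over countable fields such as $\mathbb{Z}_2$; so your argument cannot be completed along these lines. The fix is easy because you never needed all of $I$: since $s$ is central with $s^m=0$, the set $L=\{(a_{ij})\in{\rm M}_n(R;s) : a_{ii}\in sR\ \text{for all } i\}$ is an ideal satisfying $L^2\subseteq {\rm M}_n(sR;s)$ and ${\rm M}_n(sR;s)^m=0$, so $L$ is \emph{nilpotent}; moreover the diagonal-reduction map gives ${\rm M}_n(R;s)/L\cong\prod_{i=1}^{n}R/sR$, each factor being strongly nil-clean as a quotient of $R$. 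Then Proposition~\ref{pro3.7}(iii) followed by Proposition~\ref{pro3.7}(i) finishes the converse. (Alternatively, argue as the paper intends: decompose ${\rm M}_n(R;s)$ as a Morita context with corners $R$ and ${\rm M}_{n-1}(R;s)$, note the trace ideals are nilpotent since $s$ is, and induct using Theorem~\ref{thm3.35} together with \cite[Theorem 3.4]{23}.)
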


Let $R$ be a ring and $M$ a bi-module over $R$. The {\it trivial extension} of $R$ and $M$ is defined as
$$T(R, M) = \{(r, m) : r \in R \text{and} m \in M\},$$
with addition defined componentwise and multiplication defined by
$$(r, m)(s, n) = (rs, rn + ms).$$
Notice that the trivial extension ${\rm T}(R, M)$ is isomorphic to the subring
$$\left\{ \begin{pmatrix} r & m \\ 0 & r \end{pmatrix} : r \in R \text{ and } m \in M \right\}$$
consisting of the formal $2 \times 2$ matrix rings $\begin{pmatrix} R & M \\ 0 & R \end{pmatrix}$ and, in particular, the isomorphism ${\rm T}(R, R) \cong R[x]/\left\langle x^2 \right\rangle$ is fulfilled. We also note that the set of units of the trivial extension ${\rm T}(R, M)$ is $$U({\rm T}(R, M)) = {\rm T}(U(R), M).$$

A Morita context is referred to as {\it trivial} if the context products are trivial, meaning that $MN = \{0\}$ and $NM = \{0\}$ (see, for instance, \cite[p. 1993]{14}). In this case, we have the isomorphism
$$\begin{pmatrix} A & M \\ N & B \end{pmatrix} \cong {\rm T}(A \times B, M\oplus N),$$
where $\begin{pmatrix} A & M \\ N & B \end{pmatrix}$ represents a trivial Morita context, as stated in \cite{15}.

We, thus, come to the following symmetric relationship.

\begin{lemma}\label{lem3.40}
Let $R$ be a ring and $M$ a bi-module over $R$. Then,
$${\rm Nil}({\rm T}(R, M)) = {\rm T}({\rm Nil}(R), M).$$
\end{lemma}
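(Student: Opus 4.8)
The plan is to exploit the natural surjection $\pi:{\rm T}(R,M)\to R$ given by $\pi(r,m)=r$, together with the fact that the off-diagonal part of ${\rm T}(R,M)$ is a square-zero ideal. Concretely, I would set $I=\{(0,m):m\in M\}$ and first verify that $I$ is a two-sided ideal of ${\rm T}(R,M)$ with $I^{2}=\{0\}$. Using the multiplication $(r,m)(s,n)=(rs,rn+ms)$, one computes $(s,n)(0,m)=(0,sm)$ and $(0,m)(s,n)=(0,ms)$, so $I$ absorbs products on both sides, while $(0,m)(0,m')=(0,0)$ shows $I^{2}=\{0\}$. In particular $I$ is a nil ideal of ${\rm T}(R,M)$.

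Next I would observe that $\pi$ is a surjective ring homomorphism whose kernel is exactly $I$, so that ${\rm T}(R,M)/I\cong R$. The inclusion ${\rm Nil}({\rm T}(R,M))\subseteq {\rm T}({\rm Nil}(R),M)$ is then immediate: a ring homomorphism sends nilpotents to nilpotents, so if $(r,m)$ is nilpotent then $r=\pi(r,m)$ is nilpotent in $R$, which says precisely that $(r,m)\in {\rm T}({\rm Nil}(R),M)$.

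For the reverse inclusion I would use the standard fact that nilpotency can be detected modulo a nil ideal. If $(r,m)\in {\rm T}({\rm Nil}(R),M)$, then its image $r$ in ${\rm T}(R,M)/I\cong R$ is nilpotent, say $r^{k}=0$; hence $(r,m)^{k}\in I$, and since $I^{2}=\{0\}$ we conclude $(r,m)^{2k}=0$. Thus $(r,m)\in {\rm Nil}({\rm T}(R,M))$, and the two inclusions give the claimed equality.

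If one prefers a hands-on argument avoiding the quotient, I would instead prove by induction the power formula $(r,m)^{n}=\bigl(r^{n},\sum_{i+j=n-1}r^{i}m r^{j}\bigr)$, and then note that when $r^{k}=0$ the choice $n=2k$ kills both coordinates: $r^{2k}=0$, while every index pair with $i+j=2k-1$ satisfies $\max(i,j)\ge k$, so each summand $r^{i}m r^{j}$ vanishes. The only point that requires any care in either route is keeping the left and right $R$-actions on $M$ straight when expanding products; beyond this bookkeeping there is no genuine obstacle, and the square-zero observation makes the argument essentially immediate.
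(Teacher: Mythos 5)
Your proposal is correct. The paper itself gives no argument for this lemma---it simply declares the verification ``technically straightforward'' and leaves it to the reader---so your writeup supplies exactly the details the authors omitted. Both of your routes work: the quotient argument (the ideal $I=\{(0,m):m\in M\}$ is square-zero with ${\rm T}(R,M)/I\cong R$, so nilpotency lifts modulo $I$) and the direct computation via the power formula $(r,m)^{n}=\bigl(r^{n},\sum_{i+j=n-1}r^{i}mr^{j}\bigr)$ are each complete and correct; the second is the more self-contained of the two, and the index-counting observation that $i+j=2k-1$ forces $\max(i,j)\geq k$ is precisely the point that makes the reverse inclusion go through.
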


\begin{proof}
It is technically straightforward, so we drop off the full details leaving them to the interested reader for an inspection.
\end{proof}	
	
A good information gives also the following necessary and sufficient condition.

\begin{proposition}\label{pro3.41}
Let $R$ be a ring and $M$ a bi-module over $R$. Then, ${\rm T}(R, M)$ is GSNC if, and only if, $R$ is GSNC.
\end{proposition}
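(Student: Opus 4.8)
The plan is to lean on the elementwise characterization of Proposition~\ref{pro3.5} --- a ring $S$ is GSNC exactly when $s-s^2\in{\rm Nil}(S)$ for every non-unit $s\in S$ --- together with the description of nilpotents in Lemma~\ref{lem3.40} and the fact that $(r,m)\in{\rm T}(R,M)$ is invertible precisely when $r\in U(R)$. The computational heart of the argument is the identity, valid in any trivial extension, that $(r,m)^2=(r^2,rm+mr)$, so that
$$(r,m)-(r,m)^2=\big(r-r^2,\;m-rm-mr\big).$$
By Lemma~\ref{lem3.40} we have ${\rm Nil}({\rm T}(R,M))={\rm T}({\rm Nil}(R),M)$, so this difference is nilpotent if, and only if, its first coordinate $r-r^2$ lies in ${\rm Nil}(R)$; the second coordinate is an unrestricted element of $M$ and imposes no condition.

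For the forward implication I would assume ${\rm T}(R,M)$ is GSNC and take a non-unit $a\in R$. Then $(a,0)$ is a non-unit of ${\rm T}(R,M)$, so Proposition~\ref{pro3.5} gives $(a,0)-(a,0)^2=(a-a^2,0)\in{\rm Nil}({\rm T}(R,M))$, whence $a-a^2\in{\rm Nil}(R)$ and $R$ is GSNC. Conversely, assume $R$ is GSNC and take a non-unit $(a,m)\in{\rm T}(R,M)$; since units of the trivial extension are exactly the pairs with invertible first entry, $a\notin U(R)$, so $a-a^2\in{\rm Nil}(R)$ by Proposition~\ref{pro3.5}. The displayed identity then places $(a,m)-(a,m)^2$ in ${\rm Nil}({\rm T}(R,M))$, and a final appeal to Proposition~\ref{pro3.5} shows ${\rm T}(R,M)$ is GSNC.

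I do not expect a serious obstacle here; the only point needing care is that the off-diagonal entry $m-rm-mr$ never obstructs nilpotency, which is precisely what Lemma~\ref{lem3.40} guarantees. Even more economically, one may observe that $I=\{(0,m):m\in M\}$ is a square-zero --- hence nil --- two-sided ideal of ${\rm T}(R,M)$ with ${\rm T}(R,M)/I\cong R$, and then invoke Proposition~\ref{pro3.7}(i) to conclude at once that ${\rm T}(R,M)$ is GSNC if, and only if, $R$ is GSNC.
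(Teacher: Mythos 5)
Your proposal is correct and matches the paper's own proof, which in fact gives both of your arguments: its ``Method 2'' is exactly your elementwise computation via Proposition~\ref{pro3.5} (with $(a,0)$ for the forward direction and the unit criterion $U({\rm T}(R,M))={\rm T}(U(R),M)$ for the converse), and its ``Method 1'' is exactly your closing observation that $I=(0,M)$ is a nil ideal with ${\rm T}(R,M)/I\cong R$, settled by Proposition~\ref{pro3.7}(i). If anything, your write-up is slightly more careful than the paper's, since you justify via Lemma~\ref{lem3.40} why the $M$-coordinate never obstructs nilpotency, a step the paper leaves implicit.
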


\begin{proof}
{\bf Method 1:} Assuming $I = (0, M)$, then clearly $I$ is a nil-ideal of the ring ${\rm T}(R, M)$. Moreover, since the isomorphism $R \cong {\rm T}(R, M)/I$ is true, Proposition \ref{pro3.7}(i) employs to get the claim.

{\bf Method 2:} Let ${\rm T}(R, M)$ be a GSNC ring and $a \notin U(R)$. Then, one verifies that $(a,0) \notin U({\rm T}(R, M))$, so Proposition \ref{pro3.5} applies to detect that $(a,0)-(a,0)^2 \in {\rm Nil}({\rm T}(R, M))$, hence
$a-a^2 \in {\rm Nil}(R)$, as required.

Conversely, assuming $R$ is a GSNC ring and $(a, m) \notin U({\rm T}(R, M))$, we derive $a \notin U(R)$. Consequently, $a-a^2 \in {\rm Nil}(R)$, implying $$(a, m)-(a, m)^2 \in {\rm Nil}({\rm T}(R, M)),$$ as needed.
\end{proof}

The next criterion is also worthy of documentation.

\begin{corollary}\label{cor3.38}
Let $R=$$A~M \choose N~B$ be a trivial Morita context. Then, $R$ is GSNC if, and only if, both $A$ and $B$ are strongly nil-clean.
\end{corollary}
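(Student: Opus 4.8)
The plan is to avoid any direct manipulation of elements of the context ring and instead reduce the claim to three results already in hand, linked by the structural isomorphism recorded just before Lemma~\ref{lem3.40}. First I would recall that, since the context is \emph{trivial} (so that $MN = \{0\}$ and $NM = \{0\}$), the off-diagonal modules amalgamate into a single bi-module $M \oplus N$ over the product ring $A \times B$, giving the isomorphism
$$\begin{pmatrix} A & M \\ N & B \end{pmatrix} \cong {\rm T}(A \times B,\, M \oplus N).$$
This transfers the GSNC question about $R$ into a question about a trivial extension, where Proposition~\ref{pro3.41} becomes applicable.

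Next I would apply Proposition~\ref{pro3.41}, taking the base ring there to be $A \times B$ and the bi-module there to be $M \oplus N$. This yields that ${\rm T}(A \times B,\, M \oplus N)$ is GSNC if, and only if, $A \times B$ is GSNC. Combined with the isomorphism above, this already gives the intermediate equivalence that $R$ is GSNC if, and only if, the direct product $A \times B$ is GSNC.

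Finally I would invoke Proposition~\ref{pro3.7}(iii) in the case $n = 2$, $R_1 = A$, $R_2 = B$, which states that $A \times B$ is GSNC if, and only if, both $A$ and $B$ are strongly nil-clean. Chaining the two equivalences then delivers the statement. Because each step is a citation of a previously established result, there is no genuine computational obstacle here. The only point deserving a moment of care is confirming that the bi-module structure on $M \oplus N$ over $A \times B$ is exactly the one for which Proposition~\ref{pro3.41} was proved, i.e. that the triviality hypothesis $MN = NM = \{0\}$ is precisely what makes the Morita-context multiplication collapse into ordinary trivial-extension multiplication; this verification is routine and is already the content of the isomorphism quoted from \cite{15}.
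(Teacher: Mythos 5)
Your proposal is correct and follows exactly the paper's own route: the paper's proof likewise combines the isomorphism $\begin{pmatrix} A & M \\ N & B \end{pmatrix} \cong {\rm T}(A \times B,\, M \oplus N)$ (stated just before Lemma~\ref{lem3.40}) with Proposition~\ref{pro3.41} and Proposition~\ref{pro3.7}(iii). Your write-up merely makes explicit the chain of citations that the paper compresses into the phrase ``straightforward bearing in mind Propositions~\ref{pro3.41} and \ref{pro3.7}(iii).''
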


\begin{proof}
It is straightforward bearing in mind Propositions \ref{pro3.41} and \ref{pro3.7}(iii).
\end{proof}

Likewise, we can derive the following:

\begin{corollary}\label{cor3.43}
Let $R$ be a ring and $M$ a bi-module over $R$. Then, the following four statements are equivalent:
\begin{enumerate}
\item
$R$ is a GSNC ring.
\item
${\rm T}(R, M)$ is a GSNC ring.
\item
${\rm T}(R, R)$ is a GSNC ring.
\item
$\dfrac{R[x]}{\left\langle x^2 \right\rangle}$ is a GSNC ring.
\end{enumerate}
\end{corollary}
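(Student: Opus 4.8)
The plan is to reduce all four equivalences to the single necessary-and-sufficient condition already established in Proposition \ref{pro3.41}, which states that for any bi-module $M$ the trivial extension ${\rm T}(R,M)$ is GSNC precisely when $R$ is GSNC. Once that proposition is in hand, every implication in the corollary becomes either a direct instance of it or a transport of the GSNC property across a ring isomorphism. Since GSNC is a property defined purely in terms of units and strongly nil-clean elements, it is manifestly invariant under any ring isomorphism, and I would note this once at the outset so that it can be reused freely.

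Concretely, I would proceed as follows. First, the equivalence of $(1)$ and $(2)$ is nothing but Proposition \ref{pro3.41} stated verbatim. Second, for $(1)\Leftrightarrow(3)$ I would simply specialize Proposition \ref{pro3.41} to the case $M=R$, viewing $R$ as an $(R,R)$-bi-module over itself via left and right multiplication; this is a legitimate choice of bi-module, so the proposition applies and yields that ${\rm T}(R,R)$ is GSNC if and only if $R$ is GSNC. Third, for $(3)\Leftrightarrow(4)$ I would invoke the ring isomorphism ${\rm T}(R,R)\cong R[x]/\langle x^{2}\rangle$, which is recorded in the discussion preceding the statement; because GSNC is preserved under ring isomorphism, one of these rings is GSNC exactly when the other is. Chaining these three facts together gives $(1)\Leftrightarrow(2)$, $(1)\Leftrightarrow(3)$, and $(3)\Leftrightarrow(4)$, from which the full cycle of equivalences among $(1)$, $(2)$, $(3)$, $(4)$ follows at once.

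There is essentially no substantive obstacle here: the entire content has already been absorbed into Proposition \ref{pro3.41} and the stated isomorphism, so the corollary is a bookkeeping exercise. The only point requiring a word of care is the observation that being GSNC transfers across ring isomorphisms, together with the remark that $R$ genuinely qualifies as a bi-module over itself so that $(3)$ really is a special case of $(2)$; both are routine but should be spelled out to make the deduction airtight. I would therefore keep the proof to a few lines, citing Proposition \ref{pro3.41} for the two bi-module instances and the displayed isomorphism ${\rm T}(R,R)\cong R[x]/\langle x^{2}\rangle$ for the last link, and then conclude that the four conditions are equivalent.
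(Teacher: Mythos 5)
Your proposal is correct and matches the paper's (implicit) argument: the paper states this corollary without a written proof precisely because it follows from Proposition \ref{pro3.41} applied to a general bi-module $M$ and to $M=R$, together with the isomorphism ${\rm T}(R,R)\cong R[x]/\langle x^{2}\rangle$ recorded just before the statement. Your additional remarks on GSNC being invariant under ring isomorphism and on $R$ being a bi-module over itself are exactly the routine checks needed to make this airtight.
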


Now, consider $R$ to be a ring and $M$ to be a bi-module over $R$. Let $${\rm DT}(R,M) := \{ (a, m, b, n) | a, b \in R, m, n \in M \}$$ with addition defined componentwise and multiplication defined by $$(a_1, m_1, b_1, n_1)(a_2, m_2, b_2, n_2) = (a_1a_2, a_1m_2 + m_1a_2, a_1b_2 + b_1a_2, a_1n_2 + m_1b_2 + b_1m_2 +n_1a_2).$$ Then, ${\rm DT}(R,M)$ is a ring which is isomorphic to the ring ${\rm T}({\rm T}(R, M), {\rm T}(R, M))$. Also, we have $${\rm DT}(R, M) =
\left\{\begin{pmatrix}
	a &m &b &n\\
	0 &a &0 &b\\
	0 &0 &a &m\\
	0 &0 &0 &a
\end{pmatrix} |  a,b \in R, m,n \in M\right\}.$$ Besides, we also have the following isomorphism as rings: $\dfrac{R[x, y]}{\langle x^2, y^2\rangle} \rightarrow {\rm DT}(R, R)$ defined by $$a + bx + cy + dxy \mapsto
\begin{pmatrix}
	a &b &c &d\\
	0 &a &0 &c\\
	0 &0 &a &b\\
	0 &0 &0 &a
\end{pmatrix}.$$

We, thereby, detect the following.

\begin{corollary}\label{cor3.44}
Let $R$ be a ring and $M$ a bi-module over $R$. Then, the following statements are equivalent:
\begin{enumerate}
\item
$R$ is a GSNC ring.
\item
${\rm DT}(R, M)$ is a GSNC ring.
\item
${\rm DT}(R, R)$ is a GSNC ring.
\item
$\dfrac{R[x, y]}{\langle x^2, y^2\rangle}$ is a GSNC ring.
\end{enumerate}
\end{corollary}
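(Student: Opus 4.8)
The plan is to reduce the entire chain of equivalences to Proposition \ref{pro3.41} by exploiting the iterated trivial-extension structure of ${\rm DT}(R,M)$. Recall that the paragraph preceding the statement records the ring isomorphism ${\rm DT}(R,M) \cong {\rm T}({\rm T}(R,M),{\rm T}(R,M))$, in which the outer trivial extension is built from the ring ${\rm T}(R,M)$ together with ${\rm T}(R,M)$ regarded as a bi-module over itself. Since being GSNC is manifestly invariant under ring isomorphism, it suffices to analyze ${\rm T}({\rm T}(R,M),{\rm T}(R,M))$.

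First I would establish the equivalence (1) $\Leftrightarrow$ (2). Applying Proposition \ref{pro3.41} with base ring ${\rm T}(R,M)$ and bi-module ${\rm T}(R,M)$ shows that ${\rm T}({\rm T}(R,M),{\rm T}(R,M))$ is GSNC if, and only if, ${\rm T}(R,M)$ is GSNC. Applying Proposition \ref{pro3.41} a second time---now with base ring $R$ and bi-module $M$---shows that ${\rm T}(R,M)$ is GSNC if, and only if, $R$ is GSNC. Chaining these two biconditionals together with the displayed isomorphism yields that ${\rm DT}(R,M)$ is GSNC exactly when $R$ is GSNC, which is precisely (1) $\Leftrightarrow$ (2).

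The remaining equivalences are then immediate specializations. Taking $M=R$ in the previous paragraph delivers (1) $\Leftrightarrow$ (3). For (3) $\Leftrightarrow$ (4), I would simply invoke the ring isomorphism $\dfrac{R[x,y]}{\langle x^2,y^2\rangle}\cong {\rm DT}(R,R)$ recorded just before the statement, together with the fact that the GSNC property passes across ring isomorphisms. Assembling (1) $\Leftrightarrow$ (2), (1) $\Leftrightarrow$ (3), and (3) $\Leftrightarrow$ (4) closes the cycle of four equivalences.

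The only point demanding genuine care---and hence the main (mild) obstacle---is the double invocation of Proposition \ref{pro3.41}: one must be confident that the proposition holds for an \emph{arbitrary} ring paired with an \emph{arbitrary} bi-module, so that it may legitimately be applied with the composite ring ${\rm T}(R,M)$ in the base position and with ${\rm T}(R,M)$ viewed as the regular bi-module over itself. Since Proposition \ref{pro3.41} is established in exactly that generality (its Method~1 only uses that $(0,M)$ is a nil-ideal whose quotient recovers the base ring, via Proposition \ref{pro3.7}(i)), this presents no difficulty, and no fresh computation with the explicit $4\times 4$ matrix description of ${\rm DT}(R,M)$ is required.
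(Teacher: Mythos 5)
Your proposal is correct and follows exactly the route the paper intends: the paper records the isomorphisms ${\rm DT}(R,M)\cong {\rm T}({\rm T}(R,M),{\rm T}(R,M))$ and $R[x,y]/\langle x^2,y^2\rangle\cong {\rm DT}(R,R)$ immediately before the corollary precisely so that the result follows by applying Proposition~\ref{pro3.41} twice (once with base ring ${\rm T}(R,M)$ and once with base ring $R$) and then specializing $M=R$. Your added remark that Proposition~\ref{pro3.41} holds for an arbitrary ring with an arbitrary bi-module, so it may be applied with ${\rm T}(R,M)$ as the regular bi-module over itself, is the only point needing care, and you handle it correctly.
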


Let $\alpha$ be an endomorphism of $R$ and $n$ a positive integer. It was introduced by Nasr-Isfahani in \cite{30} the {\it skew triangular matrix ring} like this:

$${\rm T}_{n}(R,\alpha )=\left\{ \left. \begin{pmatrix}
	a_{0} & a_{1} & a_{2} & \cdots & a_{n-1} \\
	0 & a_{0} & a_{1} & \cdots & a_{n-2} \\
	0 & 0 & a_{0} & \cdots & a_{n-3} \\
	\ddots & \ddots & \ddots & \vdots & \ddots \\
	0 & 0 & 0 & \cdots & a_{0}
\end{pmatrix} \right| a_{i}\in R \right\}$$
with addition defined point-wise and multiplication given by:
\begin{align*}
	&\begin{pmatrix}
		a_{0} & a_{1} & a_{2} & \cdots & a_{n-1} \\
		0 & a_{0} & a_{1} & \cdots & a_{n-2} \\
		0 & 0 & a_{0} & \cdots & a_{n-3} \\
		\ddots & \ddots & \ddots & \vdots & \ddots \\
		0 & 0 & 0 & \cdots & a_{0}
	\end{pmatrix}\begin{pmatrix}
		b_{0} & b_{1} & b_{2} & \cdots & b_{n-1} \\
		0 & b_{0} & b_{1} & \cdots & b_{n-2} \\
		0 & 0 & b_{0} & \cdots & b_{n-3} \\
		\ddots & \ddots & \ddots & \vdots & \ddots \\
		0 & 0 & 0 & \cdots & b_{0}
	\end{pmatrix}  =\\
	& \begin{pmatrix}
		c_{0} & c_{1} & c_{2} & \cdots & c_{n-1} \\
		0 & c_{0} & c_{1} & \cdots & c_{n-2} \\
		0 & 0 & c_{0} & \cdots & c_{n-3} \\
		\ddots & \ddots & \ddots & \vdots & \ddots \\
		0 & 0 & 0 & \cdots & c_{0}
	\end{pmatrix},
\end{align*}
where $$c_{i}=a_{0}\alpha^{0}(b_{i})+a_{1}\alpha^{1}(b_{i-1})+\cdots +a_{i}\alpha^{i}(b_{i}),~~ 1\leq i\leq n-1
.$$ We denote the elements of ${\rm T}_{n}(R, \alpha)$ by $(a_{0},a_{1},\ldots , a_{n-1})$. If $\alpha $ is the identity endomorphism, then obviously ${\rm T}_{n}(R,\alpha )$ is a subring of upper triangular matrix ring ${\rm T}_{n}(R)$.

We now establish the validity of the following.

\begin{corollary}\label{cor3.45}
Let $R$ be a ring. Then, the following are equivalent:
\begin{enumerate}
\item
$R$ is a GSNC ring.
\item
${\rm T}_{n}(R,\alpha )$ is a GSNC ring.
\end{enumerate}
\end{corollary}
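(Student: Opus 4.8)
The plan is to realize ${\rm T}_{n}(R,\alpha)$ as an extension of $R$ by a nilpotent ideal, so that the desired equivalence drops out at once from Proposition \ref{pro3.7}(i). Concretely, I would set
$$I = \{(0, a_1, \ldots, a_{n-1}) : a_1, \ldots, a_{n-1} \in R\},$$
the set of skew triangular matrices with zero main diagonal, and show that $I$ is a nil two-sided ideal of ${\rm T}_{n}(R,\alpha)$ with ${\rm T}_{n}(R,\alpha)/I \cong R$. Granting this, Proposition \ref{pro3.7}(i) gives that ${\rm T}_{n}(R,\alpha)$ is GSNC if, and only if, ${\rm T}_{n}(R,\alpha)/I \cong R$ is GSNC, which is precisely the asserted equivalence, with both directions secured simultaneously and without any case analysis.

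First I would verify that the diagonal projection $\pi : {\rm T}_{n}(R,\alpha) \to R$, given by $(a_0, a_1, \ldots, a_{n-1}) \mapsto a_0$, is a surjective ring homomorphism. Since addition is componentwise and the defining multiplication produces $c_0 = a_0\,\alpha^{0}(b_0) = a_0 b_0$ — note that $\alpha^{0}$ is the identity, so the $\alpha$-twist never touches the diagonal coordinate — the map $\pi$ respects both operations and sends the identity $(1, 0, \ldots, 0)$ to $1$. Its kernel is exactly $I$, whence ${\rm T}_{n}(R,\alpha)/I \cong R$; in particular $I$ is a two-sided ideal, which one may also read off directly from the fact that $c_0 = a_0 b_0$ vanishes as soon as $a_0 = 0$ or $b_0 = 0$.

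It then remains to check that $I$ is nil, and here I would exploit the strictly upper triangular shape. An element of $I$ corresponds to a strictly upper triangular $n \times n$ matrix, the $d$-th superdiagonal carrying the entry $a_d$ with $a_0 = 0$. Because $\alpha$ is a ring endomorphism it fixes $0$, so applying the twist never manufactures a nonzero entry on a diagonal that was already zero; consequently the product of any $n$ elements of $I$ has its first $n$ superdiagonals all zero, forcing $I^{n} = 0$. Thus every element of $I$ is nilpotent and $I$ is a nil-ideal, completing the hypotheses of Proposition \ref{pro3.7}(i).

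The only point demanding genuine care — the main obstacle — is confirming that the $\alpha$-twist disturbs neither the homomorphism property of $\pi$ nor the nilpotency of $I$; both reduce to the two harmless observations that $\alpha^{0} = {\rm id}$ and $\alpha(0) = 0$. I would also note an alternative, more computational route mirroring Example \ref{exa3.26}: for $(a_0, \ldots, a_{n-1}) \notin U({\rm T}_{n}(R,\alpha))$ one has $a_0 \notin U(R)$ (the units of ${\rm T}_{n}(R,\alpha)$ being exactly those with invertible diagonal, since $I$ is nil), so $a_0 - a_0^{2} \in {\rm Nil}(R)$ with say $(a_0 - a_0^2)^{m} = 0$; as the difference $(a_0,\ldots) - (a_0,\ldots)^{2}$ is skew upper triangular with $a_0 - a_0^2$ on its diagonal, it follows that $\big((a_0, \ldots) - (a_0, \ldots)^{2}\big)^{nm} = 0$, and Proposition \ref{pro3.5} then finishes. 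The nil-ideal argument above is, however, cleaner and sidesteps the bookkeeping with the twisted product entirely.
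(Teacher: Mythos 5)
Your proof is correct and follows essentially the same route as the paper: both pass to the ideal $I$ of elements of ${\rm T}_{n}(R,\alpha)$ with zero diagonal, observe that $I^{n}=(0)$ and ${\rm T}_{n}(R,\alpha)/I\cong R$, and then invoke Proposition \ref{pro3.7}(i). Your write-up is in fact slightly more careful than the paper's (which displays $I$ as arbitrary strictly upper triangular matrices rather than elements of ${\rm T}_{n}(R,\alpha)$), since you verify explicitly that the $\alpha$-twist affects neither the projection onto the diagonal nor the nilpotency of $I$.
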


\begin{proof}
Choose
	$$I:=\left\{
	\left.
	\begin{pmatrix}
		0 & a_{12} & \ldots & a_{1n} \\
		0 & 0 & \ldots & a_{2n} \\
		\vdots & \vdots & \ddots & \vdots \\
		0 & 0 & \ldots & 0
	\end{pmatrix} \right| a_{ij}\in R \quad (i\leq j )
	\right\}.$$
Therefore, one easily finds that $I^{n}=(0)$ and $\dfrac{{\rm T}_{n}(R,\alpha )}{I} \cong R$. Consequently, one verifies that Proposition \ref{pro3.7} is applicable to get the pursued result.
\end{proof}

\section{GSNC group rings}

As usual, for an arbitrary ring $R$ and an arbitrary group $G$, the symbol $RG$ stands for the {\it group ring} of $G$ over $R$. Standardly, $\Delta(RG)$ designates the kernel of the classical augmentation map $RG\to R$.

We begin here with the following technicality.

\begin{lemma}\label{lem5.1}
Let $\phi: R \to S$ be a non-zero epimorphism of rings with ${\rm Ker}(\phi) \cap {\rm Id}(R) = \{0\}$. Then, $R$ is a GSNC ring if, and only if, $S$ is a GSNC ring and ${\rm Ker}(\phi)$ is a nil-ideal of $S$.
\end{lemma}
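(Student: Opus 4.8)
The plan is to set $K := \Ker(\phi)$ and invoke the first isomorphism theorem to identify $S \cong R/K$. Since $\phi$ is non-zero, $K$ is a \emph{proper} ideal of $R$, so $K$ contains no unit; in particular every element of $K$ is a non-invertible element of $R$. With this identification in place, the backward implication becomes essentially immediate: if $S \cong R/K$ is GSNC and $K$ is a nil-ideal of $R$, then Proposition \ref{pro3.7}(i) directly yields that $R$ is GSNC, with no further work needed.

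The substance of the argument therefore lies in the forward direction. Assuming $R$ is GSNC, I must establish two things: that $K$ is a nil-ideal and that $S$ is GSNC. However, once $K$ is known to be a nil-ideal, the claim that $S \cong R/K$ is GSNC again follows instantly from Proposition \ref{pro3.7}(i). Hence the single genuine obstacle is to prove that $K$ is nil, and this is precisely the step where the hypothesis $\Ker(\phi) \cap {\rm Id}(R) = \{0\}$ is used.

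To show $K$ is nil, I would take an arbitrary $x \in K$. Since $K$ is proper, $x \notin U(R)$, so the GSNC hypothesis (Definition \ref{def3.1}) supplies a strongly nil-clean decomposition $x = e + q$ with $e \in {\rm Id}(R)$, $q \in {\rm Nil}(R)$ and $eq = qe$. Applying $\phi$ and using $\phi(x) = 0$ gives $\phi(e) = -\phi(q)$. Here $\phi(e)$ is an idempotent of $S$ (being the image of an idempotent) while $-\phi(q)$ is nilpotent (being the image of a nilpotent), so $\phi(e)$ is simultaneously idempotent and nilpotent; from $\phi(e) = \phi(e)^2 = \cdots = \phi(e)^m = 0$ we conclude $\phi(e) = 0$, that is, $e \in K$. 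The hypothesis $K \cap {\rm Id}(R) = \{0\}$ now forces $e = 0$, whence $x = q \in {\rm Nil}(R)$. Thus every element of $K$ is nilpotent, which is exactly the assertion that the ideal $K$ is nil.

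Having established that $K$ is a nil-ideal in the forward direction, the remaining claim ($S$ is GSNC) follows from Proposition \ref{pro3.7}(i), completing the equivalence. The only delicate points to watch are the reduction of a member of $K$ to a non-unit so that the strongly nil-clean decomposition is available, and the elementary observation that a nilpotent idempotent must vanish; everything else is routine bookkeeping through the isomorphism $S \cong R/K$. It is worth noting that the hypothesis $\Ker(\phi)\cap{\rm Id}(R)=\{0\}$ is consumed entirely in forcing $e=0$, and plays no role in the backward implication.
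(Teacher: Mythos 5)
Your proposal is correct and follows essentially the same route as the paper's proof: take $x \in \Ker(\phi)$, note it is a non-unit, use the strongly nil-clean decomposition $x = e+q$ and the fact that $\phi(e)$ is an idempotent nilpotent to force $e \in \Ker(\phi)\cap{\rm Id}(R) = \{0\}$, and then handle both directions through $S \cong R/\Ker(\phi)$ and Proposition~\ref{pro3.7}(i). You even silently correct the paper's slip of calling $\Ker(\phi)$ an ideal of $S$ rather than of $R$, so nothing further is needed.
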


\begin{proof}
Suppose $R$ is a GSNC ring and $a\in {\rm Ker}(\phi)$. Thus, $a \notin U(R)$, so that there exist $e\in {\rm Id}(R)$ and $q\in {\rm Nil}(R)$ with $a = e + q$ and $eq = qe$. That is why, $0 = \phi(a) = \phi(e) + \phi(q)$, yielding
$\phi(e) \in {\rm Id}(S) \cap {\rm Nil}(S) = \{0\}$. This unambiguously shows that $e\in {\rm Id}(R) \cap {\rm Ker}(\phi) = \{0\}$, hence $a = q \in {\rm Nil}(R)$.
	
Next, since $\phi$ is an epimorphism, we have $S \cong R/{\rm Ker}(\phi)$ and, in conjunction with Proposition \ref{pro3.7}(i), we conclude that $R$ is a GSNC ring.

The converse relation can easily be extracted from Proposition \ref{pro3.7}(i).
\end{proof}

The following three lemmas are also useful for further applications.

\begin{lemma}\label{lem5.2}
Let $R$ be a ring and let $G$ be a group, where $\Delta(RG)\cap {\rm Id}(RG)=\{0\}$. Then, $RG$ is a GSNC ring if, and only if, $R$ is a GSNC ring and $\Delta(RG)$ is a nil-ideal of $RG$.
\end{lemma}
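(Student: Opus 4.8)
The plan is to recognize that this statement is an immediate specialization of Lemma~\ref{lem5.1}, obtained by taking the epimorphism to be the augmentation map. First I would recall that the augmentation map $\varepsilon:RG\to R$, given by $\varepsilon(\sum_{g\in G}a_g g)=\sum_{g\in G}a_g$, is a ring homomorphism whose kernel is precisely the augmentation ideal $\Delta(RG)$. Thus $\varepsilon$ fits the template $\phi:R'\to S$ of Lemma~\ref{lem5.1} with the roles played by $R'=RG$ and $S=R$.

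Next I would verify that $\varepsilon$ is a nonzero epimorphism, which is the only hypothesis of Lemma~\ref{lem5.1} that is not handed to us verbatim. This is routine: the natural embedding $R\hookrightarrow RG$ sending $r\mapsto r\cdot 1_G$ is a section of $\varepsilon$, since $\varepsilon(r\cdot 1_G)=r$, so $\varepsilon$ is surjective and in particular nonzero. The remaining hypothesis of Lemma~\ref{lem5.1}, namely ${\rm Ker}(\varepsilon)\cap {\rm Id}(RG)=\{0\}$, is exactly our standing assumption $\Delta(RG)\cap {\rm Id}(RG)=\{0\}$, because ${\rm Ker}(\varepsilon)=\Delta(RG)$.

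With both hypotheses of Lemma~\ref{lem5.1} in place, I would simply invoke it: it yields that $RG$ is a GSNC ring if, and only if, $R$ is a GSNC ring and $\Delta(RG)={\rm Ker}(\varepsilon)$ is a nil-ideal. This is precisely the asserted biconditional, so the proof concludes at once.

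In truth there is no genuine obstacle here; the content has been front-loaded into Lemma~\ref{lem5.1}, and the present lemma is a direct corollary. The only point requiring a word of care is the explicit identification ${\rm Ker}(\varepsilon)=\Delta(RG)$ together with the observation that $\varepsilon$ is onto, so that Lemma~\ref{lem5.1} genuinely applies; everything else is a translation of notation.
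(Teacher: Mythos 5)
Your proof is correct and is exactly the paper's own argument: the paper's proof of this lemma consists of the single observation that the augmentation map $\varepsilon: RG \to R$ is an epimorphism with ${\rm Ker}(\varepsilon) = \Delta(RG)$, after which Lemma~\ref{lem5.1} applies verbatim. Your write-up merely makes explicit the routine verifications (surjectivity via the section $r \mapsto r\cdot 1_G$, and the matching of hypotheses) that the paper leaves implicit.
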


\begin{proof}
There is an epimorphism $\varepsilon: RG \to R$ with ${\rm Ker}(\varepsilon)= \Delta(RG)$.
\end{proof}

\begin{lemma}\label{lem5.3} \cite[Lemma $2$]{24}.
Let $p$ be a prime with $p\in J(R)$. If $G$ is a locally finite $p$-group, then $\Delta(RG) \subseteq J(RG)$.
\end{lemma}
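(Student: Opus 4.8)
The plan is to reduce the assertion to a purely finite computation and then feed it into the ideal-theoretic description of the Jacobson radical. Since $\Delta(RG)$ is the kernel of the augmentation epimorphism $\varepsilon\colon RG\to R$, it is a two-sided ideal, and for any two-sided (indeed left) ideal $I$ one has $I\subseteq J(RG)$ precisely when $1+\alpha\in U(RG)$ for every $\alpha\in I$: if $1+\beta$ is a unit for all $\beta\in I$, then for $\alpha\in I$ and arbitrary $\gamma\in RG$ the element $-\gamma\alpha$ again lies in $I$, so $1-\gamma\alpha$ is a unit, and this is exactly the property placing $\alpha$ in $J(RG)$. Hence it suffices to show $1+\alpha\in U(RG)$ for each $\alpha\in\Delta(RG)$. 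Every such $\alpha$ has finite support, and because $G$ is locally finite this support generates a finite $p$-subgroup $H\le G$ with $\alpha\in\Delta(RH)$. Thus, once I know $\Delta(RH)\subseteq J(RH)$ for every finite $p$-group $H$, I obtain $1+\alpha\in U(RH)$; as $RH$ shares the identity of $RG$, the inverse computed in $RH$ lies in $RG$, so $1+\alpha\in U(RG)$, and the criterion above delivers $\Delta(RG)\subseteq J(RG)$.

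First I would isolate the finite case as the technical core: for a finite $p$-group $H$ with $p\in J(R)$, I claim $\Delta(RH)\subseteq J(RH)$. This rests on two facts. (a) The augmentation ideal of $\mathbb{Z}_{p}H$ (with $\mathbb{Z}_{p}=\Z/p\Z$) is nilpotent, say $\Delta(\mathbb{Z}_{p}H)^{N}=0$; reducing modulo $p$ identifies $\Delta(\Z H)$ with $\Delta(\mathbb{Z}_{p}H)$ up to the kernel $p\,\Z H$, so $\Delta(\Z H)^{N}\subseteq p\,\Z H$, and extending scalars to $R$ (using $RH=R\otimes_{\Z}\Z H$) gives $\Delta(RH)^{N}\subseteq pRH\subseteq J(R)\,RH$. (b) The ideal $J(R)\,RH$ lies in $J(RH)$: since $H$ is finite, $RH$ is $R$-free of rank $|H|$, and for $\gamma\in J(R)RH$ both left and right multiplication by $1+\gamma$ are represented by matrices of the form $I+M$ with $M\in {\rm M}_{|H|}(J(R))=J({\rm M}_{|H|}(R))$, hence invertible over $R$; therefore $1+\gamma\in U(RH)$, and the two-sided ideal criterion yields $J(R)RH\subseteq J(RH)$.

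Combining (a) and (b) gives $\Delta(RH)^{N}\subseteq J(RH)$, so the image of $\Delta(RH)$ in the semiprimitive quotient $RH/J(RH)$ is a nilpotent two-sided ideal, hence zero; this forces $\Delta(RH)\subseteq J(RH)$ and closes the finite case, after which the reduction of the first paragraph finishes the proof. The step I expect to be the main obstacle is fact (a): the uniform nilpotence of $\Delta(\mathbb{Z}_{p}H)$ is exactly where the hypotheses ``$p$ prime'' and ``$p$-group'' are consumed. I would establish it by induction on $|H|$ through a central subgroup $\langle z\rangle$ of order $p$, using that $(z-1)^{p}=z^{p}-1=0$ in $\mathbb{Z}_{p}\langle z\rangle$ so that the kernel of $\mathbb{Z}_{p}H\to\mathbb{Z}_{p}(H/\langle z\rangle)$ is nilpotent, and that $\Delta(\mathbb{Z}_{p}(H/\langle z\rangle))$ is nilpotent by the inductive hypothesis, whence $\Delta(\mathbb{Z}_{p}H)$ is nilpotent as an extension of nilpotent ideals. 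A secondary care point, already noted above, is only to confirm that the inverse produced inside the finite subring $RH$ is a genuine inverse in $RG$, which is immediate since the two rings share the same identity.
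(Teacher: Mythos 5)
Your proposal is correct, but there is nothing in the paper to compare it against: Lemma~5.3 is not proved there at all, it is quoted verbatim from the literature (Zhou, \cite[Lemma 2]{24}), so your write-up supplies exactly what the paper delegates to a citation. All the key steps check out. The reduction to finite subgroups is sound: since $\Delta(RG)$ is a two-sided ideal, the criterion ``a left ideal $I$ lies in $J(S)$ once $1+I\subseteq U(S)$'' applies; each $\alpha\in\Delta(RG)$ lies in $\Delta(RH)$ for the finite $p$-subgroup $H$ generated by its support (this is precisely where local finiteness enters); and units of $RH$ remain units of $RG$ because the two rings share the same identity. In the finite case, the induction through a central element $z$ of order $p$, using $(z-1)^p=z^p-1=0$ in characteristic $p$, is the standard proof that $\Delta(\mathbb{Z}_pH)$ is nilpotent; the transfer $\Delta(RH)^N\subseteq pRH\subseteq J(R)RH$ is legitimate because coefficients from $R$ commute with the elements of $H$ inside $RH$, so the $R$-coefficients can be pulled out of any product of elements $h-1$; and the matrix argument for $J(R)RH\subseteq J(RH)$ is valid for finite $H$: left multiplication by $1+\gamma$ acts on the free right $R$-module $RH$ by a matrix in $I+\mathrm{M}_{|H|}(J(R))=I+J\left(\mathrm{M}_{|H|}(R)\right)$, hence is bijective, and injectivity upgrades the resulting right inverse of $1+\gamma$ to a two-sided one. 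Passing to the semiprimitive quotient $RH/J(RH)$, where the nilpotent image of $\Delta(RH)$ must vanish, correctly closes the finite case. Whether or not this coincides step-by-step with Zhou's original argument, it is the classical line of reasoning for such results; what your version buys is that the paper's external dependence on \cite{24} could be replaced by a short, self-contained argument using nothing beyond basic facts about group rings, matrix rings over $J(R)$, and the unit characterization of the Jacobson radical.
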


\begin{lemma}\label{lem5.4}
Let $R$ be a ring and let $G$ be a locally finite $p$-group, where $p$ is a prime and $p\in J(R)$. Then, $RG$ is a GSNC ring if, and only if, $R$ is a GSNC ring and $\Delta(RG)$ is a nil-ideal of $RG$.
\end{lemma}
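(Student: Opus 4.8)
The plan is to prove Lemma~\ref{lem5.4} by reducing it to the already-established Lemma~\ref{lem5.2}. The key observation is that Lemma~\ref{lem5.2} gives exactly the desired biconditional under the hypothesis $\Delta(RG)\cap{\rm Id}(RG)=\{0\}$, so it suffices to verify that this hypothesis holds automatically in the present setting, where $G$ is a locally finite $p$-group and $p\in J(R)$. First I would invoke Lemma~\ref{lem5.3}, which under precisely these hypotheses yields the inclusion $\Delta(RG)\subseteq J(RG)$.

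Granting $\Delta(RG)\subseteq J(RG)$, the verification of the hypothesis of Lemma~\ref{lem5.2} becomes routine. Suppose $e\in\Delta(RG)\cap{\rm Id}(RG)$. Then $e$ is an idempotent lying inside the Jacobson radical $J(RG)$. Since $J(RG)$ contains no nonzero idempotents (if $e^2=e\in J(RG)$, then $1-e$ is a unit, and $e(1-e)=0$ forces $e=0$), we conclude $e=0$. Hence $\Delta(RG)\cap{\rm Id}(RG)=\{0\}$, and Lemma~\ref{lem5.2} applies verbatim to deliver the stated equivalence: $RG$ is GSNC if, and only if, $R$ is GSNC and $\Delta(RG)$ is a nil-ideal of $RG$.

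In short, the argument is a clean two-step chain: Lemma~\ref{lem5.3} supplies $\Delta(RG)\subseteq J(RG)$, and the standard fact that radicals contain no nontrivial idempotents upgrades this to the intersection condition needed to trigger Lemma~\ref{lem5.2}. The only point requiring any care is that this lemma is essentially a specialization of Lemma~\ref{lem5.2}, so I would make sure the statement of Lemma~\ref{lem5.4} is not merely a restatement but genuinely replaces the abstract hypothesis $\Delta(RG)\cap{\rm Id}(RG)=\{0\}$ with the concrete, checkable hypotheses on $G$ and $p$. I do not anticipate a serious obstacle here; the main subtlety is simply citing the correct prior results and noting that no nonzero idempotent can sit in the Jacobson radical. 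Thus the proof reduces to the single sentence that the hypotheses force $\Delta(RG)\cap{\rm Id}(RG)=\{0\}$ via Lemma~\ref{lem5.3}, after which Lemma~\ref{lem5.2} finishes the job.
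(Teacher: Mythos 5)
Your proposal is correct and is essentially the paper's own (implicit) argument: the paper states Lemma~\ref{lem5.4} as an immediate consequence of Lemma~\ref{lem5.3} (which gives $\Delta(RG)\subseteq J(RG)$) combined with Lemma~\ref{lem5.2}, exactly as you do. Your only addition is spelling out the standard fact that $J(RG)$ contains no nonzero idempotents, which is the right justification for the hypothesis $\Delta(RG)\cap{\rm Id}(RG)=\{0\}$ of Lemma~\ref{lem5.2}.
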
	

In regard to the last lemma, an important question which could be raised is to find, as in the case of GUNC rings, a suitable criterion when a group ring $RG$ of a locally finite $p$-group $G$ over an arbitrary ring $R$ to be a GSNC ring. In other words, is the restriction $p\in J(R)$ necessary in this claim and whether it could be deduced from the condition $RG$ is GSNC?

\section{Open Questions}

We close the work with the following challenging problems.

\medskip

A ring $R$ is said to be {\it weakly nil-clean} provided that, for any $a \in R$, there exists an idempotent $e \in R$ such that $a-e$ or $a+e$ is nilpotent. Additionally, a ring $R$ is said to be {\it strongly weakly nil-clean} provided $ae=ea$ or, equivalently, provided that, for any $a \in R$, at least one of the elements $a$ or $-a$ is strongly nil-clean (see, e.g., \cite{3, 31}).

We now can formulate the following.

\begin{problem}
Examine those rings whose non-invertible elements are (strongly) weakly nil-clean.
\end{problem}

A ring $R$ is called {\it strongly $2$-nil-clean} if every element in $R$ is the sum of two idempotents and a nilpotent that commute each other (see, for example, \cite{3}). These rings are a common generalization of the aforementioned strongly weakly nil-clean rings.

\medskip

Now, we may raise the following.

\begin{problem}
Examine those rings whose non-invertible elements are strongly $2$-nil-clean.
\end{problem}

\vskip3.0pc

\end{document}